\renewcommand{\@seccntformat}[1]{
  \ifcsname prefix@#1\endcsname
    \csname prefix@#1\endcsname
  \else
    \csname the#1\endcsname\quad
  \fi}
\numberwithin{equation}{section}
\newtheorem{theorem}{Theorem}[section]
\newtheorem{corollary}[theorem]{Corollary}
\newtheorem{lemma}[theorem]{Lemma}
\newtheorem{remark}[theorem]{Remark}
\newtheorem{claim}{Claim}
\newtheorem{proposition}[theorem]{Proposition}
\newcommand{\Ut}{e^{it\Delta}}
\newcommand{\fomega}{f^{\omega}}
\newcommand{\R}{\mathbb{R}}
\newcommand{\na}{\nabla}
\newcommand{\dl}{\delta}
\newcommand{\tht}{\theta}
\newcommand{\om}{\omega}
\newcommand{\Om}{\Omega}
\newcommand{\F}{\mathcal{F}}
\newcommand{\Utms}{e^{i(t-s)\Delta}}
\newcommand{\dd}{\partial}
\newcommand{\intr}{\int_{\R^d}}
\newcommand{\re}{\text{Re}}
\newcommand{\im}{\text{Im}}
\newcommand{\fv}{F+v}
\newcommand{\al}{\alpha}
\newcommand{\rd}{\R^d}
\newcommand{\C}{\mathbb{C}}
\newcommand{\WI}{\dot{W}(I)}
\newcommand{\ho}{\dot{H}^1(\R^d)}
\newcommand{\Z}{\mathbb{Z}}
\newcommand{\db}{\dot{B}}
\newcommand{\p}{\frac{d+2}{d-2}}
\newcommand{\pmo}{\frac{4}{d-2}}
\newcommand{\fuw}{u+w+F}
\newcommand{\Uto}{e^{i(t-t_0)\Delta}}
\newcommand{\ujmo}{u(t_{j-1})}
\newcommand{\dnorm}{{L^2_t L^{\frac{2d}{d+2}}_x}}
\newcommand{\pp}{\frac{2d}{d-2}}
\newcommand{\la}{\lambda}
\newcommand*{\medcap}{\mathbin{\scalebox{1.5}{\ensuremath{{\cap}}}}}
\newcommand{\vp}{\varphi}
\newcommand{\N}{\mathbb{N}}
\newcommand{\bkl}{b_{k,l}}
\newcommand{\cmkli}{\hat{c}^{M,i}_{k,l}}
\newcommand{\f}{\frac}
\newcommand{\vn}{v_n}
\newcommand{\fvn}{F_n+v_n}
\newcommand{\fn}{F_n}
\newcommand{\lt}{L^2}
\newcommand{\tjmo}{t_{j-1}}
\newcommand{\fvl}{F_l+v_l}
\newcommand{\vl}{v_l}
\newcommand{\fl}{F_l}
\newcommand{\uo}{u_1}
\newcommand{\ut}{u_2}
\newcommand{\vpi}{\varphi_i}
\newcommand{\vpj}{\varphi_j}
\newcommand{\tvpi}{\tilde{\varphi}_i}
\newcommand{\mo}{M_0}
\newcommand{\sqd}{\sqrt{d}}
\newcommand{\xw}{X_i(\omega)}
\title{Almost Sure Scattering of the Energy-Critical NLS in $d>6$.}
\author{Katie Marsden}
\date{}
\begin{document}
\raggedbottom

\begin{abstract}
    We study the energy-critical nonlinear Schr\"{o}dinger equation with randomised initial data in dimensions $d>6$. We prove that the Cauchy problem is almost surely globally well-posed with scattering for randomised super-critical initial data in $H^s(\R^d)$ whenever $s>\max\{\frac{4d-1}{3(2d-1)},\f{d^2+6d-4}{(2d-1)(d+2)}\}$. The randomisation is based on a decomposition of the data in physical space, frequency space and the angular variable. This extends previously known results in dimension 4 \cite{spitz2021almost}. The main difficulty in the generalisation to high dimensions is the non-smoothness of the nonlinearity. 
\end{abstract}
\maketitle

\tableofcontents

\section{Introduction}
We consider the defocusing energy-critical nonlinear Schr\"{o}dinger equation (NLS) in dimension $d>6$
\begin{equation}\label{NLS}
\begin{cases}
(i\partial_t+\Delta)u=u|u|^{\pmo}\\
u(0)=f\in H^s(\R^d) \text{, } 0<s<1
\end{cases}
\end{equation}

Here ``defocusing" refers to the plus sign in front of the nonlinearity and ``energy-critical" refers to the fact that the conserved energy
\begin{align}		\label{energy}
E(u(t))=\frac{1}{2}\intr |\na u(t)|^2dx+\frac{d-2}{2d}\intr|u(t)|^{\frac{2d}{d-2}} dx
\end{align}
is invariant with respect to the scaling symmetry
$$u(t,x)\mapsto u_\la (t,x):=\la^{\frac{d-2}{2}} u(\la^2t,\la x)$$
Since the energy scales like the $\dot{H}^1$ norm of $u$, we say the equation has scaling regularity 1.

It was shown in \cite{visan2007defocusing} that equation (\ref{NLS}) is globally well-posed with scattering for initial data in the energy space $\dot{H}^1$, however for $s<1$ this is not in general true. Indeed, Christ-Colliander-Tao showed in \cite{christ2003ill} that the solution operator is not continuous in $H^s$ at $u(0)=0$.

The goal of this paper is to investigate the global well-posedness of equation (\ref{NLS}) below the critical regularity $s=1$ by randomising the initial data. We show that for all $s\in(s_d,1)$, where $s_d$ is a constant depending only on the dimension, the equation is almost surely globally well-posed with respect to a particular randomisation for initial data in $H^s(\R^d)$. We moreover establish almost sure scattering in $\dot{H}^s(\R^d)$ both forwards and backwards in time. The randomisation is based on a decomposition of the initial data in physical space, Fourier space and the angular variable as in \cite{spitz2021almost}.

\begin{sloppypar}
The first almost sure scattering result for an energy-critical dispersive equation with super-critical data was proved by Dodson-L\"{u}hrmann-Mendelson in \cite{dodson2020almost} in the context of the 4D nonlinear wave equation with radial data. This was then extended to the 4D energy-critical nonlinear Schr\"{o}dinger equation with radial data by Killip-Murphy-Visan \cite{killip2019almost} with further improvements by Dodson-L\"{u}hrmann-Mendelson in \cite{dodson2019almost}. Spitz then generalised this to the case of non-radial data in \cite{spitz2021almost} by considering a more complex randomisation, and in this paper we further develop this result to cover dimensions $d>6$.
\end{sloppypar}

The main difficulty we encounter in moving to high dimensions is the non-smoothness of the nonlinearity $u|u|^{\pmo}$. To deal with this, we use an adapted version of the work of Tao-Visan \cite{tao2005stability} in Section \ref{scatteringsection} to study the stability of the energy-critical NLS which is needed to prove a conditional scattering result, since in high dimensions the nonlinearity is not twice differentiable and standard stability techniques are insufficient. We also prove local well-posedness in Section \ref{lwpsection} via a regularisation argument, allowing us to work with higher regularity solutions when proving this scattering condition is satisfied.

The many-fold randomisation procedure we consider in this paper was introduced by Spitz in \cite{spitz2021almost}, however each sub-randomisation had previously been used with success. In particular, the randomisation with respect to a unit-scale frequency decomposition, also known as the \textit{Wiener randomisation}, has been extensively applied to nonlinear Schr\"{o}dinger and wave equations, among others, since its simultaneous introduction by L\"{u}hrmann-Mendelson \cite{luhrmann2014random} and B\'{e}nyi-Oh-Pocovnicu \cite{benyi2015probabilistic,benyi2015wiener}, see also \cite{zhang2012random,pocovnicu2017almost}. Randomisation in the angular variable was introduced in \cite{burq2019randomization} in the context of a wave maps type equation, and randomisation in physical space has in particular had applications to the final state problem of the NLS and other dispersive equations, see for example \cite{nakanishi2018randomized,murphy2019random}. The randomisation we use also involves a dyadic frequency decomposition, however unlike its unit-scale counterpart, randomisation with respect to this decomposition alone has not proved useful since it does not entail any improved integrability.

\subsection{Main Result}
We now state our main result. We will define the randomisation of the initial data fully in the next section. Loosely speaking, for any function $f\in H^s(\R^d)$, its randomisation over a probability space $(\Omega,\mathcal{A},\mathbb{P})$ is an $H^s$-valued random variable $$\Omega\ni\omega\mapsto\fomega\in H^s(\R^d)$$
\begin{theorem}		\label{thm1}
Let $d>6$, $s_d:=\max\{\frac{4d-1}{3(2d-1)},\f{d^2+6d-4}{(2d-1)(d+2)}\}<s<1$. Let $f\in H^s(\R^d)$ and $\fomega$ denote the randomisation of $f$ (defined in Section \ref{randomisationsection}). Then there exists $\Sigma {\subset} \Omega$ with $\mathbb{P}(\Sigma)=1$ such that for every $\om \in \Sigma$ there exists a unique global solution 
$$u(t)\in {\Ut}\fomega+C(\R;H^1(\R^d))$$ to the defocusing energy-critical nonlinear Schr\"{o}dinger equation with initial data $\fomega$
\begin{equation} 		\label{rNLS}
\begin{cases}
(i\partial_t+\Delta)u=u|u|^{\pmo}\\
u(0)=\fomega 
\end{cases}
\end{equation}
Moreover, this solution scatters both forwards and backwards in time, i.e. there exist $u_+$, $u_-\in \dot{H}^s(\R^d)$ such that $$\lim_{t\rightarrow \pm\infty} \|u(t)-\Ut u_\pm\|_{\dot{H}^1(\R^d)}=0$$
\end{theorem}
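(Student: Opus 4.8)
The plan is to follow the three-step scheme for almost sure scattering developed by Killip--Murphy--Visan and Spitz, adapted to the non-smooth nonlinearity. Write $u=\Ut\fomega+v$, so that the rough part is the free evolution $F:=\Ut\fomega$ and the smooth remainder $v$ solves the forced equation
\[
i\partial_t v+\Delta v=N(F+v),\qquad v(0)=0,\qquad N(z):=|z|^{\pmo}z ,
\]
the aim being to produce a global solution $v\in C(\R;H^1(\R^d))$ carrying a finite global Strichartz norm; the scattering of $u$ will then be read off from Duhamel's formula for $v$. The range of $s$ in Theorem \ref{thm1} is exactly what makes the probabilistic inputs below strong enough.

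First I would record the probabilistic step: by the construction of $\fomega$ in Section \ref{randomisationsection} and Khintchine's inequality, there is a set of full probability on which $F$ lies, over all of $\R$, in a suitable family of subcritical space-time norms (Strichartz-type, possibly with fractional derivatives) exhibiting a genuine gain over what $f\in H^s$ alone provides; that gain is precisely what the combined physical/frequency/angular randomisation buys. On this set, the local well-posedness theory of Section \ref{lwpsection} --- proved by a regularisation argument, so that the solution $v$ may be taken as regular as required --- furnishes a maximal-lifespan solution $v\in C_tH^1$, with a blow-up criterion phrased through a critical space-time norm of $v$.

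Next would come the deterministic conditional scattering statement: if $\sup_{t\in I_{\max}}\|v(t)\|_{\dot H^1}\le E<\infty$ on the maximal interval, then $I_{\max}=\R$, $v$ has a finite global space-time norm, and $v(t)\to\Ut v_\pm$ in $\dot H^1$ for some $v_\pm\in\dot H^1$. Its proof would proceed by long-time perturbation theory: split a given time interval into finitely many pieces on which the probabilistic norms of $F$ are small, and on each piece compare $v$ to the genuine defocusing energy-critical flow with data $v(t_0)$, which by \cite{visan2007defocusing} is global and scattering with space-time bounds depending only on $E$, treating $N(F+v)-N(v)$ as a small forcing term. Since $N$ is only H\"older of exponent $\pmo<1$ when $d>6$ --- hence not $C^2$, indeed not $C^1$ --- this step requires the adapted Tao--Visan stability lemma of Section \ref{scatteringsection} in place of the classical one.

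The bulk of the work, and the main obstacle, is establishing the a priori bound $\sup_t\|v(t)\|_{\dot H^1}<\infty$ almost surely. Here I would run an energy-increment argument on the regularised solutions: since $v$ does not solve the exact NLS, $E(v(t))$ is not conserved, and a computation would give $\frac{d}{dt}E(v(t))$ as a finite sum of terms each carrying a factor of $F$, schematically $\im\int(N(F+v)-N(v))\,\overline{N(v)}\,dx$ and $\im\int\nabla(N(F+v)-N(v))\cdot\overline{\nabla v}\,dx$, together with commutators coming from the regularisation. Estimating each term by H\"older, Sobolev embedding and the fractional product and chain rules --- placing every $F$-factor in the probabilistic norms of the first step and every $v$-factor in energy-critical norms --- would yield a differential inequality of the form $\frac{d}{dt}E(v(t))\lesssim h(t)\,(1+E(v(t)))^{\theta}$ with $h\in L^1(\R)$ almost surely; combined with $E(v(0))=0$ and the finite-time bounds from the local theory, a bootstrap across the finitely many subintervals on which $\|h\|_{L^1}$ is small then closes the estimate and gives a finite global energy bound. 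The genuine difficulty, and the reason $d>6$ is harder than $d=4$, is that $N(F+v)-N(v)$ and its gradient can no longer be Taylor-expanded into clean trilinear pieces: one must instead work from pointwise bounds such as $|N(a+b)-N(a)|\lesssim|b|(|a|+|b|)^{\pmo}$ and their derivative analogues, which cost integrability and couple frequency scales, so that all layers of the randomisation and their improved estimates are needed at once to absorb the low powers of $F$. Finally, intersecting the three full-probability sets yields $\Sigma$ with $\mathbb{P}(\Sigma)=1$; on $\Sigma$ the energy bound activates the conditional scattering statement, giving global existence and scattering of $v$ and hence the conclusion of Theorem \ref{thm1} for $u$, while uniqueness in $\Ut\fomega+C(\R;H^1)$ is inherited from the local theory.
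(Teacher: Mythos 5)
Your proposal follows essentially the same architecture as the paper: writing $u=\Ut\fomega+v$, proving local well-posedness for the forced equation by regularisation with a $\dot W$-based blow-up/scattering criterion, establishing conditional scattering via an adapted Tao--Visan long-time perturbation theory against Visan's global solutions, closing a uniform energy bound by differentiating the energy of the regularised solutions and absorbing the $F$-factors into the randomisation-improved norms over finitely many subintervals, and finally intersecting the full-measure sets. One small correction: for $d>6$ the nonlinearity is still $C^1$ (its derivative is H\"older continuous of exponent $\frac{4}{d-2}$); it is the failure of $C^2$ (equivalently, of Lipschitz continuity of $g_z$) that forces the auxiliary $X,Y$-space argument, not a failure of $C^1$.
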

Observe that $s_d=\f{d^2+6d-4}{(2d-1)(d+2)}$ if and only if $d\leq10$.

\begin{remark}\normalfont
By a solution to equation (\ref{NLS}), we mean a solution to the Duhamel formulation of the equation $$u(t)=e^{it\Delta}f-i\int_0^t e^{i(t-s)\Delta}(u|u|^{\pmo})(s)ds$$ in an appropriate function space.
\end{remark}

\begin{remark}\normalfont
In Theorem \ref{thm1} uniqueness holds in the sense that upon writing the solution $u$ in the form 
\begin{equation}        \label{uform}
u(t)={\Ut}\fomega+v(t)
\end{equation}
with $v\in C(\R;H^1(\R^d)){\medcap}{W}(I)$, where the space ${W}(I)$ will be defined shortly (see Section \ref{function_spaces}), the function $v$ is unique.
\end{remark}

\begin{remark}\label{remark1}\normalfont
By writing a solution $u$ of (\ref{rNLS}) in the form (\ref{uform}) we find that $v$ must satisfy the forced equation
\begin{equation}	\label{rFNLS}
\begin{cases}
(i\partial_t+\Delta)v=(F+v)|F+v|^{\pmo}\\
v(0)=v_0
\end{cases}
\end{equation}
with $F=\Ut\fomega$ and $v_0=0$.
Thus, it is sufficient to study the well-posedness of (\ref{rFNLS}) in $H^1(\R^d)$ under some appropriate conditions on $F$.
\end{remark}

\bigskip
Before going into further details we briefly outline the structure of this paper. In Sections \ref{randomisationsection} and \ref{regularisationsection} we will introduce the randomisation procedure for $\fomega$ and the regularisation that we will use for the nonlinearity.

After discussing some preliminaries in Section \ref{notationsection} we will establish (deterministic) local well-posedness of (\ref{rFNLS}) in Section \ref{lwpsection}, under certain conditions on $F=\Ut \fomega$, via a regularisation argument in the space $\dot{W}$ with norm $$\|v\|_{\dot{W}(I)}:=\|\na v\|_{L^\f{2(d+2)}{d-2}_tL^\f{2d(d+2)}{d^2+4}_x(I\times\R^d)}$$
This is the norm used by Tao and Visan in \cite{tao2005stability} to study the perturbation theory of (\ref{NLS}), and it is a convenient norm to work with for the energy-critical equation. 
The argument will also require the forcing, $F$, to lie in $\dot{W}(I)$. Setting $F=\Ut\fomega$ this represents a gain in derivatives which we obtain via a randomisation-improved radially averaged Strichartz estimate as in \cite{spitz2021almost} (see Section \ref{probsection}).

We remark that $\dot{W}$ is not the optimal space to work in to establish local well-posedness of (\ref{rFNLS}). Indeed, the requirement that $\Ut\fomega$ also lies in $\dot{W}$ represents a gain of $$\frac{(d-1)(d-2)}{(2d-1)(d+2)}$$ derivatives on $\Ut\fomega$. However, when used at its endpoint the randomisation-improved radially averaged Strichartz estimate allows us to gain up to $\frac{d-1}{2d-1}$ derivatives and our method can be extended to obtain almost sure local well-posedness for $$1-\frac{d-1}{2d-1}=\frac{d}{2d-1}< s <1$$
We are not able to acheive twice this gain as in \cite{spitz2021almost} due to the non-algebraic nature of the nonlinearity which prevents a more precise analysis of the equation on dyadic scales.

\bigskip
In Section \ref{scatteringsection} we prove a conditional scattering result. The local well-posedness theory of Section \ref{lwpsection} is accompanied by a scattering criterion: if the solution to (\ref{rFNLS}) satisfies $$\|v\|_{\dot{W}(I^*)}<\infty$$ on its maximal interval of existence $I^*$ then the solution is global and scatters as $t\rightarrow \pm \infty$. In this section we show that this condition is satisfied provided the energy of $v$ is uniformly bounded on $I^*$. To this end we develop a perturbation theory in the space $\dot{W}$ to compare solutions of (\ref{rFNLS}) with those of the ``usual" NLS (\ref{rNLS}), since by \cite{visan2007defocusing} we already have a bound on those solutions in $\dot{W}$ in terms of their energy. Since for $d>6$ the nonlinearity is not twice differentiable, we cannot develop the perturbation theory in the standard way and instead adapt the work in \cite{tao2005stability} on the stability of high dimensional energy-critical Schr\"{o}dinger equations. The suitability of perturbation theory for studying probabilistic Cauchy theory of energy-critical NLS was first observed by Pocovnicu in \cite{pocovnicu2017almost}.

In this section we again work in the space $\dot{W}$ and again this is not optimal. Improving the result for this section would require further notation and not improve the final restriction on $s$ in Theorem \ref{thm1}, so we do not present the optimal case.
 
\bigskip
In Section \ref{energybound} we prove the uniform-in-time energy bound mentioned above, placing the forcing term in spaces with low time integrability as in \cite{spitz2021almost}. We prove this bound via the regularised solutions, since the true solution does not have sufficient regularity to perform the necessary computations (in particular, an explicit differentiation of the energy). 

\bigskip
Finally in Section \ref{bounds_section} we show that $F^\om:=\Ut \fomega$ indeed satisfies all the conditions needed to run the arguments above. This relies on the randomisation-improved radially averaged Strichartz estimate and an almost sure estimate on the gradient in $L^1_t$ Lebesgue spaces, both introduced in \cite{spitz2021almost}.

\subsection{Randomisation Procedure}\label{randomisationsection}		\label{randosection}
We now describe how to construct the random variable $\fomega$ appearing in the main theorem.
\subsubsection{Decomposition in Fourier space, physical space, and the angular variable.}
In what follows, let $f\in L^2(\R^d)$.

We first introduce the physical space decomposition. Let $\varphi:\R^d\rightarrow[0,1]$ be a smooth, radially symmetric function with $\vp(x)=1$ for $|x|\leq \sqrt{d}$ and $\varphi(x)=0$ for $|x|\geq 2\sqrt{d}$.\footnote{The factor of $\sqrt{d}$ is just to ensure that every point of $\R^d$ is in the support of some $\vp_i$.} For $i\in\mathbb{Z}^d$ define 
\begin{equation}
\varphi_i(x):=\frac{\varphi(x-i)}{\sum_{k\in \mathbb{Z}^d} \varphi(x-k)} \label{partition}
\end{equation}
so that $\varphi_i$ has support in $\{x: |x-i|\leq 2\sqrt{d}\}$. We then have the unit scale decomposition of $f$ in physical space,
$$f=\sum_{i\in\Z^d} \varphi_i(x)f(x)$$
Note that this representation holds in both the $L^2$ and the pointwise sense.

We next apply an angular decomposition to each component $\vp_i f$. This requires some set-up. Denote by $E_k$ the space of (surface) spherical harmonics of degree $k$, that is to say restrictions to the unit sphere $S^{d-1}$ of homogeneous harmonic polynomials of degree $k$. Denote $N_k=\text{dim}(E_k)<\infty$. The spaces $(E_k)_{k\geq 0}$ are mutually orthogonal and span $L^2(S^{d-1})$ so we may construct an orthonormal Hilbert basis $$B=(b_{k,l})_{k\in \N,l=1,...,N_k}$$ of $L^2(S^{d-1})$, with each $b_{k,l}\in E_k$. By Theorem 6 of \cite{burq2013injections} (see also Theorem 1 of \cite{burq2014probabilistic} and Theorem 1.1 of \cite{burq2019randomization}), there exists a choice of orthonormal basis $(b_{k,l})_{k,l}$, denoted a \textit{good basis}, such that for any $q\in[2,\infty)$, it holds
\begin{equation}\label{eqn1}
    \|b_{k,l}\|_{L^q(S^{d-1})}\leq C_{q,d}
\end{equation}
for some constant $C_{q,d}$ depending only on the indicated parameters and independent of $k,l$.

Recall also the following property of spherical harmonics (see e.g. \cite{stein2016introduction}): write $x\in\R^d$ as $x=r\tht$, for $r\in[0,\infty)$, $\tht\in S^{d-1}$. Then for any surface spherical harmonic $b_{k,l}\in E_k$ and any radial function $f_0$ such that $f(x)=f_0(r)b_{k,l}(\tht)\in L^2(\R^d)$, the Fourier transform of $f$ is given by $\hat{f}(x)=F_0(r)b_{k,l}(\tht)$, where\footnote{We use the convention $\hat{f}(\xi)=\int_{\R^d}e^{-ix\cdot\xi}f(x)dx$. In particular this is different to the convention in \cite{stein2016introduction}.}
\begin{equation}\label{ft}
F_0(r)=(2\pi)^{\frac{d}{2}}i^{-k}r^{-\frac{d-2}{2}}\int_0^\infty f_0(s)J_{\nu(k)}(rs)s^{\frac{d}{2}} ds
\end{equation}
Here $\nu(k):=\f{d+2k-2}{2}$ and $$J_\mu(r):=\frac{(r/2)^\mu}{\Gamma\left(\frac{2\mu+1}{2}\right)\Gamma\left(\frac{1}{2}\right)}\int_{-1}^{1}e^{irs}(1-s^2)^{\frac{2\mu-1}{2}}ds$$
for all $r>0$ and $\mu>-\frac{1}{2}$.

We are now ready to present the decomposition of $\vp_i f$ with respect to the angular variable. We first decompose $\vp_i f$ on dyadic scales in Fourier space: let $\phi:\R^d\rightarrow[0,1]$ be a smooth function supported in $\{|x|\leq 2\}$ and equal to $1$ on $\{|x|\leq 1\}$. For $M\in 2^{\mathbb{Z}}$ define 
\begin{align}
\phi_M(x):=\phi(x/M)-\phi(2x/M)	\label{physicalcutoff}
\end{align}
such that $\phi_M$ is supported in $\{\frac{M}{2}\leq |x|\leq 2M\}$.
For any $h\in L^2(\R^d)$ set $$P_M h:=\mathcal{F}^{-1}(\phi_M\cdot\hat{h})$$

Consider $P_M(\vp_i f)$. It is convenient to rescale $P_M(\vp_i f)$ to unit frequency by setting 
$$g^M_i=(P_M(\vp_i f))(M^{-1}\cdot)$$
Now fix a good basis $(b_{k,l})_{k\in\N,l=1,\ldots,N_k}$ and consider the decomposition of $\hat{g}^M_i$ with respect to this basis, i.e.
\begin{equation*}
    \hat{g}^M_i(\rho \tht)=\sum_{k=0}^\infty \sum_{l=1}^{N_k} \hat{c}^{M,i}_{k,l}(\rho)b_{k,l}(\tht)
\end{equation*}
with each $\hat{c}^{M,i}_{k,l}$ supported in $[\frac{1}{2},2]$ (by orthogonality). (\ref{ft}) then yields
\begin{equation*}
    g^M_i(r\tht)=(2\pi)^{-d}\mathcal{F}(\widehat{g}^M_i)(-r\tht)=\sum_{k=0}^\infty \sum_{l=1}^{N_k} a_k r^{-\f{d-2}{2}}\left(\int_0^\infty \cmkli(s)J_{\nu(k)}(rs)s^{\f{d}{2}}ds\right)b_{k,l}(\tht)
\end{equation*}
for $a_k=(2\pi)^{-\f{d}{2}}i^k$, using that $\bkl(-\tht)=(-1)^k\bkl(\tht)$. It is useful to observe at this point that 
\begin{equation}
\|g^M_i\|_{L^2(\R^d)}^2=\sum_{k=0}^\infty \sum_{l=1}^{N_k}\|\cmkli\|_{L^2(\rho^{d-1}d\rho)}^2\label{eqn_g}
\end{equation} 
Scaling this back to frequency $M$ we obtain
\begin{equation}\label{EQN1}
    P_M(\vp_if)(r\tht)=\sum_{k=0}^\infty \sum_{l=1}^{N_k} a_k (Mr)^{-\f{d-2}{2}}\left(\int_0^\infty \cmkli(s)J_{\nu(k)}(Mrs)s^{\f{d}{2}}ds\right)b_{k,l}(\tht)
\end{equation}

The final step is to include a unit-scale frequency decomposition. To this end we introduce the operators 
\begin{align}		\label{unitscaleop}
P_j f:=\F^{-1}(\psi_j(\xi)\hat{f}(\xi))
\end{align}
where $\psi_j(\xi):=\vp_j(\xi)$ is as in the physical space decomposition. We make this change of notation in order to clarify the distinction between the decompositions in physical and frequency space. Incorporating these projections into (\ref{EQN1}) we obtain 
\begin{equation*}
    P_M(\vp_if)(r\tht)=\sum_{j\in\Z^d}\sum_{k=0}^\infty \sum_{l=1}^{N_k} a_k M^{-\f{d-2}{2}}P_j\left[r^{-\f{d-2}{2}}\left(\int_0^\infty \cmkli(s)J_{\nu(k)}(Mrs)s^{\f{d}{2}}ds\right)b_{k,l}(\tht)\right]
\end{equation*}
from which
\begin{equation*}
    f(r\tht)=\sum_{M\in 2^{\Z}}\sum_{i,j\in\Z^d}\sum_{k=0}^\infty \sum_{l=1}^{N_k} a_k M^{-\f{d-2}{2}}P_j\left[r^{-\f{d-2}{2}}\left(\int_0^\infty \cmkli(s)J_{\nu(k)}(Mrs)s^{\f{d}{2}}ds\right)b_{k,l}(\tht)\right]
\end{equation*}
with convergence in $L^2(\R^d)$.

\subsubsection{Randomisation with respect to the decomposition.}
\begin{sloppypar}
We now introduce a family $$(X^M_{i,j,k,l}:M\in 2^\Z,i,j\in \Z^d, k\in \N_0,l\in\{1,\ldots,N_k\})$$ of independent, mean-zero, real-valued random variables on a probability space $(\Omega, \mathcal{A}, \mathbb{P})$ with respective distributions $(\mu^M_{i,j,k,l}:M\in 2^\Z,i,j\in \Z^d, k\in \N_0,l\in\{1,\ldots,N_k\})$ for which there exists a $c>0$ such that 
$$\int_\R e^{\gamma x} d\mu^M_{i,j,k,l}(x)\leq e^{c\gamma^2}$$      
for all $\gamma\in \R$, $M\in 2^{\mathbb{Z}}$, $i,j\in \Z^d$, $k\in \N_0$, $l\in\{1,\ldots,N_k\}$. This is satisfied by independent identically distributed Gaussians for example. We can then define the randomisation
\end{sloppypar}
\vspace{-0.6em}
\begin{multline}\label{rand_f}
\fomega=\sum_{M\in 2^\Z}\sum_{i,j\in \Z^d}\sum_{k=0}^\infty\sum_{l=1}^{N_k} X^M_{i,j,k,l}(\om) P_j f^{M,i}_{k,l}\\
\equiv\sum_{M,i,j,k,l} X^M_{i,j,k,l}(\om)a_k M^{-\f{d-2}{2}}P_j\left[r^{-\f{d-2}{2}}\left(\int_0^\infty \cmkli(s)J_{\nu(k)}(Mrs)s^{\f{d}{2}}ds\right)b_{k,l}(\tht)\right]
\end{multline}
which is well-defined in $L^2(\Omega,L^2(\R^d))$.

\begin{remark}\label{rmk1.4} \normalfont
In fact, for $f\in H^s(\R^d)$, the randomisation $\fomega$ also lies in $H^s(\R^d)$ almost surely. In particular, it holds $$\| \|\fomega\|_{H^s(\R^d)}\|_{L^2(\Omega)}\lesssim_d \|f\|_{H^s(\R^d)}$$
This can be seen using the fundamental large deviation estimate of Burq and Tvetkov (see Section \ref{bounds_section}), combined with the orthogonality of the decompositions in frequency space and into spherical harmonics, and Corollary 3.3 of \cite{spitz2021almost} to handle the intertwining of the physical space decomposition and the $H^s$ norm.
In what follows, we implicitly restrict to a subset $\Sigma \subset \Omega$ of probability one such that $\fomega \in H^s(\R^d)$ for every $\omega\in \Sigma$.
\end{remark}

\begin{remark}\label{rmk1.5} \normalfont
It is important to note that the above randomisation does not in general improve the regularity of the data. In particular, choose the probability space $(\Om,\mathcal{A},\mathbb{P})$ to be the product of spaces $(\Om_i,\mathcal{A}_i,\mathbb{P}_i)_{i=1,2,3}$ and the random variables to be given by
\begin{align*}
X^{M}_{i,j,k,l}(\om)=X_j(\om_1)X^M_{k,l}(\om_2)X_i(\om_3), && \om=(\om_1,\om_2,\om_3)\in\Om_1\times\Om_2\times\Om_3
\end{align*}
with the $X_j$, $X^M_{k,l}$, $X_i$ independent identically distributed Bernoulli random variables on $\Om_1$, $\Om_2$, $\Om_3$ respectively taking values $\pm1$ with equal probability $\frac{1}{2}$. Then one can show that, for $0<s<1$, $f\notin H^s(\rd)$ implies that $\fomega\notin H^s(\rd)$ for almost every $\om\in\Om$. See Appendix \ref{appendixB} for further details.

\end{remark}

\subsection{Regularisation of the Nonlinearity}\label{regularisationsection}
We shall study solutions to (\ref{NLS}) via a regularisation of the nonlinearity $g(u):=u|u|^{\f{4}{d-2}}$, allowing us to work with $H^2$ solutions when performing calculations involving the energy later on. This step is not necessary in the lower $4$ dimensional settings of \cite{dodson2019almost} and \cite{spitz2021almost} when the nonlinearity in (\ref{NLS}) is algebraic and persistence of regularity allows us to directly construct a solution in $H^1$ as a limit of solutions in $H^2$.

Denote $p=\f{d+2}{d-2}$, so $p-1=\f{4}{d-2}$. For each $n\in\N$ define $$g_n(u):=u\vp_n'(|u|^2)$$ for $\vp_n(x)=n^{p+1}\vp_1(x/n^2)$. Here $\vp_1\in C^\infty((0,\infty))\cap C([0,\infty))$ with $\vp_1(0)=0$ and
\begin{equation}
\vp_1'(x)=
\begin{cases}
x^{\f{p-1}{2}} \text{ for } 0< x\leq 1\\
2^{p-1} \text{ for } x\geq 4
\end{cases}
\end{equation}
in such a way that $\vp_1'(x)\leq x^\f{p-1}{2}$ for all $x\geq 0$.
Thus
\begin{equation}
\vp_n'(x)=
\begin{cases}
x^{\f{p-1}{2}} \text{ for } 0< x\leq n^2\\
(2n)^{p-1} \text{ for } x\geq (2n)^2
\end{cases}
\end{equation}
and $g_n (u)=g(u)$ whenever $|u|\leq n$. Since $\varphi_1''$ is compactly supported, we also see that $|\varphi_n''(x)|\lesssim |x|^{\f{p-3}{2}}$.

Consider the regularised NLS
\begin{equation}\label{reg_nls}
\begin{cases}
(i\dd_t+\Delta)u_n=g_n(u_n)\\
u_n(0)=u_{n,0}\in H^2(\R^d)
\end{cases}
\end{equation}
By Theorem 4.8.1 of \cite{cazenave2003semilinear} we see that (\ref{reg_nls}) admits a local solution in $C(I,H^2(\R^d))\cap C^1(I,L^2(\R^d))$ on some neighbourhood $I$ of $0$. Here
\begin{equation}\label{C1_space}
C^1(I,L^2(\R^d)):=\{f\in C(I,L^2(\R^d)):\dd_tf\in C(I,L^2(\R^d)) \}
\end{equation}
where $\dd_t f$ is defined as the vector-valued distribution such that $$\int_I \dd_t\psi(t) f(t,\cdot) dt=-\int_I \psi(t) \dd_tf(t,\cdot) dt$$ for all $\psi\in \mathcal{D}(I)$, with the above integrals evaluated in the Bochner sense.

Theorem 5.3.1 of \cite{cazenave2003semilinear} then shows that this solution exists in $H^2$ for as long as it exists in $H^1$, which is for all time since solutions of (\ref{reg_nls}) have conserved energy
$$
E_n(u_n)=\frac{1}{2}\int_{\rd}|\na u_n|^2dx+\frac{1}{2}\int_{\rd}\varphi_n(|u_n|^2)dx
$$
We thus see that (\ref{reg_nls}) admits global solutions in $C(\R,H^2(\R^d))\cap C^1(\R,L^2(\R^d))$.

\bigskip

As discussed in Remark \ref{remark1}, in this paper we will actually study the well-posedness of the forced equation (\ref{rFNLS}) in $H^1(\R^d)$, with the forcing term given by the free evolution of the randomised data: $F=\Ut\fomega$. Thus to obtain $H^2$ solutions to (\ref{rFNLS}), we must also regularise the forcing. 

\begin{sloppypar}
Set $F_n=P_{\leq n} F=\Ut P_{\leq n} \fomega$, where $$P_{\leq n}\fomega:=\sum_{\substack{M\in 2^{\mathbb{Z}}\\M\leq 2^n}}P_M \fomega$$ Then by Lemma 4.8.2 of \cite{cazenave2003semilinear}, $F_n\in C(\R,H^2(\R^d))\cap C^1(\R,L^2(\R^d))$. Observe that for any $1\leq a,b\leq\infty$ it holds $$\|F_n\|_{L^a_tL^b_x(\R)}\lesssim_{a,b,d}\|F\|_{L^a_tL^b_x(\R)}$$
\end{sloppypar}
Fix $v_0\in H^1(\R^d)$. Setting $u_n:=v_n+F_n$ and $u_{n,0}=P_{\leq n}(v_0+\fomega)$ in (\ref{reg_nls}), we thus obtain unique global solutions to the forced NLS
\begin{equation*}
\begin{cases}
(i\dd_t+\Delta)v_n=g_n(F_n+v_n)\\
v_n(0)=v_{n,0}\in H^2(\R^d)
\end{cases}
\end{equation*}
in $C(\R,H^2(\R^d))\cap C^1(\R,L^2(\R^d))$. Here $v_{n,0}:=P_{\leq n}v_0\rightarrow v_0$ in $H^1(\R^d)$.

We will show in Section \ref{lwpsection} that the solutions $\vn$ converge locally to solutions of the non-regularised equation (\ref{rFNLS}).

\section{Notation and Preliminaries}\label{notationsection}
\subsection{Notation}
$C_{\alpha_1,...,\alpha_n}$ denotes a constant depending only on the parameters $\al_1,...,\al_n$ whose precise value may change line to line.
We write $X\lesssim_{\alpha_1,...,\alpha_n}Y$ to mean $X\leq C_{\alpha_1,...,\alpha_n}Y$.

Unless stated otherwise, for $p\in[1,\infty]$, $p'\in[1,\infty]$ denotes its conjugate exponent such that $$\f{1}{p}+\f{1}{p'}=1$$

We use the spacetime Lebesgue spaces $L^p_t L^q_x$ with norms $$\|f\|_{L^p_tL^q_x(I\times\R^d)}\equiv\|f\|_{L^p_tL^q_x(I)}\equiv\|f\|_{p,q[I]}:=\| \|f\|_{L^q_x(\R^d)} \|_{L^p_t(I)}$$
as well as the homogeneous $\ell^2$ Besov spaces with
\begin{equation*}
    \|f\|_{\dot{B}^r_{q,2}(I)}:=\left(\sum_{N\in 2^{\mathbb{Z}}} N^{2r}\|P_Nf\|_{L^q_x(I\times \R^d)}^2\right)^{\frac{1}{2}}  
\end{equation*}
and the mixed spacetime Besov spaces with
\begin{equation*}
    \|f\|_{\dot{B}^r_{p,q,2}(I)}:=\left(\sum_{N\in 2^{\mathbb{Z}}} N^{2r}\|P_Nf\|_{L^p_tL^q_x(I\times \R^d)}^2\right)^{\frac{1}{2}}  
\end{equation*}
Since we shall always be considering the $\ell^2$ Besov-type spaces we will sometimes omit the subscript ``$2$", writing only $\db^r_{p,q}(I)$.

Throughout this paper it is always assumed that $d>6$, and we will often use the notation $p=\f{d+2}{d-2}\in(1,2)$ without comment.

\subsection{Properties of the Nonlinearity}
Denote $g(u):=u|u|^{\pmo}$. We record here some properties of $g$ for future reference. As well as the trivial bound $|g(u)|\leq |u|^{\p}$, we have the gradient bounds $|g_z(u)|\lesssim_d |u|^{\pmo}$, $|g_{\bar{z}}(u)|\lesssim_d |u|^{\pmo}$. Here $g_z,g_{\bar{z}}$ denote the complex derivatives:
\begin{align*}
g_z(x+iy)\equiv \dd_z g(x+iy)=\frac{1}{2}(\f{\dd g}{\dd x}-i\f{\dd g}{\dd y}), && g_{\bar{z}}(x+iy)=\frac{1}{2}(\f{\dd g}{\dd x}+i\f{\dd g}{\dd y})
\end{align*}
for $z=x+iy$, $x,y\in\R$.
We also have the difference bound
\begin{equation}\label{eqn123}
    |g(u_1)-g(u_2)|\lesssim_d (|u_1|^{\pmo}+|u_2|^{\pmo})|u_1-u_2|
\end{equation}
which follows from the identity
\begin{equation}
g(u_1+u_2)-g(u_1)=\int_0^1 [g_z(u_1+\theta u_2) u_2 +g_{\bar{z}}(u_1+\theta u_2)\bar{u}_2] d\theta		\label{gdiff}
\end{equation}
On the other hand, the chain rule
\begin{equation}    \label{chain_rule}
    \na g(u(x))=g_z(u(x))\na u(x)+g_{\bar{z}}(u(x))\na\bar{u}(x)
\end{equation}
with the bound\footnote{Note this bound holds since $d>6$. For $d\leq6$ we have Lipschitz continuity of $g_z$, $g_{\bar{z}}$, making some aspects of the problem simpler to study.}
\begin{equation}\label{eqn556}
    |g_z(u_1)-g_z(u_2)|\lesssim_d |u_1-u_2|^{\pmo}
\end{equation}
(and the analogous statement for $g_{\bar{z}}$), implies that
\begin{equation} 	\label{gderiv}
|\na g(u_1)-\na g(u_2)|\lesssim_d |u_1-u_2|^{\pmo}|\na u_1|+|u_2|^{\pmo}|\na u_1-\na u_2|
\end{equation}

Moreover, the above bounds all also hold for $g_n $ with bounds independent of $n$.

\subsection{Deterministic Estimates}
We first recall the Littlewood-Paley inequality. 
\begin{lemma}(Littlewood-Paley Inequality)		\label{LPIQ}
For any $1<p<\infty$ we have
\begin{equation*}
\|f\|_{L^p(\R^d)}\sim_{p,d} \left\| \left(\sum_{N\in 2^{\mathbb{Z}}} |P_Nf|^2\right)^\frac{1}{2}\right\|_{L^p(\R^d)}
\end{equation*}
\end{lemma}
This lemma allows us to easily transfer between the Besov and standard Lebesgue spaces. In particular, combined with the triangle inequality it yields
\begin{equation}\label{tiq}
\|f\|_{L^p_t L^q_x}\lesssim_{q,d} \left(\sum_{N\in 2^{\Z}}\|P_N f\|_{L^p_t L^q_x}^2\right)^\frac{1}{2}
\end{equation}
for any $2\leq p\leq \infty$, $2\leq q<\infty$. We also have the dual estimate
\begin{equation}\label{rtiq}
\left(\sum_{N\in 2^{\Z}}\|P_N f\|_{L^p_t L^q_x}^2\right)^\frac{1}{2}\lesssim_{q,d} \|f\|_{L^p_t L^q_x}
\end{equation}
for $1\leq p\leq 2$, $1<q\leq2$.

In addition, we will repeatedly use the property $$\||\na|^s P_N f\|_{L^p(\R^d)}\sim_{p,d} N^s \|P_Nf\|_{L^p(\R^d)}$$ for all $1\leq p\leq\infty$, and in the case $s=1$, the Riesz estimate
$$
\||\na|P_Nf\|_{L^p(\rd)}\sim_{p,d}\|\na P_N f\|_{L^p(\rd)}
$$
for $1<p<\infty$.

Recall also the Strichartz estimates:
\begin{lemma}		\label{strichartz}
Let $d\geq 3$. We call $2\leq q,r\leq \infty$ a Strichartz-admissible pair if
\begin{align}	\label{pair}
\frac{2}{q}+\frac{d}{r}=\frac{d}{2}
\end{align}
Let $(q,r)$ and $(\tilde{q},\tilde{r})$ be Strichartz-admissible pairs. 
Denote by $\tilde{q}'$ and $\tilde{r}'$ the conjugate exponents of $\tilde{q}$ and $\tilde{r}$. Let $I{\subset} \R$ be a time interval containing $t_0$. It holds 
\begin{gather}
\|\Ut f\|_{L^q_t L^r_x(\R\times \R^d)} \lesssim_{d,r,q} \|f\|_{L^2(\R^d)}     \label{hom}\\
\left \|\int_I e^{-is\Delta} F(s) ds \right\|_{L^2(\R^d)} \lesssim_{d,\tilde{r},\tilde{q}} \|F\|_{L^{\tilde{q}'}_t L^{\tilde{r}'}_x (I \times \R^d)}	\label{dual}\\
\left \|\int_{t_0}^t \Utms F(s) ds \right\|_{L^q_t L^r_x (I \times \R^d)} \lesssim_{d,r,q,\tilde{r},\tilde{q}} \|F\|_{L^{\tilde{q}'}_t L^{\tilde{r}'}_x (I \times \R^d)} \label{inhom}
\end{gather}
\end{lemma}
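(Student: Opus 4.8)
The estimates (\ref{hom})--(\ref{inhom}) are the classical Strichartz estimates for the free Schr\"odinger propagator, and the plan is to follow the $TT^*$ method together with the endpoint argument of Keel--Tao. First I would record the two basic bounds on $\Ut$: mass conservation $\|\Ut f\|_{L^2_x}=\|f\|_{L^2_x}$, and the dispersive estimate $\|\Ut f\|_{L^\infty_x}\lesssim_d |t|^{-d/2}\|f\|_{L^1_x}$, the latter read off from the explicit convolution kernel $(4\pi i t)^{-d/2}e^{i|x|^2/(4t)}$. Interpolating these gives $\|\Ut f\|_{L^r_x}\lesssim_d |t|^{-d(\half-1/r)}\|f\|_{L^{r'}_x}$ for $2\le r\le\infty$, and for a Strichartz-admissible pair $(q,r)$ the exponent is exactly $d(\half-1/r)=\f{2}{q}$.

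For the non-endpoint homogeneous estimate (\ref{hom}), by the standard $TT^*$ argument it suffices to prove that $F\mapsto\int_\R\Utms F(s)\,ds$ maps $L^{q'}_tL^{r'}_x(\rrd)$ into $L^q_tL^r_x(\rrd)$. Minkowski's integral inequality and the dispersive bound applied pointwise in $t$ reduce this to the one-dimensional inequality $\big\|\int_\R |t-s|^{-2/q}\|F(s)\|_{L^{r'}_x}\,ds\big\|_{L^q_t}\lesssim \big\|\,\|F\|_{L^{r'}_x}\big\|_{L^{q'}_t}$, which is precisely the Hardy--Littlewood--Sobolev inequality, valid since $0<\f{2}{q}<1$ and $\f{1}{q'}=\f{1}{q}+(1-\f{2}{q})$; this is where $q>2$ is used. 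The dual estimate (\ref{dual}) follows by duality from (\ref{hom}), and composing (\ref{hom}) with (\ref{dual}) gives the non-retarded inhomogeneous bound; to replace $\int_\R$ by $\int_{t_0}^t$ and obtain (\ref{inhom}) one invokes the Christ--Kiselev lemma, which applies whenever $\tilde q'<q$, i.e. whenever $(q,r)$ and $(\tilde q,\tilde r)$ are not both the endpoint pair.

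It remains to treat the endpoint $q=2$, $r=\pp$ (admissible since $d\ge3$), where the Hardy--Littlewood--Sobolev step fails because the kernel exponent $\f{2}{q}=1$ is borderline. Here I would follow Keel--Tao: decompose $\int_\R\Utms F(s)\,ds=\sum_{N\in2^\Z}T_NF$ according to $|t-s|\sim N$, and for the bilinear form $\langle T_NF,G\rangle$ combine the dispersive estimate at scale $N$ with $L^2$ conservation to obtain a pair of bounds, sharp at the diagonal exponents $(2,2)$ and $(\pp,\pp)$ respectively, whose off-diagonal interpolates decay geometrically in $N$; summing this over $N$ at the diagonal endpoint is then achieved by a real-interpolation (atomic decomposition) argument. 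This bilinear interpolation is the one genuinely delicate point. The remaining endpoint cases of (\ref{dual}) and (\ref{inhom}) in which at least one pair is the endpoint follow from the above by duality, except the doubly-endpoint case of (\ref{inhom}), where Christ--Kiselev is unavailable and one instead proves the retarded estimate directly within the same Keel--Tao scheme. Since all of this is by now standard, an alternative is simply to cite the original source or a textbook treatment such as \cite{cazenave2003semilinear}.
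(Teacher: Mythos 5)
Your outline is the standard and correct proof (dispersive estimate plus $TT^*$ and Hardy--Littlewood--Sobolev off the endpoint, Keel--Tao bilinear interpolation at $q=2$, Christ--Kiselev for the retarded estimate with distinct pairs), and I see no gaps in it. The paper itself gives no proof of this lemma --- it simply recalls the Strichartz estimates as classical --- so your closing remark that one may just cite a standard reference such as \cite{cazenave2003semilinear} is precisely what is done here.
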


\section{Function Spaces}\label{function_spaces}
We now define the function spaces in which we shall place the solution and the forcing in order to obtain local well-posedness.

Let $I$ be an open time interval.
We will place the solution $v$ to the forced NLS into the space defined by the norm
\begin{equation*}
\|v\|_{W(I)}:=\|v\|_{V(I)}+\|\na v\|_{V(I)}
\end{equation*}
where
$$\|v\|_{V(I)}:=\|v\|_{\f{2(d+2)}{d-2},\f{2d(d+2)}{d^2+4}[I]}$$
We will also denote $\|v\|_{\WI}:=\|\na v\|_{V(I)}$.

To prove local well-posedness it will be sufficient to place the forcing term $F$ into the same space $W$. However to obtain the conditional scattering result in Section \ref{scatteringsection} we will need $F$ to lie in the stronger space\footnote{Here we use the classical definition $\|\cdot\|_{X\cap Y}\equiv \|\cdot\|_X+\|\cdot\|_Y$.}
\begin{equation}\label{Rdef}
R(I):=W(I)\cap \dot{B}^{\frac{4}{d+2}}_{d+2,\frac{2(d+2)}{d}}(I)
\end{equation}
which is necessary in order to apply the theory developed in \cite{tao2005stability} to study the stability of the forced equation.

Again we will also denote $\dot{R}(I):=\dot{W}(I)\cap \dot{B}^{\frac{4}{d+2}}_{d+2,\frac{2(d+2)}{d}}(I)$.

Observe that the above norms are continuous in their endpoints and are ``time-divisible" in the sense that for each of the spaces $S(I)$ just introduced there exists a finite constant $\al(S)>0$ such that 
\begin{equation*}
\left( \sum_{j=1}^J \|v\|_{S(I_j)}^{\alpha(S)}\right)^{\frac{1}{\al(S)}} \leq\|v\|_{S(I)}
\end{equation*}
whenever $I$ is the disjoint union of consecutive intervals $(I_j)_{j=1}^J$. In particular, $\al(W)=\al(\dot{W})=\al(V)=\f{2(d+2)}{d-2}$ and $\al(R)=\al(\dot{R})=d+2$ (see, for example, \cite{dodson2019almost}).

We deduce that whenever $\|v\|_{S(I)}<\infty$ for $S$ any of $W,\dot{W},V,R$ or $\dot{R}$, we may partition $I$ into $J$ consecutive intervals $(I_j)_{j=1}^J$ with disjoint interiors such that $$\|v\|_{S(I_j)}\leq \epsilon$$ for each $j=1,...,J$ and $$J\leq 2 \left( \frac{\|v\|_{S(I)}}{\epsilon}\right)^{\alpha(S)}$$

We end this section with the observation that for $F_n=P_{\leq n} F$ the regularised forcing term as in Section \ref{regularisationsection} it holds $\|F_n\|_{S(I)}\lesssim_d\|F\|_{S(I)}$ and $$\|F_n-F\|_{S(I)}\to 0$$ as $n\to\infty$ for $S$ any of the function spaces $W,\dot{W},R,\dot{R}$ or $V$. 

\section{Local Well-Posedness}		\label{lwpsection}
In this section we will prove the deterministic local well-posedness of the problem
\begin{equation}\label{fNLS}
\begin{cases}
(i\partial_t+\Delta)v=(F+v)|\fv|^{\pmo}\\
v(t_0)=v_0\in H^1(\R^d)
\end{cases}
\end{equation}
in $H^1$ under appropriate conditions on the forcing term $F$. 
We will construct solutions via the regularised equation
\begin{equation}\label{FNLSn}
\begin{cases}
(i\dd_t+\Delta)v_n=g_n(F_n+v_n)\\
v_n(t_0)=v_{n,0}\in H^2(\R^d)
\end{cases}
\end{equation}
for $g_n$ as in Section \ref{regularisationsection}.
\subsection{Linear and Nonlinear Estimates}
Let $I\subset\R$ be an interval containing $t_0$.
First observe the following inhomogeneous estimate, which is a direct application of the Strichartz inequality (\ref{strichartz}):
$$\|e^{i(t-t_0)\Delta}v_0\|_{\dot{W}(\R)}\lesssim_d \|v_0\|_{\dot{H}^1}$$

Then by the inhomogeneous Strichartz estimate (\ref{inhom}) followed by the chain rule (\ref{chain_rule}) we have 
\begin{equation*}
\left\|\na\int_{t_0}^t \Utms g_n (u)(s) ds \right\|_{q,r[I]}\lesssim_{d,q,r} \||u|^{{\pmo}}\na u\|_{2,\f{2d}{d+2}[I]}
\end{equation*}
for any Strichartz pair $(q,r)$. Moreover, using H\"{o}lder's inequality and the Sobolev embedding $\dot{W}^{1,\frac{2d(d+2)}{d^2+4}}(\R^d)\hookrightarrow L^{\frac{2(d+2)}{d-2}}(\R^d)$ we observe that
\begin{align}
\||\uo|^{\pmo}\ut\|_{2,\f{2d}{d+2}[I]}\lesssim_d \|\uo\|_{\WI}^{\pmo}\|\ut\|_{V(I)}\label{bound707}
\end{align}
which in particular gives
$$\left\|\int_{t_0}^t \Utms g_n (u)(s) ds \right\|_{\dot{W}(I)}\lesssim\||u|^{\pmo}\na u\|_{2,\f{2d}{d+2}[I]}\lesssim_d\|u\|_{\WI}^{p}$$ for all $u\in \WI$, recalling $p=\f{d+2}{d-2}$.

\subsection{Proof of Local Well-Posedness and Scattering Condition}
\begin{theorem}\label{wellposedness}
There exists $\epsilon_0(d)>0$ such that the following holds. Let $v_0\in H^1(\R^d)$ and $F\in W\cap L^{\infty}_tL^{\f{2d}{d-2}}_x(\R)$. Let $I\ni t_0$ be a sufficiently small time interval such that 
\begin{equation}
\|e^{i(t-t_0)\Delta}v_0\|_{\dot{W}(I)}+\|F\|_{\dot{W}(I)}\leq \epsilon \label{F}
\end{equation}
 for some $0<\epsilon<\epsilon_0(d)$. Then there exists a unique solution $v\in C(I,{H}^1(\R^d))\cap W(I)$ to (\ref{fNLS}) which satisfies $$\| v\|_{\WI}\leq 4 \epsilon$$
This solution extends to a maximal interval of existence $I^*:=(T_-,T_+)$ in this space. Moreover, 
\begin{enumerate}
\item if $T_+<\infty$, then $\|v\|_{\dot{W}([t_0,T_+))}=\infty$
\item if $T_+=\infty$ and $\|v\|_{\dot{W}([t_0,T_+))}<\infty$, then the solution $v$ scatters forwards in time, i.e. there exists $v_+\in \dot{H}^1(\R^d)$ with $$\lim_{t\rightarrow \infty} \|v(t)-\Ut v_+\|_{\dot{H}^1}=0$$
\end{enumerate}
The analogous statements hold for $T_-$. 

On compact subintervals $\tilde{I}$ of $I^*$, $v$ is obtained as a limit in $L^q_tL^r_x(\tilde{I})$ of solutions $\vn$ to the regularised equation (\ref{FNLSn}), for any Strichartz pair $(q,r)$.
\end{theorem}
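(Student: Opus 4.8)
The plan is to run a standard contraction mapping argument in the space $W(I)$, working simultaneously with the regularised equation to extract the approximation statement at the end. First I would set up the solution map. Writing the Duhamel formula for \eqref{fNLS} as $\Phi(v)(t) = \Uto v_0 - i\int_{t_0}^t \Utms g(F+v)(s)\,ds$, I would show that $\Phi$ maps the ball $B := \{v\in W(I) : \|v\|_{\dot W(I)}\le 4\eps\}$ into itself and is a contraction there, provided $\eps_0$ is small. The key estimates are already in hand: by the inhomogeneous Strichartz estimate together with the chain rule \eqref{chain_rule} and the H\"older/Sobolev bound \eqref{bound707}, one has $\|\int_{t_0}^t \Utms g(F+v)\|_{\dot W(I)}\lesssim_d \|F+v\|_{\dot W(I)}^{p}\le C_d(2\eps)^p$, and the analogous bound on the $V(I)$-component (without a gradient, using that $F\in W\cap L^\infty_t L^{2d/(d-2)}_x$ to control the undifferentiated piece), so that $\|\Phi(v)\|_{\dot W(I)}\le \eps + C_d(2\eps)^p \le 4\eps$ once $\eps^{p-1}$ is small. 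For the contraction, I would use the difference bound \eqref{eqn123} in Strichartz form: $\|\Phi(v_1)-\Phi(v_2)\|_{W(I)}\lesssim_d (\|F+v_1\|_{\dot W(I)}^{p-1}+\|F+v_2\|_{\dot W(I)}^{p-1})\|v_1-v_2\|_{W(I)}$, which gives a contraction factor $\le C_d\eps^{p-1}<1$. This yields a unique fixed point $v\in C(I,H^1)\cap W(I)$ with $\|v\|_{\dot W(I)}\le 4\eps$; continuity in $H^1$ and $H^1$-boundedness follow from the same Strichartz estimates applied with the energy-admissible pair $(\infty,2)$ for $\langle\nabla\rangle v$.

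Next I would set up the maximal interval of existence by the usual gluing/continuation argument: one extends $v$ as long as the local smallness condition \eqref{F} can be arranged on a subinterval, using the time-divisibility of the $\dot W$ norm recorded in Section \ref{function_spaces}. For the blow-up alternative (1), if $T_+<\infty$ but $\|v\|_{\dot W([t_0,T_+))}<\infty$, then time-divisibility lets me partition $[t_0,T_+)$ into finitely many intervals on which $\|v\|_{\dot W}\le\eps$, and since $\|\Uto v(t_0)\|_{\dot W}$ and $\|F\|_{\dot W}$ are also small near $T_+$, one can restart the fixed-point argument at a time close to $T_+$ and extend past $T_+$, a contradiction. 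For the scattering statement (2), when $T_+=\infty$ and $\|v\|_{\dot W([t_0,\infty))}<\infty$, I would show $\{e^{-it\Delta}v(t)\}$ is Cauchy in $\dot H^1$ as $t\to\infty$: for $t<t'$, $e^{-it'\Delta}v(t')-e^{-it\Delta}v(t) = -i\int_t^{t'} e^{-is\Delta} g(F+v)(s)\,ds$, and the dual Strichartz estimate \eqref{dual} together with \eqref{chain_rule}, \eqref{bound707} bounds its $\dot H^1$ norm by $\|F+v\|_{\dot W([t,\infty))}^{p}\to 0$ since both $F$ and $v$ lie in $\dot W(\R)$ with finite norm. Hence $v_+ := \lim_{t\to\infty} e^{-it\Delta}v(t)$ exists in $\dot H^1$, giving the claimed scattering. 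The negative-time statements are identical.

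Finally, for the approximation claim, I would run the very same fixed-point argument for the regularised equation \eqref{FNLSn}. Since $g_n$ satisfies all the same bounds as $g$ with constants independent of $n$ (as noted after \eqref{gderiv}), and since $\|F_n\|_{\dot W(I)}\lesssim_d\|F\|_{\dot W(I)}$ with $\|F_n-F\|_{\dot W(I)}\to 0$ and $v_{n,0}\to v_0$ in $H^1$, the solution map $\Phi_n$ is a contraction on the same ball $B$ for $n$ large, producing $v_n\in W(I)$ with $\|v_n\|_{\dot W(I)}\le 4\eps$. To get convergence $v_n\to v$ in $L^q_tL^r_x(\tilde I)$ on compact $\tilde I\subset I^*$, I would estimate $v_n-v$ using the Duhamel difference: $v_n-v = e^{i(t-t_0)\Delta}(v_{n,0}-v_0) - i\int_{t_0}^t \Utms[g_n(F_n+v_n)-g(F+v)]$, split $g_n(F_n+v_n)-g(F+v) = [g_n(F_n+v_n)-g_n(F+v)] + [g_n(F+v)-g(F+v)]$, bound the first bracket by \eqref{eqn123}-type estimates (contracting, with small factor) and show the second tends to $0$ — here one uses $g_n(w)=g(w)$ for $|w|\le n$ together with dominated convergence, using that $F+v\in W\cap L^\infty_t L^{2d/(d-2)}_x$ so $|F+v|^{p}$ is integrable in the relevant spacetime norm. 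A Gr\"onwall/bootstrap over the finitely many subintervals composing $\tilde I$ then closes the argument. I expect the main obstacle to be precisely this last step: making the error term $g_n(F+v)-g(F+v)$ quantitatively small in the dual Strichartz norm requires a careful truncation of the set where $|F+v|$ is large, exploiting only the finiteness of $\|F+v\|_{W(I)}$ and the $L^\infty_t L^{2d/(d-2)}_x$ control on $F$, and then patching across subintervals where the local $\dot W$ norm — but not necessarily the global one — is controlled.
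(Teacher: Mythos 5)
Your overall architecture (small-data fixed point, continuation via time-divisibility, blow-up alternative, scattering via the Cauchy property of $e^{-it\Delta}v(t)$ in $\dot H^1$, approximation by the regularised flow) matches the paper's, and your treatment of the blow-up and scattering alternatives is essentially identical to the paper's. However, there is a genuine gap at the very first step: the claim that $\Phi$ is a contraction on a ball of $W(I)$, with contraction estimate
$\|\Phi(v_1)-\Phi(v_2)\|_{W(I)}\lesssim_d (\|F+v_1\|_{\dot W(I)}^{p-1}+\|F+v_2\|_{\dot W(I)}^{p-1})\|v_1-v_2\|_{W(I)}$,
does not hold for $d>6$ and does not follow from (\ref{eqn123}). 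The estimate (\ref{eqn123}) only controls $|g(u_1)-g(u_2)|$ and hence the $V(I)$-component of the difference; to control the gradient component $\|\na\Phi(v_1)-\na\Phi(v_2)\|_{V(I)}$ you must use (\ref{gderiv}), whose first term is $|v_1-v_2|^{\pmo}|\na(F+v_1)|$. Since $g_z$ is only H\"older continuous of order $p-1=\f{4}{d-2}<1$ when $d>6$, this produces a term of size $\eps\,\|v_1-v_2\|_{\dot W}^{p-1}$ in the difference estimate, which dominates $\|v_1-v_2\|_{\dot W}$ when the difference is small: the iteration map is not Lipschitz, let alone a contraction, in the $W$-metric. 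This is precisely the "non-smoothness of the nonlinearity" obstruction the paper is organised around.

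The paper circumvents this by never contracting in $W$ at all. It takes the globally defined $H^2$ solutions $v_n$ of the regularised equation (\ref{FNLSn}) (which exist by Cazenave's theory, not by a fixed point in $W$), proves that $(v_n)_n$ is Cauchy only in the derivative-free norms $L^q_tL^r_x(I)$ for Strichartz pairs — where the Lipschitz estimate coming from (\ref{eqn123}) is available — while separately propagating the uniform a priori bounds $\|v_n\|_{\dot W(I)}\le 4\eps$ and $\|v_n\|_{L^\infty_t\dot H^1}\lesssim \|v_0\|_{\dot H^1}+\eps^p$ by a continuity argument; the limit $v$ then inherits the $W$-bound and solves (\ref{fNLS}). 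Uniqueness is likewise obtained from the $V$-norm Lipschitz estimate, not from a $W$-contraction. You could repair your argument along the same lines (contract in the metric of $V(I)$, or of $L^q_tL^r_x$, on a ball that is merely bounded in $W(I)$), but as written the central fixed-point step fails. Your treatment of the error term $g_n-g$ by dominated convergence is also weaker than needed for the uniform-in-interval bookkeeping; the paper's quantitative Chebyshev bound $\|g_n(u)-g(u)\|_{2,\f{2d}{d+2}[I]}\lesssim |I|^{\f{d-2}{2(d+2)}}n^{-\f{8}{d^2-4}}\|u\|_{\dot W(I)}^{p-1}\|u\|_{\infty,p+1[I]}^{\f{d^2+4}{d^2-4}}$ is what makes the induction over subintervals close.
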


\begin{proof}
Denote by $\vn$ the unique solution in $C(I,H^2(\R^d))\cap C^1(I,L^2(\R^d))$ to (\ref{FNLSn}) with initial data $v_{n,0}=P_{\leq n}v_0$. 
We will show that $(\vn)_n$ is Cauchy in $V(I)$. Observe that for any  Strichartz pair $(q,r)$ and any $l\geq n$, we have
\begin{align*}
\|\vn-v_l\|_{q,r[I]}
\lesssim&_{q,r,d} \|v_{n,0}-v_{l,0}\|_{L^2(\R^d)}+\|g_n(\fvn)-g_n(F_l+v_l)\|_{2,\f{2d}{d+2}[I]}\\
&+\|g_n (\fvl)-g(\fvl)\|_{2,\f{2d}{d+2}[I]}+\|g(\fvl)-g_l(\fvl)\|_{2,\f{2d}{d+2}[I]}
\end{align*}
We bound each of these terms separately. Firstly, by (\ref{eqn123}) applied to $g_n $ and the nonlinear estimate (\ref{bound707}) we have, for any $u_1$, $u_2\in W(I)$,
\begin{align*}
\|g_n(u_1)-g_n(u_2)\|_{2,\f{2d}{d+2}[I]}\lesssim&_{d} (\|\uo\|_{\dot{W}(I)}^{p-1}+\|\ut\|_{\dot{W}(I)}^{p-1})\|\uo-\ut\|_{V(I)}
\end{align*}
and the analogous bound for $g$.
Next, since $g_n (u)=g(u)$ for $|u|\leq n$, we may bound
\begin{align*}
\|g_n(u)-g(u)\|_{2,\f{2d}{d+2}[I]}\lesssim&_{d} \||{u}|^p\mathds{1}_{|{u}|\geq n}\|_{2,\f{2d}{d+2}[I]}\\
\lesssim&_{d} \|{u}\|_{\dot{W}(I)}^{p-1}\|{u}\cdot\mathds{1}_{|{u}|\geq n}\|_{\f{2(d+2)}{d-2},\f{2d(d+2)}{d^2+4}[I]}\\
\lesssim&_{d} \| {u}\|_{\dot{W}(I)}^{p-1}\|\mathds{1}_{|{u}|\geq n}\|_{\infty,\f{d(d+2)}{4}[I]} \|u\|_{\f{2(d+2)}{d-2},\f{2d}{d-2}[I]}\\
\lesssim&_{d} \|{u}\|_{\dot{W}(I)}^{p-1}
\left(\sup_{t\in I}\f{1}{n^{\f{2d}{d-2}}}\intr|{u}|^{\f{2d}{d-2}}dx\right)^{\f{4}{d(d+2)}}
 |I|^{\f{d-2}{2(d+2)}}\|u\|_{\infty,\f{2d}{d-2}[I]}\\
\lesssim&_{d} |I|^{\f{d-2}{2(d+2)}}n^{-\f{8}{d^2-4}}\|{u}\|_{\dot{W}(I)}^{p-1}
 \|u\|_{\infty,p+1[I]}^{\f{d^2+4}{d^2-4}}
\end{align*}
Thus since $l\geq n$, using that $F_n=P_{\leq n} F$ and $\dot{H}^1(\R^d)\hookrightarrow L^{p+1}(\R^d)$, we have
\begin{align}
\|\vn-v_l\|_{q,r[I]}
\lesssim&_{q,r,d} \|v_{n,0}-v_{l,0}\|_{L^2(\R^d)}\nonumber\\
&+(\|\vn\|_{\dot{W}(I)}^{p-1}+\|\vl\|_{\dot{W}(I)}^{p-1}+\| F\|_{\dot{W}(I)}^{p-1}+\| F\|_{\dot{W}(I)}^{p-1})\|\vn-\vl+\fn-\fl\|_{V(I)}\nonumber\\
&+|I|^{\f{d-2}{2(d+2)}}n^{-\f{8}{d^2-4}}(\|\vl\|_{\dot{W}(I)}^{p-1}+\| F\|_{\dot{W}(I)}^{p-1})
 (\|\vl\|_{L^\infty_t \dot{H}^1(I)}^{\f{d^2+4}{d^2-4}}+\|F\|_{\infty,p+1[\R]}^{\f{d^2+4}{d^2-4}}) \label{cauchy1}
\end{align}

To proceed, we need a bound on $\|\vn\|_{\dot{W}(I)}$. By the nonlinear estimates we have, for any $t_0\in I'\subset I$,
\begin{small}
\begin{align*}
\| \vn\|_{\dot{W}(I')}\leq& \|e^{i(t-t_0)\Delta}  (v_{n,0}-v_0)\|_{\dot{W}(I)}+\|e^{i(t-t_0)\Delta}  v_0\|_{\dot{W}(I)}+C_d(\| \vn\|_{\dot{W}(I')}^p+\| \fn\|_{\dot{W}(I')}^p)\\
\leq& 2\epsilon+C_d\| \vn\|_{\dot{W}(I')}^p
\end{align*}
\end{small}
for $n$ sufficiently large, $\epsilon(d)$ sufficiently small. Taking $\epsilon(d)$ smaller still, 
a standard continuity argument shows that $\|\vn\|_{\WI}\leq 4\epsilon$.

Lastly we observe that
\begin{equation}
\|\vn\|_{L^\infty_t \dot{H}^1_x(I)}\lesssim_d \|v_{n,0}\|_{\dot{H}^1}+\| \vn\|_{\dot{W}(I)}^p+\|F_n\|_{\dot{W}(I)}^p\lesssim_d \|v_0\|_{\dot{H}^1}+\epsilon^p\label{eqn555}
\end{equation}
so the $\vn$ are uniformly bounded in $\dot{H}^1$ on $I$, say by $C(v_0,\epsilon,d)$.

Putting the above estimates into (\ref{cauchy1}) along with the assumption (\ref{F}), we see that $(\vn)_n$ is Cauchy in $L^q_t L^r_x$ for any Strichartz pair $(q,r)$. In particular $(\vn)_n$ has a limit $v\in V(I)$, which still satisfies $\|v\|_{\WI}\leq 4\epsilon$ and solves equation (\ref{fNLS}).

By standard arguments, one may extend $v$ to a maximal interval of existence $(T_-,T_+)$, such that it is the unique solution to (\ref{fNLS}) in $C([\alpha,\beta];H^1_x(\R^d))\medcap W([\al,\beta])$ for any $T_-<\alpha<t_0<\beta<T_+$.

We next prove the blow up criterion. We work forwards in time since the result in the negative time direction is proved in the same way. Suppose that $T_+<\infty$ and $\| v\|_{\dot{W}([t_0,T_+))}<\infty$. Consider a sequence $t_n\nearrow T_+$. Note that
\begin{align*}
e^{i(t-t_n)\Delta}v(t_n)= e^{i(t-t_0)\Delta}v_0-i\int_{t_0}^{t_n} \Utms g(F+v)ds
= v(t)+i\int_{t_n}^t\Utms g(F+v)ds
\end{align*}
Thus by the continuity of the $\dot{W}$ norm, we find
\begin{align}
&\|e^{i(t-t_n)\Delta} v(t_n)\|_{\dot{W}([t_n,T_+))}+\| F\|_{\dot{W}([t_n,T_+))}\nonumber\\
\leq &\| v\|_{\dot{W}([t_n,T_+))}+C_d(\| v\|_{\dot{W}([t_n,T_+))}^{p}+\| F\|_{\dot{W}([t_n,T_+))}^{p})+\| F\|_{\dot{W}([t_n,T_+))}\leq\f{\epsilon}{2}\label{eqn778}
\end{align}
for $n$ sufficiently large. Then since $F$, $e^{i(t-t_n)\Delta} v(t_n)\in \dot{W}(\R)$ we find $\eta>0$ such that $$\|e^{i(t-t_n)\Delta} v(t_n)\|_{\dot{W}([t_n,T_++\eta])}+\| F\|_{\dot{W}([t_n,T_++\eta])}\leq\epsilon$$
Therefore by the local well-posedness result we can extend the solution to $T_++\eta$, which is a contradiction.

Finally, we turn to scattering. Suppose that $T_+=\infty$ and $\| v\|_{\dot{W}([t_0,\infty))}<\infty$. Define $$v_+=e^{-it_0\Delta}v_0-i\int_{t_0}^\infty e^{-is\Delta}g(F+v)ds$$
Then for any $t>t_0$, the dual Strichartz estimate (\ref{dual}) gives
\begin{align*}
\|v(t)-\Ut v_+\|_{\dot{H}^1(\R^d)}&= \left\| \int_t^\infty \Utms g(F+v)(s)ds\right\|_{\dot{H}^1(\R^d)}\\
&\lesssim_d \|v\|_{\dot{W}([t,\infty))}^{p}+\|F\|_{\dot{W}([t,\infty))}^{p}\rightarrow 0 \text{ as } t\rightarrow \infty
\end{align*}
since $\|v\|_{\dot{W}([t_0,\infty))}<\infty$ and $\|F\|_{\dot{W}([t_0,\infty))}<\infty$. Thus $v_+\in \ho$ and the solution $v$ scatters to $v_+$ as $t\rightarrow +\infty$.
Lastly, the fact that $v$ is the limit of $(\vn)_n$ on compact subintervals of $(T_-,T_+)$ follows by induction of the existence proof over subintervals on which $\|v\|_{\dot{W}}$ and $\|F\|_{\dot{W}}$ are small, using (\ref{eqn778}) to obtain (\ref{F}) on each interval. The number of such intervals required is controlled due to the time-divisibility of the $\dot{W}$-norm.
\end{proof}

\begin{remark}\label{dyadic}
Observe that by applying Strichartz's inequality at dyadic scales followed by the dual estimate (\ref{rtiq}) we obtain
\begin{align*}
\left(\sum_{N\in 2^{\mathbb{Z}}}N^2\|P_Nv\|_{q,r[I]}^2\right)^\frac{1}{2}\lesssim&
\left(\sum_{N\in 2^{\mathbb{Z}}}\|P_Nv_0\|_{\dot{H}^1}^2\right)^\frac{1}{2}+\left(\sum_{N\in 2^{\mathbb{Z}}}\|P_N\na g(F+v)\|_{2,\f{2d}{d+2}[I]}^2\right)^\frac{1}{2}\\
\lesssim&\|v_0\|_{\dot{H}^1}+\|\na g(F+v)\|_{2,\f{2d}{d+2}[I]}<\infty
\end{align*}
for any Strichartz pair $(q,r)$, $I\subset\subset I^*$.
\end{remark}

\section{Conditional Scattering}		\label{scatteringsection}
In this section we will prove the following theorem, giving a sufficient condition for scattering of the solution to the forced NLS
\begin{equation}	\label{dFNLS}
\begin{cases}
(i\dd_t+\Delta)v=|F+v|^{\pmo}(\fv)\\
v(t_0)=v_0\in H^1(\R^d)
\end{cases}
\end{equation}
studied in the previous section.

\begin{theorem}(Conditional Scattering)	\label{conditionalscattering}
Let $v_0\in H^1(\R^d)$, $F\in R\cap L^\infty_t L^{\f{2d}{d-2}}_x(\R)$ (see the definition in \ref{Rdef}). Let $v(t)$ be the solution to (\ref{dFNLS}) defined on its maximal interval of existence $I^*$.  Suppose moreover that $$M:=\sup_{t\in I^*} E(v(t))<\infty$$ 
Then $I^*=\R$, i.e. $v(t)$ is globally defined, and it holds that 
\begin{align}	\label{Wbound}
\|v\|_{\dot{W}(\R)}\leq C(M,\|F\|_{\dot{R}(\R)},d)
\end{align}
As a result, the solution $v$ scatters in $\dot{H}^1$ as $t\rightarrow \pm \infty$.
\end{theorem}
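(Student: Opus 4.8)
The plan is to reduce the forced equation \eqref{dFNLS} to the unforced energy-critical NLS \eqref{NLS} via a stability/perturbation argument, and then invoke the global spacetime bound of Visan \cite{visan2007defocusing}. Concretely, I would first consider the genuine solution $w$ to \eqref{NLS} with $w(t_0)=v(t_0)$; since $v(t_0)\in H^1$ and $E(v(t_0))\leq M$, Visan's theorem gives $w$ globally defined with $\|w\|_{\dot W(\R)}\leq C(M,d)$. The goal is then to show that $v$ is a good approximate solution to the equation satisfied by $w$, so that a perturbation lemma forces $\|v\|_{\dot W(\R)}$ to be finite and bounded by $C(M,\|F\|_{\dot R(\R)},d)$. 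The error term in the equation for $v$ (viewed as an approximate solution of the unforced NLS) is $e:=g(F+v)-g(v)$, which by \eqref{eqn123} and the chain rule \eqref{gderiv} is controlled, after taking a gradient, by terms of the form $|F|^{p-1}|\nabla v|$, $|v|^{p-1}|\nabla F|$, $|F|^{p-1}|\nabla F|$ and lower-order pieces; all of these can be estimated in a dual Strichartz norm by a positive power of $\|F\|_{\dot R(\R)}$ times controlled quantities, because $F\in \dot R(\R)$ and the $\dot R$ norm is built precisely so that the Tao--Visan estimates apply.

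The main technical step — and the main obstacle — is that for $d>6$ the nonlinearity $g$ is not twice differentiable, so the classical Tao--Visan \cite{tao2005stability} stability theorem does not apply off the shelf; one must use the \emph{exotic Strichartz space} $\dot R(I)=\dot W(I)\cap \dot B^{4/(d+2)}_{d+2,2(d+2)/d}(I)$ and the fractional-derivative nonlinear estimates from \cite{tao2005stability} (which only require the Hölder bound \eqref{eqn556} on $g_z$, not a full second derivative). So I would first state and prove (or carefully adapt) a ``long-time perturbation'' lemma in the spirit of \cite[Theorem 3.1]{tao2005stability}: given an approximate solution $\tilde u$ on an interval $I$ with $\|\tilde u\|_{\dot W(I)}+\|\tilde u\|_{\dot B^{4/(d+2)}_{d+2,2(d+2)/d}(I)}\leq L$, with error $e$ small in the appropriate dual norm and with $\|e^{i(t-t_0)\Delta}(u(t_0)-\tilde u(t_0))\|_{\dot R(I)}$ small, one concludes $\|u-\tilde u\|_{\dot R(I)}\lesssim_L$ (small). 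The number of subintervals needed is controlled by $L$, hence by $M$ via Visan, and on each subinterval the smallness of $e$ follows by taking $\|F\|_{\dot R}$ small — but since we cannot assume $\|F\|_{\dot R(\R)}$ is small, the standard device is to first subdivide $\R$ into finitely many intervals $I_1,\dots,I_J$ (with $J=J(M,\|F\|_{\dot R(\R)},d)$, using time-divisibility of the $\dot R$ norm) on each of which $\|F\|_{\dot R(I_j)}$ and $\|w\|_{\dot W(I_j)}$ are below the perturbation threshold, then iterate the perturbation lemma across these intervals, at each stage updating the (bounded) $\dot H^1$ data and absorbing the accumulated errors.

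Once $\|v\|_{\dot W(\R)}\leq C(M,\|F\|_{\dot R(\R)},d)<\infty$ is established, the conclusions follow from the local theory already proved: by the blow-up alternative in Theorem \ref{wellposedness}(1) the finiteness of $\|v\|_{\dot W}$ on $[t_0,T_+)$ and $(T_-,t_0]$ forces $T_\pm=\pm\infty$, so $I^*=\R$; and by Theorem \ref{wellposedness}(2) (and its backward analogue) the finiteness of $\|v\|_{\dot W([t_0,\infty))}$ and $\|v\|_{\dot W((-\infty,t_0])}$ yields scattering states $v_\pm\in\dot H^1$ with $\|v(t)-e^{it\Delta}v_\pm\|_{\dot H^1}\to 0$ as $t\to\pm\infty$. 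A minor point to handle carefully is that the perturbation lemma must be run in the $\dot R$ norm (not just $\dot W$) because the nonlinear estimates close only there; but the final scattering statement only needs the $\dot W$ component, which is what Theorem \ref{wellposedness} requires, so no extra work is needed at the last step beyond noting $\dot W(\R)\subset \dot W([t_0,\infty))$ etc.
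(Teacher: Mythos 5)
Your proposal follows essentially the same route as the paper: compare $v$ with Visan's global unforced solutions, run an adapted Tao--Visan stability argument in the exotic space $\dot{R}$ (needed because $g$ is not twice differentiable for $d>6$), iterate over finitely many subintervals obtained from the time-divisibility of the $\dot{R}$-norm together with the uniform energy bound, and conclude via the blow-up and scattering criteria of Theorem \ref{wellposedness}. The one bookkeeping caveat is that the a priori spacetime bound in the perturbation lemma must be carried by the unforced reference solution (re-initialized at $v(t_{j-1})$ on each subinterval), not by $v$ itself: estimating the error $g(F+v)-g(v)$ requires control of $\nabla v$ in Strichartz norms, which is exactly what one is trying to prove, and the paper's Lemmas \ref{shorttime} and \ref{longtime} are arranged precisely to avoid this circularity.
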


\bigskip
Throughout this section $v$ will refer to the solution to (\ref{dFNLS}) obtained in Theorem \ref{wellposedness}, defined on its maximal interval of existence $I^*:=(T_-,T_+)$.
We first present a lemma bounding the $\dot{W}(\R)$ norm of solutions to the unforced defocusing equation
\begin{equation}	\label{NLS2}
\begin{cases}
(i\dd_t+\Delta)u=|u|^{\pmo}u\\
u(t_0)=u_0\in \dot{H}^1(\R^d)
\end{cases}
\end{equation}

\begin{lemma}		\label{nondec}
There exists a non-decreasing function $K:[0,\infty)\rightarrow [0,\infty)$ with the following property. Let $u_0\in \dot{H}^1(\R^d)$ and $t_0\in \R$. Then there exists a unique global solution $u\in C(\R;\dot{H}^1(\R^d))$ to the defocusing energy-critical NLS (\ref{NLS2}) satisfying $$\|u\|_{\dot{W}(\R)}\leq K(E(u_0))$$ where $$E(u_0):=\frac{1}{2}\intr |\na u_0|^2dx+\frac{d-2}{2d}\intr|u_0|^{\frac{2d}{d-2}} dx$$
\end{lemma}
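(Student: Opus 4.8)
The statement is precisely the main theorem of Visan \cite{visan2007defocusing} (global well-posedness and scattering for the defocusing energy-critical NLS in $\dot H^1(\R^d)$, $d\geq 5$), repackaged with a quantitative and monotone bound on the $\dot W(\R)$-norm. So the plan is not to reprove global well-posedness from scratch, but to extract from the known theory a bound that depends only on $E(u_0)$ and to upgrade it to a \emph{non-decreasing} function of the energy. First I would recall that \cite{visan2007defocusing} produces, for every $u_0\in\dot H^1(\R^d)$, a unique global solution $u\in C(\R;\dot H^1)$ with a finite spacetime bound $\|u\|_{\dot W(\R)}\leq C(E(u_0))$; the dependence is only through the energy because the energy is conserved and the scaling-critical nature of the problem means no other scale enters (indeed the proof is an induction-on-energy / concentration-compactness argument whose output is exactly a function of $E(u_0)$). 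One technical point worth a sentence: Visan works with the Strichartz-type norm $\|\nabla u\|_{L^{2(d+2)/(d-2)}_t L^{2d(d+2)/(d^2+4)}_x}$, which is by definition our $\dot W(\R)$-norm, so no translation between function spaces is needed.

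The second, genuinely necessary, step is monotonisation. The constant $C(\cdot)$ coming out of the literature need not be non-decreasing, so I would define
\[
K(E):=\sup\bigl\{\|u\|_{\dot W(\R)} : u \text{ solves } (\ref{NLS2}) \text{ for some } u_0\in\dot H^1(\R^d),\ t_0\in\R,\ E(u_0)\leq E\bigr\}.
\]
By the previous paragraph this supremum is finite for every $E\in[0,\infty)$ (it is bounded by $\sup_{E'\leq E} C(E')<\infty$ once we know $C$ is, say, locally bounded — which again follows from the induction-on-energy structure), it is non-decreasing in $E$ by construction, and it manifestly satisfies $\|u\|_{\dot W(\R)}\leq K(E(u_0))$ for every solution. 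Translation invariance in $t_0$ is automatic since the equation is autonomous, so including the supremum over $t_0$ changes nothing; I mention it only because the lemma is stated with an arbitrary $t_0$.

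The only mild obstacle is justifying that $C(E)$ is locally bounded (equivalently that $K$ is finite), i.e. that the bound from \cite{visan2007defocusing} does not blow up at finite energy. This is standard: the induction-on-energy argument shows that the set of energies $E$ for which a uniform spacetime bound holds is open, closed, and nonempty in $[0,\infty)$, hence all of $[0,\infty)$, with the bound at level $E$ controlled in terms of the bound just below $E$; alternatively one invokes the qualitative statement ``every finite-energy solution is global with finite $\dot W(\R)$-norm'' together with a routine contradiction/compactness argument to pass to a uniform bound on $\{E(u_0)\leq E\}$. I would simply cite \cite{visan2007defocusing} for the finiteness and well-posedness, define $K$ by the displayed supremum, and note the two properties (non-decreasing; the asserted bound) are immediate from the definition. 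This disposes of the lemma in a short paragraph in the actual proof.
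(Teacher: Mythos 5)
Your proposal is correct and takes essentially the same route as the paper: both simply invoke Visan \cite{visan2007defocusing} for a global spacetime bound depending only on the energy, and observe that such a bound can be taken non-decreasing (the paper asserts this directly from the statement of Visan's results, while you monotonise explicitly via a supremum — a cosmetic difference). One small imprecision: Visan's Theorem 1.1 controls $\|u\|_{L^{2(d+2)/(d-2)}_{t,x}}$ rather than the gradient norm, so the paper additionally combines it with Visan's Lemma 3.1 (persistence of regularity, giving the full $\dot{S}^1$ bound) and the Littlewood--Paley inequality to reach $\dot{W}(\R)$; your claim that ``no translation between function spaces is needed'' glosses over this step, though it is routine.
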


\begin{proof}
The existence of a global solution follows from the work of Visan \cite{visan2007defocusing}. Combining Theorem 1.1 and Lemma 3.1 of \cite{visan2007defocusing} with the conservation of energy for solutions to (\ref{NLS2}),\footnote{Conservation of energy for solutions to (\ref{NLS2}) is well-known. Nonetheless we remark that, as in the next section of this paper, the formal calculations used to prove it can for example be justified via the regularisation (\ref{reg_nls}), using the stability theory in Theorem 1.3 of \cite{tao2005stability} to show that solutions of the regularised problem converge locally uniformly to solutions of (\ref{NLS2}) in $\dot{H}^1_x$.} we infer the existence of a non-decreasing function $K:[0,\infty)\rightarrow [0,\infty)$ such that the solution $u\in C(\R;\dot{H}^1(\R^d))$ to (\ref{NLS2}) satisfies $$\|u\|_{\dot{S}^1(\R \times \R^d)} \leq K(E(u_0))$$
where $\|u\|_{\dot{S}^1(\R\times\R^d)}:=\sup \left(\sum_{N\in 2^{\Z}} N^2\|P_N u\|_{q,r[\R]}^2\right)^\frac{1}{2}$, with the supremum taken over all Strichartz admissible pairs $(q,r)$. Since this norm controls the $\dot{W}$-norm (by the Littlewood-Paley inequality) we have the result.
\end{proof}

Given the blow-up criterion proved in Theorem \ref{wellposedness}, to prove global existence and scattering of $v$ it is sufficient to show that
\begin{align}		\label{Wbound2}
\|v\|_{\dot{W}(I^*)}<\infty
\end{align}
With this in mind, and in light of Lemma \ref{nondec}, we will develop a suitable perturbation theory to compare solutions of (\ref{dFNLS}) with those of (\ref{NLS2}) in $\dot{W}$. 

We start with a lemma concerning short-time perturbations.

\begin{lemma}(Short-time perturbations)		\label{shorttime}
Let $I{\subset} \R$ be a compact time interval containing $t_0$ and let $u_0, v_0\in H^1(\R^{d})$ with 
\begin{align*}
\|u_0\|_{\ho},\|v_0\|_{\ho}\leq E
\end{align*}
for some $E>0$. Let $u$ solve the defocusing NLS (\ref{NLS2}) with initial data $u(t_0)=u_0$.

Let $F\in R\cap L^{\infty}_t L^\f{2d}{d-2}_x(\R)$. Then there exists a constant $\epsilon_0(E,d)\in (0,1)$ such that if we further suppose
\begin{gather}
\|u\|_{\dot{W}(I)}\leq \epsilon_0		\label{condition1}\\
\left(\sum_{N}\|P_N{\Uto}(u_0-v_0)\|_{\dot{W}(I)}^2\right)^\frac{1}{2}\leq \epsilon		\label{condition2}\\
\|F\|_{\dot{R}(I)}\leq \epsilon		\label{condition3}
\end{gather}
for some $0<\epsilon<\epsilon_0$, then there exists a unique solution $v:I\times \R^d\rightarrow \mathbb{C}$ solving the forced equation (\ref{dFNLS}) with $v(t_0)=v_0$ satisfying
\begin{gather}
\|v-u\|_{\dot{W}(I)}\leq C_{d,1}\epsilon^{\frac{7}{(d-2)^2}}			\label{result1} \\
\|\na [g(F+v)-g(u)]\|_{2,\frac{2d}{d+2}[I]}\leq C_{d,1} \epsilon^{\frac{28}{(d-2)^3}}	\label{difference_bound}
\end{gather}
for a constant $C_{d,1}>1$ depending only on the dimension $d$.
\end{lemma}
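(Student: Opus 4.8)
The plan is to obtain $v$ from the deterministic local theory of Section~\ref{lwpsection} and then bound $w:=v-u$ by an iterated (bootstrap) estimate of the type developed by Tao--Visan \cite{tao2005stability} for high-dimensional stability. First I would verify the hypotheses of Theorem~\ref{wellposedness} on $I$: Duhamel for $u$ gives $e^{i(t-t_0)\Delta}u_0=u(t)+i\int_{t_0}^t e^{i(t-s)\Delta}g(u)\,ds$, so the nonlinear estimates of Section~\ref{lwpsection} yield $\|e^{i(t-t_0)\Delta}u_0\|_{\dot W(I)}\lesssim_d\|u\|_{\dot W(I)}+\|u\|_{\dot W(I)}^{\p}\lesssim\eps_0$ by (\ref{condition1}); combining this with (\ref{condition2}) (and $\|h\|_{\dot W(I)}\le(\sum_N\|P_Nh\|_{\dot W(I)}^2)^{1/2}$, by Littlewood--Paley and Minkowski) and with (\ref{condition3}) gives $\|e^{i(t-t_0)\Delta}v_0\|_{\dot W(I)}+\|F\|_{\dot W(I)}\lesssim_d\eps_0$, which lies below the dimensional threshold of Theorem~\ref{wellposedness} once $\eps_0=\eps_0(E,d)$ is chosen small enough. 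Thus a unique $v\in C(I;H^1)\cap W(I)$ solving (\ref{dFNLS}) with $v(t_0)=v_0$ exists, with $\|v\|_{\dot W(I)}\lesssim\eps_0$, so that $\|w\|_{\dot W(I)}\lesssim\eps_0$ as an a priori bound; uniqueness of $v$ on $I$ is the small-subinterval Strichartz argument via (\ref{eqn123}) used in the proof of Theorem~\ref{wellposedness}.

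Next I would estimate $w$, which satisfies $(i\partial_t+\Delta)w=[g(u+w)-g(u)]+[g(F+u+w)-g(u+w)]$ with $w(t_0)=v_0-u_0$. Duhamel together with the inhomogeneous Strichartz estimate applied at dyadic frequencies (as in Remark~\ref{dyadic}) gives
\[
\|w\|_{\dot W(I)}\lesssim_d\Big(\sum_N\|P_N e^{i(t-t_0)\Delta}(v_0-u_0)\|_{\dot W(I)}^2\Big)^{1/2}+\|\na[g(u+w)-g(u)]\|_{2,\f{2d}{d+2}[I]}+\|\na[g(F+u+w)-g(u+w)]\|_{2,\f{2d}{d+2}[I]}.
\]
The first term is $\le\eps$ by (\ref{condition2}); the third carries a factor of $F$ or $\na F$ in every summand (via (\ref{gderiv}) with $u_1=F+u+w$, $u_2=u+w$), hence is $O(\eps)$ by (\ref{condition3}) once the $\dot B^{\f{4}{d+2}}_{d+2,\f{2(d+2)}{d}}$-component of $\dot R(I)$ in (\ref{Rdef}) is used, as in \cite{tao2005stability}, to keep the contribution of $|F|^{\pmo}|\na u|$ of order $\eps$ rather than $\eps^{\pmo}$. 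For the second term the pointwise bounds (\ref{gderiv}), (\ref{eqn123}), (\ref{eqn556}) express $\na[g(u+w)-g(u)]$ as a finite sum of terms $|a|^{\pmo}|\na b|$ with $a,b\in\{u,w\}$, and H\"older in spacetime with the Sobolev embedding $\dot{W}^{1,\f{2d(d+2)}{d^2+4}}\hookrightarrow L^{\f{2(d+2)}{d-2}}$ (exactly as in (\ref{bound707})) bounds each such term by $\lesssim_d\|a\|_{\dot W(I)}^{\pmo}\|b\|_{\dot W(I)}$ --- using also that $\|u\|_{L^\infty_t\dot H^1(I)}$ is controlled by $E$ to estimate $\na u$ in the auxiliary Strichartz norms interpolated against $\|u\|_{\dot W(I)}\le\eps_0$. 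This produces a closed inequality of the schematic form
\[
\|w\|_{\dot W(I)}\lesssim_d\eps+\eps_0^{\pmo}\|w\|_{\dot W(I)}+\eps_0\|w\|_{\dot W(I)}^{\pmo}+\|w\|_{\dot W(I)}^{\p}.
\]

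The hard part is closing this inequality. Since $d>6$ we have $\pmo=\f{4}{d-2}<1$, so the term $\eps_0\|w\|_{\dot W(I)}^{\pmo}$ is \emph{sub-linear} in $\|w\|_{\dot W(I)}$ and a direct contraction is unavailable --- this is where the non-smoothness of the nonlinearity in high dimensions bites, and it is the reason the conclusion is stated only with the small positive powers $\f{7}{(d-2)^2}$ and $\f{28}{(d-2)^3}$ rather than with $\eps$ itself. I would resolve it exactly as in \cite{tao2005stability}: absorb $\eps_0^{\pmo}\|w\|_{\dot W(I)}$ and (using the a priori bound) $\|w\|_{\dot W(I)}^{\p}$ into the left-hand side, then iterate the resulting inequality a finite, $d$-dependent number of times starting from $\|w\|_{\dot W(I)}\lesssim\eps_0$; bookkeeping the accumulated loss over these finitely many steps gives (\ref{result1}) with the exponent $\f{7}{(d-2)^2}$. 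Finally (\ref{difference_bound}) follows by substituting (\ref{result1}) into the nonlinear estimate for $\na[g(F+v)-g(u)]$: the dominant term there is $\||w|^{\pmo}|\na u|\|_{2,\f{2d}{d+2}[I]}\lesssim\|w\|_{\dot W(I)}^{\pmo}\|u\|_{\dot W(I)}\lesssim\big(\eps^{\f{7}{(d-2)^2}}\big)^{\pmo}\eps_0\lesssim\eps^{\f{28}{(d-2)^3}}$, every other term being at least as small after the same substitution. The argument is symmetric forwards and backwards in time.
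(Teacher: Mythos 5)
Your proposal correctly sets up the equation for $w=v-u$, correctly reduces to the schematic inequality
$\|w\|_{\dot W(I)}\lesssim_d\eps+\eps_0^{\pmo}\|w\|_{\dot W(I)}+\eps_0\|w\|_{\dot W(I)}^{\pmo}+\|w\|_{\dot W(I)}^{\p}$,
and correctly identifies the sub-linear term $\eps_0\|w\|_{\dot W(I)}^{\pmo}$ (coming from $\||w|^{\pmo}\nabla u\|_{2,\frac{2d}{d+2}}$) as the obstruction. But the closing step you propose — absorbing the linear and super-linear terms and then iterating the resulting inequality $a\lesssim\eps+\eps_0 a^{\pmo}$ finitely many times from the seed $a\lesssim\eps_0$ — does not prove the claimed bound, and no amount of "bookkeeping the accumulated loss" can fix it. The coefficient in front of the sub-linear term is $\eps_0$, not $\eps$, so the fixed point of that iteration is $a\simeq\eps_0^{\frac{1}{1-\pmo}}=\eps_0^{\frac{d-2}{d-6}}$: after $n$ steps one obtains at best $a\lesssim\eps+\eps_0^{1+\pmo+\cdots+(\pmo)^n}$, a quantity bounded below by a constant depending only on $\eps_0(E,d)$ and entirely independent of $\eps$. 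Since the conclusion (\ref{result1}) must hold uniformly for all $\eps\in(0,\eps_0)$, in particular as $\eps\to0$ with $\eps_0$ fixed, the iteration cannot produce a bound of the form $C_{d,1}\eps^{\frac{7}{(d-2)^2}}$.

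The paper resolves this differently — and this is exactly the mechanism from Tao--Visan that you cite but do not actually reproduce. One introduces the auxiliary exotic Strichartz spaces
$\|f\|_X:=\Big(\sum_N N^{\frac{8}{d+2}}\|P_Nf\|_{d+2,\frac{2(d+2)}{d}}^2\Big)^{1/2}$ and
$\|h\|_Y:=\Big(\sum_N N^{\frac{8}{d+2}}\|P_Nh\|_{\frac{d+2}{3},\frac{2(d+2)}{d+4}}^2\Big)^{1/2}$,
together with the Strichartz-type estimate $\|\int_{t_0}^t\Utms G\,ds\|_X\lesssim_d\|G\|_Y$ and the nonlinear estimate $\|g_z(v)u\|_Y\lesssim_d\|v\|_{\dot W}^{\pmo}\|u\|_X$. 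The crucial structural point is that in the last estimate the fractional power $\pmo$ falls on the $\dot W$-norm (which is already known to be $O(\eps_0)$) while the $X$-norm appears linearly; this makes the $X$-space bootstrap a genuine contraction. One also interpolates $\|f\|_X\lesssim_d\big(\sum_N\|P_Nf\|_{\dot W}^2\big)^{\frac{1}{d-2}}\big(\sum_N\|P_Nf\|_{L^\infty_t\dot H^1_x}^2\big)^{\frac{d-4}{2(d-2)}}$, which converts the Besov-type smallness $\eps$ in (\ref{condition2}) into a genuine $\eps$-power $\|e^{i(t-t_0)\Delta}w(t_0)\|_X\lesssim_d\eps^{\frac{2}{d-2}}E^{\frac{d-4}{d-2}}$. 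Running the continuity argument in $X$ first yields $\|w\|_X\lesssim_d\eps^{\frac{2}{d-2}}E^{\frac{d-4}{d-2}}$; only then does one return to the $\dot W$ inequality, replace $\||w|^{\pmo}\nabla u\|_{2,\frac{2d}{d+2}}\lesssim_d E\|w\|_X^{\pmo}\lesssim_d E^{1/2}\eps^{\frac{8}{(d-2)^2}}$, and close the $\dot W$ bootstrap to get (\ref{result1}) and hence (\ref{difference_bound}). Your interpolation remark about $\nabla u$ between auxiliary Strichartz norms and $\dot W$ is on the right track but aimed at the wrong object: it is $w$, not $u$, that must be carried in an auxiliary norm, and what makes it work is that the $X$-norm enters linearly rather than with a fractional power.
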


\begin{proof}
In view of the local existence theory, it suffices to prove (\ref{result1}) and (\ref{difference_bound}) as a priori estimates
. In what follows all spacetime norms are taken over $I\times \R^d$.
Define $w:=v-u$, which solves
\begin{equation}	\label{weqn}
\begin{cases}
(i\dd_t+\Delta)w=|u+w+F|^{\pmo}(u+w+F)-|u|^{\pmo}u \text{ on } I\times \R^{d} \\
w(t_0)=v_0-u_0
\end{cases}
\end{equation}
with $\|w(t_0)\|_{\dot{H}^1}\lesssim E$. We have
\begin{align}       \label{boundd}
\|w\|_{\dot{W}} \lesssim_d \|e^{i(t-t_0)\Delta} w(t_0)\|_{\dot{W}}+\|\na[g(\fuw)-g(u)]\|_{2,{\frac{2d}{d+2}}}	
\end{align}
where by (\ref{gderiv})
\begin{align*}
&\text{ }\|\na[g(\fuw)-g(u)]\|_{2,{\frac{2d}{d+2}}} \\
\lesssim&_d\||F|^{\pmo}\na F\|_{2,{\frac{2d}{d+2}}}+\||w|^{\pmo}\na F\|_{2,{\frac{2d}{d+2}}}+\||F|^{\pmo}\na u\|_{2,{\frac{2d}
{d+2}}}+\||w|^{\pmo}\na u\|_{2,{\frac{2d}{d+2}}}\\
&+\||F|^{\pmo}\na w\|_{2,{\frac{2d}{d+2}}}+\||w|^{\pmo}\na w\|_{2,{\frac{2d}{d+2}}}+\||u|^{\pmo}
\na F\|_{2,{\frac{2d}{d+2}}}+\||u|^{\pmo}\na w\|_{2,{\frac{2d}{d+2}}}	 
\end{align*}
Then using the nonlinear bound $\||u_1|^{\pmo} \na u_2\|_{2,\frac{2d}{d+2}}\lesssim_d \|u_1\|_{\dot{W}}^{\pmo}\|u_2\|_{\dot{W}}$, assumptions (\ref{condition1})-(\ref{condition3}) and Young's inequality we have
\begin{align}
&\|\na[g(\fuw)-g(u)]\|_{2,{\frac{2d}{d+2}}} \nonumber\\
\lesssim&_d \epsilon^{\p}+\epsilon^{\pmo}\epsilon_0+\epsilon_0^{\pmo}\epsilon+\epsilon^{\pmo}\|w\|_{\dot{W}}+\epsilon_0^{\pmo}\|w\|_{\dot{W}}+\epsilon\|w\|_{\dot{W}}^{\pmo}	\nonumber\\
&+\|w\|_{\dot{W}}^{\p}+\||w|^{\pmo}\na u\|_{2,{\frac{2d}{d+2}}}\nonumber\\
\lesssim&_d \epsilon^{\pmo}+\epsilon_0^{\pmo}\|w\|_{\dot{W}}+\||w|^{\pmo}\na u\|_{2,{\frac{2d}{d+2}}}+\|w\|_{\dot{W}}^{\p} \label{continuity} 
\end{align}
taking $\epsilon_0(d)<1$.

It is tempting to also expand the remaining term in $\dot{W}$ and run a continuity argument, however this will produce the term $\epsilon_0 \|w\|_{\dot{W}}^{\pmo}$ on the right hand side which is an issue for $d>6$ since the power $\pmo$ is less than $1$. We therefore make use of auxiliary spaces $X$ and $Y$ introduced by Visan and Tao in \cite{tao2005stability}. These spaces invoke only $\frac{4}{d+2}<\f{4}{d-2}$ derivatives instead of a whole derivative as in $\dot{W}$ making it possible to run a standard continuity argument in $X$. Once we have a bound on $\|w\|_X$ we can use it in (\ref{continuity}) to bound $\|w\|_{\dot{W}}$.

The spaces $X$ and $Y$ are defined by the following norms:
$$\|f\|_X:=\left(\sum_N N^{\frac{8}{d+2}} \|P_Nf\|_{d+2,\frac{2(d+2)}{d}}^2\right)^\frac{1}{2}$$
$$\|h\|_Y:=\left(\sum_N N^{\frac{8}{d+2}} \|P_Nh\|_{\frac{d+2}{3},\frac{2(d+2)}{d+4}}^2\right)^\frac{1}{2}$$
Note that the space $X$ scales in the same way as $\dot{W}$. Observe also that $\|F\|_X\leq \|F\|_{\dot{R}}$.

To see that $w$ belongs to $X$ we observe the following relation between $X$ and $\dot{W}$:
By Bernstein's inequality we have
\begin{align*}
\|f\|_X\lesssim&_d \left( \sum_N N^2 \|P_N f\|_{d+2,\frac{2d(d+2)}{d^2+2d-4}}^2\right)^\frac{1}{2}
\end{align*}
We now interpolate the $L^{d+2}_tL^{\frac{2d(d+2)}{d^2+2d-4}}_x$ norm between $L^{\frac{2(d+2)}{d-2}}_tL^{\frac{2d(d+2)}{d^2+4}}_x$ and $L^\infty_t L^2_x$ yielding
$$\|P_N  f\|_{d+2,\frac{2d(d+2)}{d^2+2d-4}}\lesssim \|P_N  f\|_{\frac{2(d+2)}{d-2},\frac{2d(d+2)}{d^2+4}}^{\frac{2}{d-2}} \|P_N  f\|_{\infty,2}^{\frac{d-4}{d-2}}$$
and apply H\"{o}lder's inequality for sequences to get
\begin{align}
\|f\|_X\lesssim&_d \left( \sum_N N^2 \|P_N  f\|_{\frac{2(d+2)}{d-2},\frac{2d(d+2)}{d^2+4}}^2\right)^{\frac{2}{2(d-2)}}
\left( \sum_N N^2 \|P_N f\|_{\infty,2}^2\right)^{\frac{d-4}{2(d-2)}}\nonumber\\
\lesssim&_d \left(\sum_N\|P_N f\|_{\dot{W}}^2\right)^{\frac{1}{d-2}} \left( \sum_N \|P_N f\|_{L^{\infty}_t \dot{H}^1_x}^2 \right)^{\frac{d-4}{2(d-2)}}\label{Xbound}
\end{align}
Thus by Remark \ref{dyadic}, $w$ indeed belongs to $X$.

We can use this to bound the remaining term in (\ref{continuity}). Indeed, by the Littlewood-Paley inequality followed by Bernstein's inequality we have
\begin{align*}
\||w|^{\pmo}\na u\|_{2,\frac{2d}{d+2}}\leq & \|w\|_X^{\pmo} \|\na u\|_{\frac{2(d^2-4)}{d^2-12},\frac{2d(d^2-4)}{d^3-2d^2-4d+24}}
\end{align*}
where, since $(\frac{2(d^2-4)}{d^2-12},\frac{2d(d^2-4)}{d^3-2d^2-4d+24})$ is a Strichartz pair, we can use the nonlinear estimate to bound 
\begin{align*}
\|\na u\|_{\frac{2(d^2-4)}{d^2-12},\frac{2d(d^2-4)}{d^3-2d^2-4d+24}}\lesssim_d \|u_0\|_{\ho}+\|u\|_{\dot{W}}^\p\lesssim_d E+\epsilon_0^\p\lesssim_d E
\end{align*}
for $\epsilon_0(E,d)$ sufficiently small.

Substituting this into (\ref{continuity}) and combining the result with (\ref{boundd}) we have
\begin{equation}        \label{bbound}
    \|w\|_{\dot{W}}\lesssim_d \|e^{i(t-t_0)\Delta}w(t_0)\|_{\dot{W}}+\epsilon^{\pmo}+\epsilon_0^{\pmo}\|w\|_{\dot{W}}+E\|w\|_X^{\pmo}+\|w\|_{\dot{W}}^{\p}
\end{equation}

So we must show that $\|w\|_X$ is small. This will require two estimates both proved in \cite{tao2005stability}, see also \cite{exotic}. The first (Lemma 3.2, \cite{tao2005stability}) is a Strichartz-type estimate between $X$ and $Y$:
\begin{equation}	\label{lin}
\left\| \int_{t_0}^t \Utms F(s)ds\right\|_X\lesssim_d \|F\|_Y
\end{equation}
and the second (Lemma 3.3, \cite{tao2005stability}) is the nonlinear estimate 
\begin{equation}	\label{nonlin}
\|g_z(v)u\|_Y\lesssim_d \|v\|_{\dot{W}}^{\pmo}\|u\|_X
\end{equation}
(with a similar estimate for $g_{\bar{z}}$).

Using (\ref{lin}) and the fact that $w$ satisfies equation (\ref{weqn}) we immediately obtain
\begin{equation}	\label{wbound}
\|w\|_X\lesssim_d \|e^{i(t-t_0)\Delta}w(t_0)\|_X+\|g(\fuw)-g(u)\|_Y
\end{equation}
First consider the free evolution term. By (\ref{Xbound}) we see
\begin{align*}
\|e^{i(t-t_0)\Delta} w(t_0)\|_X\lesssim&_d \left(\sum_N\|P_N e^{i(t-t_0)\Delta} w(t_0)\|_{\dot{W}}^2\right)^{\frac{1}{d-2}} \left( \sum_N \|P_N w(t_0)\|_{\dot{H}^1}^2 \right)^{\frac{d-4}{2(d-2)}}\\
\lesssim&_d \epsilon^{\frac{2}{d-2}}E^{\frac{d-4}{d-2}}
\end{align*}

We now move onto the second term in (\ref{wbound}). Using (\ref{gdiff}), Minkowski's inequality and the nonlinear estimate (\ref{nonlin}) we have
\begin{align*}
\|g(\fuw)-g(u)\|_Y\leq& \int_0^1 \|u+\theta(F+w)\|_{\dot{W}}^{\pmo}\|F+w\|_X d\theta\\
\lesssim&_d (\|u\|_{\dot{W}}^{\pmo}+\|F\|_{\dot{W}}^{\pmo}+\|v\|_{\dot{W}}^{\pmo})(\|F\|_X+\|w\|_X)\\
\lesssim&_d (\epsilon_0^{\pmo}+\epsilon^{\pmo}+\|v\|_{\dot{W}}^{\pmo})(\epsilon+\|w\|_X)
\end{align*}
where we used that $\|F\|_{\dot{W}}+\|F\|_X= \|F\|_{\dot{R}}\leq \epsilon$ in the last line.

To bound $\|v\|_{\dot{W}}$, we first use that $v_0$ is close to $u_0$ and that $u$ satisfies the standard NLS (\ref{NLS2}) to bound the linear part:
\begin{align*}
\|\Uto v_0\|_{\dot{W}}\lesssim&_d \|\Uto (v_0-u_0)\|_{\dot{W}}+\|\Uto u_0\|_{\dot{W}}\\
\lesssim&_d \epsilon+\|u\|_{\dot{W}}+\|u\|_{\dot{W}}^{\p}\\
\lesssim&_d \epsilon_0
\end{align*}
We can thus apply the local well-posedness theory to infer that, on the interval $I$, $$\|v\|_{\dot{W}}\lesssim_d \epsilon_0$$

Returning to (\ref{wbound}) we thus have 
\begin{align*}
\|w\|_X\lesssim&_d \epsilon^{\frac{2}{d-2}}E^{\frac{d-4}{d-2}}+(\epsilon_0^{\pmo}+\epsilon^{\pmo}+\epsilon_0^{\pmo})(\epsilon+\|w\|_X)\\
\lesssim&_d \epsilon^{\frac{2}{d-2}}E^{\frac{d-4}{d-2}}+\epsilon+\epsilon_0^{\pmo}\|w\|_X
\end{align*}
Thus choosing $\epsilon_0$ sufficiently small depending on $d$ and $E$ we conclude $$\|w\|_X\lesssim_d \epsilon^{\frac{2}{d-2}}E^{\frac{d-4}{d-2}}$$

Now that we have bounded $\|w\|_X$, we can return to (\ref{bbound}) to bound $\|w\|_{\dot{W}}$. We have
\begin{align*}
\|w\|_{\dot{W}}\lesssim&_d \|\Uto w(t_0)\|_{\dot{W}}+\epsilon^{\pmo}+\epsilon_0^{\pmo}\|w\|_{\dot{W}}+E\|w\|_X^{\pmo}+\|w\|_{\dot{W}}^{\p}\\
\lesssim&_d \epsilon+\epsilon^{\pmo}+\epsilon_0^{\pmo}\|w\|_{\dot{W}}+E(\epsilon^{\frac{2}{d-2}}E^{\frac{d-4}{d-2}})^{\pmo}+\|w\|_{\dot{W}}^{\p}\\
\lesssim&_d \epsilon^{\frac{7}{(d-2)^2}}+\frac{1}{2}\|w\|_{\dot{W}}+\|w\|_{\dot{W}}^{\p}
\end{align*}
for $\epsilon_0(E,d)$ sufficiently small. 
The result (\ref{result1}) now follows from a standard continuity argument.

Lastly, we show (\ref{difference_bound}).
By (\ref{continuity})
\begin{align*}
\|\na [g(F+v)-g(u)]\|_{2,\frac{2d}{d+2}}\lesssim&_d \epsilon^{\pmo}+\epsilon_0^{\pmo}\|w\|_{\dot{W}}+\|u\|_{\dot{W}}\|w\|_{\dot{W}}^{\pmo}+\|w\|_{\dot{W}}^{\p}\\
\lesssim&_d \epsilon^{\pmo}+\|w\|_{\dot{W}}+\|w\|_{\dot{W}}^{\pmo}+\|w\|_{\dot{W}}^{\p}
\end{align*}
using the bounds assumed on $\|u\|_{\dot{W}}$ and $\|F\|_{\dot{W}}$. Substituting in the bound just obtained for $\|w\|_{\dot{W}}$ gives the result.
\end{proof}

We now extend this result by removing the smallness assumption on $u$ in the case when $u$ and $v$ have the same initial data.
\begin{lemma}(Long-time perturbations)	\label{longtime}
Let $I{\subset} \R$ be a compact time interval with $t_0\in I$ and let $v_0\in H^1(\R^d)$ with $$E(v_0)\leq E$$
 Let $u\in C(I,\dot{H}^1(\R^d))$ be the solution to the defocusing NLS (\ref{NLS2}) with initial data $u(t_0)=v_0$ and $$\|u\|_{\WI}\leq K$$ for some $K>0$. Then there exists $\epsilon_1(E,K,d) \in (0,1)$ such that for any $F\in R(I)$ sufficiently small in the sense that $$\|F\|_{\dot{R}(I)}\leq \epsilon_1,$$ there exists a unique solution $v:I\times \R^{d}\rightarrow \mathbb{C}$ to the forced equation (\ref{dFNLS}) with initial data $v(t_0)=v_0$ and it holds 
\begin{equation}
\|v-u\|_{\WI}\leq 1		\label{result_long}
\end{equation}
\end{lemma}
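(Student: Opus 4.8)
The standard strategy is to subdivide the interval $I$ into finitely many subintervals on each of which the Tao--Visan-type $\dot W$-norm of $u$ is small enough to apply the short-time perturbation result, Lemma \ref{shorttime}, and then iterate. First I would use the time-divisibility of the $\dot W$-norm (Section \ref{function_spaces}, with $\al(\dot W)=\f{2(d+2)}{d-2}$) together with the bound $\|u\|_{\WI}\leq K$ to partition $I=\bigcup_{j=1}^J I_j$ into $J=J(K,d)$ consecutive intervals with disjoint interiors on which $\|u\|_{\dot W(I_j)}\leq \eps_0(E',d)$, where $\eps_0$ is the threshold from Lemma \ref{shorttime} and $E'$ is a (larger, but still finite and controlled) energy level that accounts for the growth of $\|v(t_j)\|_{\dot H^1}$ across the iteration. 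One must also simultaneously refine the partition so that $\|F\|_{\dot R(I_j)}$ is small; this is possible because $F\in\dot R(I)$ and the $\dot R$-norm is likewise time-divisible with $\al(\dot R)=d+2$.

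Next I would run the induction on $j$. On the first interval $I_1$ the hypotheses of Lemma \ref{shorttime} hold with $v_0-u_0=0$ (so condition (\ref{condition2}) is trivially satisfied), provided $\|F\|_{\dot R(I_1)}\leq\eps$; the lemma produces $v$ on $I_1$ with $\|v-u\|_{\dot W(I_1)}\lesssim_d \eps^{7/(d-2)^2}$ and, crucially, the control (\ref{difference_bound}) on $\|\na[g(F+v)-g(u)]\|_{2,\frac{2d}{d+2}[I_1]}$. To pass to $I_2$ I need to (i) bound $\|v(t_1)-u(t_1)\|_{\dot H^1}$, which follows from the Duhamel formula and (\ref{difference_bound}) via the dual Strichartz estimate (\ref{dual}), giving a bound $\lesssim_d\eps^{28/(d-2)^3}$; and (ii) bound $\left(\sum_N \|P_N e^{i(t-t_1)\Delta}(u(t_1)-v(t_1))\|_{\dot W(I_2)}^2\right)^{1/2}$, for which I apply Strichartz at dyadic scales to the same difference and again use (\ref{difference_bound}) — this is where having the dyadic (Besov) version of the estimate, as in Remark \ref{dyadic}, is essential. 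Both are $\leq\eps$ once $\eps$ is chosen small relative to $J$, $E$, $K$, $d$. Also $\|u(t_1)\|_{\dot H^1}$, hence $E(u(t_1))=E(v_0)$ (conserved), stays $\leq E$, and $\|v(t_1)\|_{\dot H^1}\leq E+O(\eps)\leq E'$, so Lemma \ref{shorttime} applies on $I_2$ with the same $\eps_0(E',d)$. Iterating over all $J$ intervals, the accumulated difference satisfies $\|v-u\|_{\dot W(I)}\leq \sum_{j=1}^J\|v-u\|_{\dot W(I_j)}\lesssim_d J\,\eps^{7/(d-2)^2}\leq 1$ after choosing $\eps_1=\eps_1(E,K,d)$ small enough, which gives (\ref{result_long}). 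Uniqueness of $v$ on $I$ follows from the local well-posedness theory of Theorem \ref{wellposedness}.

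The main obstacle is controlling the error propagation through the iteration: each application of Lemma \ref{shorttime} is made at an energy level that has grown from the previous step, and one must verify that after $J(K,d)$ steps the energy is still bounded by a fixed $E'$ (so that a single $\eps_0$ works throughout) and that the $\dot H^1$-distance between $u$ and $v$ at each breakpoint, propagated forward under the free evolution, stays below the threshold $\eps$ in the dyadically-summed $\dot W$-norm of (\ref{condition2}). The quantitative gain in (\ref{difference_bound}) — a positive power $\eps^{28/(d-2)^3}$ — is exactly what makes this closable: choosing $\eps$ small depending on $J$, $E$, $K$, $d$ absorbs the loss of a factor $J$ at each stage. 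A secondary technical point is that Lemma \ref{shorttime} requires $u_0,v_0\in H^1$ with $\dot H^1$-norms bounded by $E$, so one should track the full $H^1$ (not just $\dot H^1$) norm of $v$ along the iteration, using $L^2$-boundedness of the free evolution and the $L^2$-Strichartz control of the Duhamel term, which is routine.
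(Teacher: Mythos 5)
Your proposal is correct and follows essentially the same route as the paper: partition $I$ so that $\|u\|_{\dot{W}(I_j)}$ is below the short-time threshold, propagate the data mismatch at each breakpoint through the dual Strichartz estimate and its dyadic version using (\ref{difference_bound}), and iterate. The only point handled more carefully in the paper is the error accumulation: since the exponent $\alpha=\frac{28}{(d-2)^3}$ is less than $1$ the error \emph{amplifies} at each step (so the final bound is not simply $J\eps^{7/(d-2)^2}$ but is governed by a recursive sequence $\eps(j+1)=C_d\sum_{k\leq j}\eps(k)^{\alpha}$, controlled by taking $\eps_1$ small enough that $\eps_1^{\alpha^{J}}$ is still below threshold), which is exactly the obstacle you identify and correctly resolve in your closing paragraph.
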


\begin{proof}
Without loss of generality assume $t_0=\inf I$. As in the previous proposition it suffices to prove the bound as an a priori estimate.
In order to make use of the short-time perturbation theory, we will induct over intervals on which the $\WI$-norm of $u$ is small. To this end we partition $I$ into consecutive intervals with disjoint interiors $(I_j)_{j=1}^J$ such that
\begin{equation}		\label{usmall}
\|u\|_{\dot{W}(I_j)}\leq \epsilon_0(2(2E)^{\frac{1}{2}},d)
\end{equation}
for each $j=1,...,J$, where $\epsilon_0$ is as in Lemma \ref{shorttime}. By the time-divisibility properties of $\dot{W}$ we are able to do this with 
\begin{equation}
J\lesssim \left(\frac{K}{\epsilon_0(2(2E)^{\frac{1}{2}},d)}\right)^{\frac{2(d+2)}{d-2}}
\end{equation}

Denote $I_j=[t_{j-1},t_j]$. We must check that the conditions of Lemma \ref{shorttime} are satisfied on this interval.

We first make two observations. Using Strichartz's inequality (\ref{dual}) we have
\begin{align}
\|v(t_{j})-u(t_j) \|_{\ho}\simeq& \left \| \int_{t_0}^{t_j}e^{i(t_j-s)\Delta}\na [g(F+v)-g(u)](s)ds \right\|_{L^2(\R^d)}		\nonumber\\
\leq& A_{d,1} \|\na[g(F+v)-g(u)]\|_{2,\f{2d}{d+2} [t_0,t_j]}		\label{hone}
\end{align}
\begin{sloppypar}
Secondly, by the Strichartz estimates (\ref{hom}) and (\ref{dual}) followed by the embedding (\ref{rtiq}) we obtain
\end{sloppypar}
\begin{align}
&\left(\sum_N\|P_Ne^{i(t-t_j)\Delta}(v(t_j) -u(t_j) )\|_{\dot{W}(I_{j+1})}^2\right)^\frac{1}{2}		\nonumber\\
&\simeq \left( \sum_N N^2 \left\| \int_{t_0}^{t_j} \Utms P_N[g(F+v)-g(u)](s) ds \right\|_{\frac{2(d+2)}{d-2},\frac{2d(d+2)}{d^2+4}[I_{j+1}]}^2\right)^\frac{1}{2}		\nonumber\\
&\leq A_{d,2} \left(\sum_N N^2 \|P_N[g(F+v)-g(u)](s)\|_{2,\f{2d}{d+2}[t_0,t_j]}^2\right)^\frac{1}{2}		\nonumber\\
&\leq A'_{d,2} \|\na [g(F+v)-g(u)]\|_{2,\f{2d}{d+2}[t_0,t_j]}		\label{w}
\end{align}
Set $A_d:=\max\{1,A_{d,1},A'_{d,2}\}$.

We now prove a technical claim that will be useful for the inductive step. In the rest of this proof we denote $\alpha:=\frac{28}{(d-2)^3}$ and $C_d:=A_d C_{d,1}\geq 1$, with $C_{d,1}$ the constant from Lemma \ref{shorttime}.
\begin{claim}
We may take $\epsilon_1(E,K,d)>0$ sufficiently small such that the following holds:
Define a sequence $(\epsilon(j))_{j=1}^{J+1}$ by 
\begin{align*}
\epsilon(1)=\epsilon_1(E,K,d), &&
\epsilon(j+1)=C_d \sum_{k=1}^j \epsilon(k)^{\alpha} \text{ for } 1\leq j \leq J
\end{align*}
Then for all $1\leq j\leq J+1$ it holds
\begin{equation*}
\epsilon_1\leq\epsilon(j)\leq (2C_d)^{ \sum_{k=0}^{j-2}  \alpha ^k } \epsilon_1^{\alpha^{j-1}}< \min\{\epsilon_0(2(2E)^{\frac{1}{2}},d),(2E)^{\frac{1}{2}}\}
\end{equation*}
\end{claim}

\begin{proof}[Proof of claim.]
The cases $j=1$, $2$ are easily verified (since $\alpha<1$).
Suppose that the claim holds for all $1\leq j\leq k$ for some $k\leq J$. Then by definition
\begin{align*}
\epsilon(k+1)=\epsilon(k)+C_d \epsilon(k)^\alpha\leq 2C_d \epsilon(k)^\alpha\leq (2C_d)^{\sum_{k'=0}^{k-1}\alpha^{k'}} \epsilon_1^{\alpha^k}
\end{align*}
as required. That $\epsilon(k)\geq \epsilon_1$ is clear since the sequence is increasing, and for $1\leq k \leq J$,
\begin{align*}
\epsilon(k+1)\leq& (2C_d)^{\sum_{k'=0}^{J-1}\alpha^{k'}} \epsilon_1^{\alpha^{J}}< \min\{\epsilon_0(2(2E)^{\frac{1}{2}},d),(2E)^{\frac{1}{2}}\}
\end{align*}
for $\epsilon_1$ sufficiently small depending on $E$, $K$ and $d$.
\end{proof}

We now prove a second claim in which we reduce the long time perturbation result to the short time result on the intervals $I_j$. In the rest of this proof we will take $\epsilon_1(E,K,d)$ as in the above claim.
\begin{claim}
Under the assumptions of the lemma, for all $1\leq j\leq J$ it holds
\begin{gather*}
\|v(t_{j-1}) \|_{\ho}\leq2(2E)^{\frac{1}{2}}\\
\left(\sum_N\|P_Ne^{i(t-t_{j-1})\Delta}(\ujmo-v(t_{j-1}) )\|_{\dot{W}(I_j)}^2\right)^{\frac{1}{2}} \leq \epsilon(j)< \epsilon_0(2(2E)^{\frac{1}{2}},d)\\
\|\na [g(F+v)-g(u)]\|_{2,\f{2d}{d+2}[I_j]}\leq C_{d,1}{\epsilon(j)}^\alpha\\
\|u-v\|_{\dot{W}(I_j)}\leq C_{d,1} \epsilon(j)^{\frac{7}{(d-2)^2}}
\end{gather*}
for $\epsilon(j)$ as in the previous claim.
\end{claim}
\begin{proof}[Proof of claim]
Recall that for each $j=1,...,J$ it holds
\begin{gather*}
\|u\|_{\dot{W}(I_j)}\leq \epsilon_0(2(2E)^{\frac{1}{2}},d)\\
\|F\|_{\dot{R}(I_j)}\leq \epsilon_1< \epsilon_0(2(2E)^{\frac{1}{2}},d)\\
\|u(t_{j-1})\|_{\dot{H}^1}\leq (2E)^{\frac{1}{2}}
\end{gather*}
where we used that $u$ has conserved energy.

For $j=1$, we have $u(t_0)=v(t_0)=v_0$ so we can immediately apply Lemma \ref{shorttime} to obtain (using $\epsilon(1)=\epsilon_1$) $$\|\na [g(F+v)-g(u)]\|_{\f{2d}{d+2}[I_1]}\leq C_{d,1}\epsilon(1)^\alpha$$ and $$\|u-v\|_{\dot{W}(I_1)}\leq C_{d,1} \epsilon(1)^{\frac{7}{(d-2)^2}}$$
Now suppose that the claim holds for all $1\leq j\leq k$ for some $k\leq J-1$. Then by (\ref{hone}) we have
\begin{align*}
\|u(t_k) -v(t_k) \|_{\ho}\leq& A_d   \sum_{k'=1}^k\|\na [g(F+v)-g(u)]\|_{2,\f{2d}{d+2}[I_{k'}]}\\
\leq& A_d\sum_{k'=1}^k C_{d,1} \epsilon(k')^\alpha=\epsilon(k+1)< (2E)^{\frac{1}{2}}
\end{align*}
and so $\|v(t_k)\|_{\dot{H}^1(\R^d)}< 2(2E)^\frac{1}{2}$.
Similarly using (\ref{w}) we see that
\begin{align*}
\left(\sum_N\|P_Ne^{i(t-t_k)\Delta}(v(t_k) -u(t_k) )\|_{\dot{W}(I_{k+1})}^2\right)^\frac{1}{2}\leq& A_d \sum_{k'=1}^k\|\na [g(F+v)-g(u)]\|_{2,\f{2d}{d+2}[I_{k'}]}\\
\leq& \epsilon(k+1)< \epsilon_0(2(2E)^{\frac{1}{2}},d)
\end{align*}
Thus since also $\|F\|_{\dot{R}(I_{k+1})}\leq \epsilon_1 \leq \epsilon(k+1)$, we can apply Lemma \ref{shorttime} on $I_{k+1}$ to obtain
\begin{equation*}
\|u-v\|_{\dot{W}(I_{k+1})}\leq C_{d,1} \epsilon(k+1)^{\frac{7}{(d-2)^2}}
\end{equation*}
and
\begin{equation*}
\|\na [g(F+v)-g(u)]\|_{2,\f{2d}{d+2}[I_{k+1}]}\leq C_{d,1} \epsilon(k+1)^\alpha
\end{equation*}
This completes the proof of the claim.
\end{proof}

We now sum the bounds over all the sub-intervals and use that $\alpha< \frac{7}{(d-2)^2}$ to obtain
\begin{align*}
\|u-v\|_{\dot{W}(I)}\leq C_{d,1} \sum_{j=1}^J \epsilon(j)^{\alpha}\leq \epsilon(J+1)< 1
\end{align*}
\end{proof}

Using the perturbation theory developed we are now able to prove the conditional scattering result.
\begin{proof}[Proof of Theorem \ref{conditionalscattering}]
By the local well-posedness theory, it remains to prove that $$\|v\|_{\dot{W}(I^*)}\leq C(M,\|F\|_{\dot{R}(\R)},d)$$
Consider first $[t_0,T_+)$. Partition $[t_0,T_+)$ into $J$ consecutive intervals $I_j$ such that $$\|F\|_{\dot{R}(I_j)}\leq \epsilon_1(M,K(M),d)$$ where $K$ is the non-decreasing function from Lemma \ref{nondec} and $\epsilon_1$ is the constant from Lemma \ref{longtime}. Due to the time divisibility of the $\dot{R}$-norm, we can do this with $$J\lesssim \left( \frac{\|F\|_{\dot{R}(\R)}}{\epsilon_1(M,K(M),d)}\right)^{d+2}$$ 

Denote $I_j=[t_{j-1},t_j]$ for $1\leq j \leq J$. On each $I_j$ we compare $v$ with the solution $u_j$ to the usual defocusing NLS (\ref{NLS2}) with initial data $u(t_{j-1})=v(t_{j-1})$, satisfying $E(u(t_{j-1}))\leq M$ by assumption.
By Lemma \ref{nondec} we know that such a solution $u_j$ exists globally in time and satisfies $$\|u_j\|_{\dot{W}(I_j)}\leq \|u_j\|_{\dot{W}(\R)}\leq K(M)$$
We can therefore apply Lemma \ref{longtime} on each $I_j$ to see that $v$ satisfies $$\|v\|_{\dot{W}(I_j)}\leq \|v-u_j\|_{\dot{W}(I_j)}+\|u_j\|_{\dot{W}(I_j)}\leq K(M)+1$$
Summing the estimates over the intervals $I_j$ and arguing in the same way on $(T_-,t_0]$ implies the result.
\end{proof}

\section{Energy bound}		\label{energybound}
In this section we prove that the solution $v$ to the forced NLS \eqref{dFNLS} does indeed satisfy a uniform in time energy bound on its maximal interval of existence $I^*=(T_-,T_+)$, under assumptions on $F$ which we shall prove to hold almost surely for $F=\Ut \fomega$ in the next section. The precise result is the following.

\begin{theorem}\label{thm334}
Let $v_0\in H^1(\R^d)$, $t_0=0$. Denote by $v\in C^0_t H^1_x\medcap W(I^*)$ the unique solution to \eqref{dFNLS} obtained in Theorem \ref{wellposedness}, for a forcing term $F\in R\cap L^\infty_t L^{\f{2d}{d-2}}_x(\R)$ which solves the linear Schr\"{o}dinger equation $(i\dd_t+\Delta)F=0$ with $L^2$ initial data and satisfies
\begin{equation*}
F\in L^{\f{1}{\sigma }}_t L^{\f{2d}{d-4\sigma }}_x
\end{equation*}
\begin{equation*}
\na F\in L^2_tL^{\frac{4d-2}{2d-3-\sigma }}_x \medcap L^2_t L_x^{\f{2d(2d-1)}{2d^2-7d+4+d\sigma }}\medcap L^1_t L^\f{2d}{d-4}_x
\medcap L_t^{\f{d-2}{d-2-4\sigma }}L^{\f{2d(d-2)}{d(d-6)+16\sigma }}_x(\R)	
\end{equation*}
for some $\sigma (d)$ sufficiently small. 
Then it holds
\begin{small}
\begin{align}\label{big_eqn00}
\sup_{t\in I^*} E(v(t))&\leq (1+E(v_0)+\|F\|_{\infty,\pp[\R]}^{\f{2(d-2)}{d-6}}+\|F\|_{\infty,\f{2d}{d-2}[\R]}^{\f{2(d+2)}{d-2}}) \nonumber\\
&\quad\cdot\exp (C_d(\|F\|_{\f{1}{\sigma },\f{2d}{d-4\sigma }[\R]}^{\f{1}{\sigma }}+\|\na F\|^2_{2,\f{4d-2}{2d-3-\sigma }[\R]}+\|\na F\|^2_{2,\f{2d(2d-1)}{2d^2-7d+4+d\sigma }[\R]}\nonumber\\
&\quad\quad\quad\quad\quad\quad\quad\quad\quad\quad\quad
+\|\na F\|_{1,\frac{2d}{d-4}[\R]}+\|\na F\|_{\f{d-2}{d-2-4\sigma },\f{2d(d-2)}{d(d-6)+16\sigma }[\R]}^\f{d-2}{d-2-4\sigma }))
\end{align}
\end{small}
for some $C_d>0$.
\end{theorem}

This will follow from an analogous theorem for the regularised solutions $\vn$ to
\begin{equation}\label{FNLSn3}
\begin{cases}
(i\dd_t+\Delta)v_n=g_n(F_n+v_n)\\
v_n(0)=v_{n,0}\in H^2(\R^d)
\end{cases}
\end{equation}
with $g_n(u)=u\vp_n'(|u|^2)$, $F_n=P_{\leq n}F$ and $v_{n,0}=P_{\leq n}v_0$, as in Section \ref{regularisationsection}.
\begin{theorem}		\label{energyboundthm}
Suppose that $F$ satisfies the assumptions of Theorem \ref{thm334}. 
Let $\vn$ be the unique global solution to \eqref{FNLSn3} in $C(\R,H^2(\R^d))\medcap C^1(\R,L^2(\R^d))$. Then it holds
\begin{sloppypar}
\end{sloppypar}
\vspace{-0.9em}
\begin{small}
\begin{align}
\sup_{t\in \R} E_n(v_n(t))&\leq (1+E(v_{n,0})+\|F\|_{\infty,\pp[\R]}^{\f{2(d-2)}{d-6}}+\|F\|_{\infty,\f{2d}{d-2}[\R]}^{\f{2(d+2)}{d-2}}) \nonumber\\
&\quad\cdot\exp (C_d(\|F\|_{\f{1}{\sigma },\f{2d}{d-4\sigma }[\R]}^{\f{1}{\sigma }}+\|\na F\|^2_{2,\f{4d-2}{2d-3-\sigma }[\R]}+\|\na F\|^2_{2,\f{2d(2d-1)}{2d^2-7d+4+d\sigma }[\R]}\nonumber\\
&\quad\quad\quad\quad\quad\quad\quad\quad\quad\quad\quad
+\|\na F\|_{1,\frac{2d}{d-4}[\R]}+\|\na F\|_{\f{d-2}{d-2-4\sigma },\f{2d(d-2)}{d(d-6)+16\sigma }[\R]}^\f{d-2}{d-2-4\sigma }))\label{big_eqn22}
\end{align}
\end{small}
for some $C_d>0$.
Here $$E_n(\vn(t)):=\frac{1}{2}\intr|\na \vn|^2 dx+\frac{1}{2}\intr\vp_n(|\vn|^2)dx$$
\end{theorem}

Before proving this theorem, we show how it can be used to deduce Theorem \ref{thm334}.
\begin{proof}[Proof of Theorem \ref{thm334} given Theorem \ref{energyboundthm}.]
Fix a compact subinterval $0\in I\subset I^*$. Observe that $E(v_{n,0})\rightarrow E(v_0)$. Therefore, denoting $M_n$ the quantity on the right hand side of inequality \eqref{big_eqn22} and $M$ the right hand side of \eqref{big_eqn00}, we have $M_n\rightarrow M$.

Consider the sequence $v_{n,n}:=v_n\mathds{1}_{|\vn|\leq n}$. 
For every $t\in I$ we have
$$\|v_{n,n}(t)\|_{L^{p+1}(\R^d)}^{p+1}=\int_{|\vn|\leq n}|\vn(t)|^{p+1}dx\leq\f{p+1}{2}\intr\vp_n(|\vn(t)|^2)dx\leq(p+1) \sup_{n'} M_{n'}$$
and so for each $t\in I$ there exists a subsequence $(v_{n_j,n_j})_j$ weakly converging to $v$ in $L^{p+1}(\R^d)$, from which we deduce
$$\|v(t)\|_{L^{p+1}(\R^d)}^{p+1}\leq \liminf_j\f{p+1}{2}\intr\vp_{n_j}(|v_{n_j}(t)|^2)dx$$
for every $t\in I$. Similarly, we have $\|v(t)\|_{\dot{H}^1(\R^d)}^2\leq\liminf_j \|v_{n_j}\|_{\dot{H}^1}^2$, so
$$
E(v(t))\leq\liminf_jE_{n_j}(v_{n_j}(t))\leq\liminf_jM_{n_j}=M
$$
\end{proof}

We now prove Theorem \ref{energyboundthm}. The idea of the proof is to work on small intervals on which $F$ is small and use a bootstrap argument to control the energy increment there. By only placing $F$ into spaces with finite time exponents we are able to iterate this finitely many times to obtain a bound over the whole interval. In early papers on this topic \cite{killip2019almost,dodson2019almost,dodson2020almost}, a double bootstrap method was used, simultaneously controlling the solution in weighted $L^p$ spaces via Morawetz inequalities. Thanks to a randomised $L^1_t$-estimate introduced by Spitz in \cite{spitz2021almost} (see Section \ref{l1t}), this is not necessary here and we can directly bound the energy increment by placing $F$ into spaces of low time-integrability.
 
\begin{proof}[Proof of Theorem \ref{energyboundthm}.]
In this proof we will often use the notation $p$ instead of $\f{d+2}{d-2}$, so any implicit constants depending on $p$ in fact depend only on $d$.
We will show the bound holds on the compact interval $[0,T]$ for any $T>0$. Since the bound does not depend on $T$, this clearly extends to $[0,\infty)$, and the argument in the reverse time direction is analogous.

Define the norm
\begin{multline*}
\|F\|_{Z}:=\|F\|_{\f{1}{\sigma },\f{2d}{d-4\sigma }}+\|\na F\|_{2,\f{4d-2}{2d-3-\sigma }}+\|\na F\|_{2,\f{2d(2d-1)}{2d^2-7d+4+d\sigma }}\\
+\|\na F\|_{1,\frac{2d}{d-4}}+\|\na F\|_{\f{d-2}{d-2-4\sigma },\f{2d(d-2)}{d(d-6)+16\sigma }}
\end{multline*}
on any time interval.
Partition $[0,T]$ into $J$ consecutive subintervals $I_j:=[t_{j-1},t_j]$, $j=1,\ldots,J$ such that $$\|F\|_{Z[I_j]}\leq \eta$$ for each $j=1,\ldots,J$ and some $\eta<1$ to be determined which depends only on the dimension $d$. Note that by the time-divisibility properties of the $Z$ norm, it is possible to do this with 
\begin{align}
J&\lesssim_d \|F\|_{\f{1}{\sigma },\f{2d}{d-4\sigma }[\R]}^{\f{1}{\sigma }}+\|\na F\|^2_{2,\f{4d-2}{2d-3-\sigma }[\R]}\nonumber\\
&\quad+\|\na F\|^2_{2,\f{2d(2d-1)}{2d^2-7d+4+d\sigma }[\R]}+\|\na F\|_{1,\frac{2d}{d-4}[\R]}+\|\na F\|_{\f{d-2}{d-2-4\sigma },\f{2d(d-2)}{d(d-6)+16\sigma }[\R]}^\f{d-2}{d-2-4\sigma }\label{Jbound}
\end{align}
For each $j=1,\ldots,J$ define
$$A_{t_{j-1}}(t_j):=1+\sup_{t\in[t_{j-1},t_{j}]}E_n(v_n(t))$$
In the following calculations all spacetime norms above are taken over $[t_{j-1},t]\times \R^d$. Since $\vn\in C^1(\R,L^2(\R^d))\medcap C(\R,H^2(\R^d))$ one may differentiate $E_n(\vn(t))$ to obtain
$$
\dd_t E_n(\vn(t))=-\re\int_{\R^d}\dd_t\bar{v}_n(\Delta v_n -\vn\varphi_n'(|\vn|^2))dx
$$
which is well-defined since $\dd_t \vn$, $\Delta \vn\in L^2(\R^d)$.
Integrating this over $[t_{j-1},t]$ and performing a calculation similar to that in \cite{killip2019almost,dodson2019almost} (see Appendix \ref{appendix0} for details), we obtain
\begin{align}
&|E_n(v_n(t))-E_n(v_n(\tjmo))|\nonumber\\
&\leq\frac{1}{2}\sup_{[t_{j-1},t]}\|\vp_n(|\fn+\vn|^2)-\vp_n(|\vn|^2)-\vp_n(|\fn|^2)\|_{L^1(\rd)}\label{oneone}\\
&\quad+\|\na\overline{\fn}\cdot\na(g_n(\fn+\vn)-g_n(\fn))\|_{1,1}\label{twotwo}
\end{align}

First consider \eqref{oneone}. Observe that 
$$\left|\vp_n(|\fn+\vn|^2)-\vp_n(|\vn|^2)-\vp_n(|\fn|^2)\right|\lesssim_p|F_n|^p|v_n|+|F_n||v_n|^p$$
uniformly in $n$. Hence by Young's inequality we have
\begin{align*}
&\left\|\vp_n(|\fn+\vn|^2)-\vp_n(|\vn|^2)-\vp_n(|\fn|^2)\right\|_{\infty,1}\\
&\lesssim_d \dl \|v_n\|_{\infty,p+1}^2+C_{\dl,d} \|F_n\|_{\infty,p+1}^{\f{2}{2-p}}+C_{\dl,d} \|F_n\|_{\infty,p+1}^{2p}\\
&\lesssim_d  \dl A_{\tjmo}(t_j )+C_{\dl,d}\|F_n\|_{\infty,p+1}^{\f{2}{2-p}}+C_{\dl,d} \|F_n\|_{\infty,p+1}^{2p}
\end{align*}
for any $\dl>0$, since $\|\vn\|_{\infty,p+1}^2\lesssim\|\vn\|_{L^\infty_t\dot{H}^1_x}^2$.

We now turn to \eqref{twotwo}. This time we use \eqref{gderiv} to bound
\begin{align*}
\eqref{twotwo}\lesssim&_d \||\vn|^{p-1}|\na F_n|^2\|_{1,1}+\||\vn|^{p-1}|\na \vn||\na \fn|\|_{1,1}+\||\fn|^{p-1}|\na \fn||\na\vn|\|_{1,1}
\end{align*}
We can control these terms as follows, using that $A_{t_{j-1}}(t_j)\geq 1$:
\begin{align*}
\||{\vn}|^{p-1}|\na {\fn}|^2\|_{1,1}\leq& \|{\vn}\|_{\infty,p+1}^{p-1}\|\na {\fn}\|_{2,\f{4d-2}{2d-3-\sigma }}\|\na {\fn}\|_{2,\f{2d(2d-1)}{2d^2-7d+4+d\sigma }}\\
\lesssim&_d A_{t_{j-1}}(t_j)\|\na {\fn}\|_{2,\f{4d-2}{2d-3-\sigma }}\|\na {\fn}\|_{2,\f{2d(2d-1)}{2d^2-7d+4+d\sigma }}\\
\||{\vn}|^{p-1} |\na {\vn}| |\na {\fn}|\|_{1,1}\leq& \|{\vn}\|_{\infty,p+1}^{p-1}\|\na {\vn}\|_{\infty,2}\|\na {\fn}\|_{1,\frac{2d}{d-4}}\\
\lesssim&_d A_{t_{j-1}}(t_{j})\|\na {\fn}\|_{1,\frac{2d}{d-4}}\\
\||{\fn}|^{p-1}|\na \fn||\na \vn|\|_{1,1}\leq& \|\na {\vn}\|_{\infty,2}\|{\fn}\|_{\f{1}{\sigma },\f{2d}{d-4\sigma }}^{p-1}\|\na {\fn}\|_{\f{d-2}{d-2-4\sigma },\frac{2d(d-2)}{d(d-6)+16\sigma }}\\
\leq& A_{t_{j-1}}(t_{j})\|{\fn}\|_{\f{1}{\sigma },\f{2d}{d-4\sigma }}^{p-1}\|\na {\fn}\|_{\f{d-2}{d-2-4\sigma },\frac{2d(d-2)}{d(d-6)+16\sigma }}
\end{align*}

Noting that the spaces into which $F_n$ has been placed here are exactly those which make up the $Z$-norm, we can bound each term by $C_d A_{t_{j-1}}(t_j)\eta$, and so by \eqref{oneone}-\eqref{twotwo} it holds
\begin{multline*}
A_{t_{j-1}}(t_j)\lesssim_d 1+E_n(v_n(t_{j-1}))+\dl A_{\tjmo}(t_j )+C_{\dl,d}\|F_n\|_{\infty,p+1[t_{j-1},t_j]}^{\f{2}{2-p}}\\
+C_{\dl,d} \|F_n\|_{\infty,p+1[t_{j-1},t_j]}^{2p}+\eta A_{t_{j-1}}(t_j)
\end{multline*}
Choosing $\dl(d)$ and $\eta(d)$ sufficiently small and using that $\|\fn\|_{a,b}\lesssim\|F\|_{a,b}$ for $1\leq a,b\leq \infty$ (and likewise for $\na \fn$), we thus have
\begin{equation*}
A_{t_{j-1}}(t_j)\leq C_d^*(1+E_n(v_n(t_{j-1}))+\|F\|_{\infty,p+1[\R]}^{\f{2}{2-p}}+\|F\|_{\infty,p+1[\R]}^{2p})
\end{equation*}
for some constant $C_d^*>1$.

Iterating the results on the consecutive intervals $(I_j)_{j=1}^J$, we obtain
$$A_{t_{j-1}}(t_j)\leq(2C_d^*)^{j}(1+E_n(v_n(0))+\|F\|_{\infty,p+1[\R]}^{\f{2}{2-p}}+\|F\|_{\infty,p+1[\R]}^{2p})$$
for all $j=1,\ldots,J$, from which
\begin{equation*}
A_{0}(T)\leq (2C_d^*)^J(1+E(v_{n,0})+\|F\|_{\infty,p+1[\R]}^{\f{2}{2-p}}+\|F\|_{\infty,p+1[\R]}^{2p})
\end{equation*}
where we used that $E_n(v_{n,0})\leq E(v_{n,0})$. Combining this with \eqref{Jbound} yields the result.
\end{proof}

\section{Almost sure bounds for the forcing term}	\label{bounds_section}\label{probsection}
In this section we show that the randomised linear evolution $F^\om:=\Ut \fomega$ almost surely satisfies the conditions required for well-posedness and scattering provided the initial data $f$ lies in a Sobolev space of sufficiently high regularity. In particular, we prove the following theorem:
\begin{theorem}     \label{random_thm}
Let $\max\{\frac{4d-1}{3(2d-1)},\f{d^2+6d-4}{(2d-1)(d+2)}\}< s<1$ and $f\in H^s(\R^d)$. Let $\fomega$ denote the randomisation of $f$ as in \eqref{rand_f} and $F^\om:=\Ut \fomega$. Then when $\sigma (d)$ is sufficiently small we have
\begin{equation*}
F^\om\in L^\infty_tL^{\pp}_x \medcap L^{\f{1}{\sigma }}_t L^{\f{2d}{d-4\sigma }}_x \medcap R(\R)
\end{equation*}
\begin{equation*}
\na F^\om\in L^2_tL^{\frac{4d-2}{2d-3-\sigma }}_x \medcap L^2_t L_x^{\f{2d(2d-1)}{2d^2-7d+4+d\sigma }}\medcap L^1_t L^\f{2d}{d-4}_x
\medcap L_t^{\f{d-2}{d-2-4\sigma }}L^{\f{2d(d-2)}{d(d-6)+16\sigma }}_x(\R)	
\end{equation*}
for almost every $\om\in\Om$.
\end{theorem}
When combined with the results of the previous section, this completes the proof of Theorem \ref{thm1}.

The proof of the bounds in the above theorem is split into subsections according to the method used to obtain the almost sure bound. Throughout, we shall make repeated use of the following important generalisation of Khintchine's inequality due to Burq and Tzvetkov \cite{burq2008random}, formulated here as in \cite{burq2008random}.
\begin{lemma}(Large Deviation Estimate, Lemma 3.1 \cite{burq2008random}) \label{large_deviation}
Let $(g_k)_{k\in \mathbb{N}}$ be a sequence of independent, real-valued, zero-mean random variables on a probability space $(\Omega, \mathcal{A},\mathbb{P})$ with distributions $(\mu_k)_k$ satisfying $$\int_\R e^{\gamma x} d\mu_k(x)\leq e^{c \gamma^2} \quad\quad\quad \forall \gamma\in \R$$ with the constant $c>0$ independent of $k,\gamma$. Then there is a constant $C>0$ such that 
\begin{align*}
\left\| \sum_{k\in \mathbb{N}} c_k g_k\right\|_{L^\beta (\Omega)}\leq C \sqrt{\beta} \left( \sum_{k\in \mathbb{N}} |c_k|^2 \right)^\frac{1}{2}
\end{align*}
for all $(c_k)_k \in \ell^2(\mathbb{N})$ and $\beta \in [2,\infty)$.
\end{lemma}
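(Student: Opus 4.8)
The plan is to run the standard argument: convert the pointwise exponential-moment hypothesis on each $\mu_k$ into a subgaussian tail bound for the sum $S:=\sum_k c_k g_k$, and then recover the $L^\beta$ bound by integrating that tail. First, by homogeneity in $(c_k)_k$ we may normalise $\sum_k |c_k|^2=1$; one should also note at the outset that $S$ is well-defined in $L^2(\Omega)$ (hence in every $L^\beta$, $\beta\in[2,\infty)$, once the bound is established) as the $L^2$-limit of the partial sums $S_N=\sum_{k\le N}c_kg_k$, which is immediate from the orthogonality of the independent mean-zero summands. It therefore suffices to prove the estimate with a constant independent of $N$ for each $S_N$ and then pass to the limit via Fatou.

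The key computation is that for every $\gamma\in\R$, independence and the hypothesis applied with parameter $\gamma c_k$ give
\begin{equation*}
\mathbb{E}\bigl[e^{\gamma S_N}\bigr]=\prod_{k\le N}\int_\R e^{(\gamma c_k)x}\,d\mu_k(x)\leq\prod_{k\le N}e^{c\gamma^2c_k^2}=e^{c\gamma^2\sum_{k\le N}c_k^2}\leq e^{c\gamma^2}.
\end{equation*}
Applying this with $\gamma>0$ to both $S_N$ and $-S_N$, a Chernoff bound yields $\mathbb{P}(\pm S_N>\lambda)\leq e^{-\gamma\lambda+c\gamma^2}$ for all $\lambda>0$, and optimising in $\gamma$ (the choice $\gamma=\lambda/2c$) gives the subgaussian tail $\mathbb{P}(|S_N|>\lambda)\leq 2e^{-\lambda^2/4c}$. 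Then the layer-cake formula and the substitution $u=\lambda^2/4c$ give
\begin{equation*}
\mathbb{E}\bigl[|S_N|^\beta\bigr]=\beta\int_0^\infty\lambda^{\beta-1}\mathbb{P}(|S_N|>\lambda)\,d\lambda\leq 2\beta\int_0^\infty\lambda^{\beta-1}e^{-\lambda^2/4c}\,d\lambda=\beta\,(4c)^{\beta/2}\,\Gamma(\beta/2),
\end{equation*}
so that $\|S_N\|_{L^\beta(\Omega)}\leq\beta^{1/\beta}(4c)^{1/2}\Gamma(\beta/2)^{1/\beta}$. Since $\beta^{1/\beta}\leq e^{1/e}$ is bounded and, by Stirling's formula, $\Gamma(\beta/2)^{1/\beta}\lesssim\sqrt{\beta}$ uniformly for $\beta\in[2,\infty)$, we obtain $\|S_N\|_{L^\beta(\Omega)}\leq C\sqrt{\beta}$ with $C$ depending only on $c$; letting $N\to\infty$ and undoing the normalisation completes the proof.

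\textbf{Main obstacle.} There is no genuine difficulty here — this is a classical estimate — and the only points needing a little care are the two bookkeeping issues flagged above: justifying the passage from finite partial sums to the infinite sum (handled by the $L^2$-convergence of $S_N$ together with Fatou applied to the tail bound), and making the Stirling step quantitative enough to produce the factor $\sqrt\beta$ uniformly in $\beta\geq2$ rather than merely asymptotically. As an alternative to Stirling one could instead prove the bound first for even integers $\beta=2m$ by expanding $\mathbb{E}[S^{2m}]$ and counting pairings, and then interpolate in $\beta$; the tail-bound route sketched here is cleaner and is the one I would write up.
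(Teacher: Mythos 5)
Your proof is correct and coincides with the classical argument of Burq--Tzvetkov (Lemma 3.1 of \cite{burq2008random}), which the paper cites rather than reproves: exponential moment bound via independence, Chernoff optimisation to a subgaussian tail, and layer-cake integration with Stirling to extract the factor $\sqrt{\beta}$. The limiting step $S_N\to S$ and the uniform variance bound $\mathbb{E}[g_k^2]\leq 2e^c$ (from $x^2\leq e^x+e^{-x}$ and the hypothesis at $\gamma=\pm1$) are exactly as you indicate, so there is nothing to add.
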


\subsection{Bounds using randomisation-improved Strichartz.}
In this section we will prove that under the conditions of Theorem \ref{random_thm}, we have
\begin{equation}
F^\om\in L^\infty_tL^{\f{2d}{d-2}}_x\medcap L^\f{1}{\sigma }_t L^{\f{2d}{d-4\sigma }}_x\medcap R(\R)\label{BOUNDS_ONE}
\end{equation}
and
\begin{equation}
\na F^\om\in L^2_t L^\f{4d-2}{2d-3-\sigma }_x\medcap L^2_tL^\f{2d(2d-1)}{2d^2-7d+4+d\sigma }_x\medcap L^2_tL^\f{2d(d-2)}{d(d-6)+16\sigma }_x(\R)\label{BOUNDS_TWO}
\end{equation}
almost surely. Note that it is not claimed in the Theorem that $\na F^\omega$ lies in the final space listed above, but we need it in order to deduce one of the other bounds by interpolation.

These results rely on the following proposition allowing us to gain derivatives on the randomised free evolution, adapted from \cite{spitz2021almost}. Throughout this section, $\fomega$ always refers to the randomisation of $f$ as in \eqref{rand_f} and all spacetime norms are over $\R \times \R^d$.

The key estimate for this section then reads as follows.
\begin{proposition}[See Proposition 3.4(ii), \cite{spitz2021almost}]	\label{randomisationestimate}
Let $(q,p_0)\in [2,\infty)$ satisfy
\begin{align}\label{A}
\frac{1}{q}\leq\left(d-\frac{1}{2}\right)\left(\frac{1}{2}-\frac{1}{{p_0}}\right) && \text{ and } && ({q},{p_0})\neq\left(2,\frac{4d-2}{2d-3}\right)
\end{align}
Let $p\in[p_0, \infty)$. Then for any $f\in H^{s}(\R^d)$ with $s\geq 0$, it holds
$$\|\Ut f^{\omega}\|_{L^{\beta}_{\omega}\dot{B}^{s+\frac{2}{q}+\frac{d}{p_0}-\frac{d}{2}}_{q,p,2}}\lesssim_{d,q,p,p_0} \sqrt{\beta} \|f\|_{H^s(\R^d)}$$
for all $\beta\in[1,\infty)$.
\end{proposition}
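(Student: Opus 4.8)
The plan is to follow the argument of \cite{spitz2021almost} (this is essentially their Proposition~3.4(ii), and the argument is insensitive to the dimension being $d>6$), reducing the estimate, via Littlewood--Paley theory and the large deviation bound, to a purely radial Strichartz estimate for each spherical-harmonic component. First, since $L^\beta_\omega$-norms on a probability space are increasing in $\beta$, I would restrict to $\beta\geq\max\{q,p\}$, so that Minkowski's inequality permits moving $L^\beta_\omega$ inside the space-time norm. Writing the Besov norm as an $\ell^2$-sum over dyadic $N$ of $\|P_N\Ut\fomega\|_{L^q_tL^p_x}$ and applying Lemma~\ref{large_deviation} for fixed $(t,x)$ to $P_N\Ut\fomega$, viewed as a random series through (\ref{rand_f}), the problem reduces, with $G:=\gain$, to
\[
\Big(\sum_{N\in2^{\Z}}N^{2(s+G)}\Big\|\Big(\sum_{M,i,j,k,l}\big|P_N\Ut P_j\fmikl\big|^2\Big)^{1/2}\Big\|_{L^q_tL^p_x}^2\Big)^{1/2}\lesssim\|f\|_{H^s(\R^d)}.
\]
Because $\widehat{\fmikl}$ is supported in $\{|\xi|\sim M\}$, only $M\sim N$ and $|j|\lesssim N$ contribute; I would dispose of the $O(1)$ relevant values of $M$ and of the unit-frequency index $j$ by routine square-function estimates for the frequency projections (all bounded on $L^p_x$ for $p\in(1,\infty)$), and pull the remaining $\ell^2$-sum over $(i,k,l)$ outside the $L^q_tL^p_x$-norm by Minkowski's inequality (legitimate since $q,p_0\geq2$). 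This reduces everything to a single-piece bound of the form $\|\Ut\fmikl\|_{L^q_tL^p_x}\lesssim M^{-G}\cdot(\text{the }L^2\text{-mass of }\fmikl)$ uniform in $M,i,k,l$; the identity (\ref{eqn_g}) and the almost-orthogonality $\sum_i\|\vp_if\|_{L^2}^2\sim\|f\|_{L^2}^2$, together with the rescaling bookkeeping in (\ref{EQN1})--(\ref{rand_f}), then repackage the right-hand side as $\|f\|_{H^s(\R^d)}$.

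The core of the proof is this single-piece estimate. By construction $\fmikl$ is a radial profile times the single spherical harmonic $\bkl$, with frequency $\sim M$; by (\ref{ft}) the free evolution preserves the factor $\bkl$ and acts on the radial profile through the Bessel kernel $J_{\nu(k)}$, i.e.\ through the one-dimensional Schr\"odinger propagator for the Bessel operator $-\partial_r^2-\frac{d-1}{r}\partial_r+\frac{k(k+d-2)}{r^2}$. Using the good-basis bound (\ref{eqn1}) to control $\|\bkl\|_{L^p(S^{d-1})}\lesssim1$, I would reduce to a radial Strichartz estimate for this propagator. The point is that radial Strichartz estimates hold on a strictly wider range than the full-dimensional admissible one: the pair $(q,p_0)$ is admissible in the radial sense precisely under hypothesis (\ref{A}) (which rearranges to $\frac2q+\frac{2d-1}{p_0}\leq d-\frac12$) and the removal of the endpoint $(2,\frac{4d-2}{2d-3})$, and this produces the gain $G$ of derivatives. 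For $p>p_0$ one exploits in addition the improved integrability of the randomised evolution afforded by the unit-frequency (Wiener) and physical-space decompositions, together with the uniform-in-$p$ bound $\|\bkl\|_{L^p(S^{d-1})}\lesssim1$ from (\ref{eqn1}). Crucially, all constants must be taken uniform in the angular degree $k$ — $k$ enters only through the repulsive centrifugal potential $k(k+d-2)/r^2$, for which Strichartz constants do not degenerate — and uniform in $i$ and $M$.

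I expect the main obstacle to be exactly this uniform-in-$k$ radial (Bessel-operator) Strichartz estimate on the sub-admissible line, together with the verification that $p_0$ as constrained by (\ref{A}) is the sharp threshold for the gain while the endpoint $(2,\frac{4d-2}{2d-3})$ genuinely has to be excluded; the surrounding machinery (Littlewood--Paley theory, the large deviation estimate, Minkowski, almost-orthogonality of the physical-space and unit-frequency pieces, and tracking the scaling factors $M^{-(d-2)/2}$, $M^{d/2}$, etc.\ coming from (\ref{EQN1})--(\ref{rand_f})) is routine. Since these radial estimates, and their combination with precisely this randomisation, are carried out in full in \cite{spitz2021almost} and the argument does not depend on $d>6$, one may alternatively invoke that reference directly.
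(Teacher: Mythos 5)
Your proposal follows essentially the same route as the paper, which itself omits the proof and defers to Proposition 3.4(ii) of \cite{spitz2021almost}: reduce via Littlewood--Paley, Minkowski and the large deviation estimate to a per-piece bound on $\Ut\fmikl$, then invoke the uniform-in-$k$ radially averaged Strichartz estimate of Guo \cite{guo2016sharp} on the extended admissible line (\ref{A}), using the good-basis bound (\ref{eqn1}) to control the angular factor and Corollary 3.3 of \cite{spitz2021almost} to resum the physical-space pieces. The one presentational difference is that the paper's argument passes through the mixed radial--angular spaces $L^q_t\mathcal{L}^{p_0}_rL^\mu_\theta$ (so the square-function estimate removing the unit-scale projections $P_j$ is Lemma 2.3 of \cite{spitz2021almost} in those spaces) rather than working in $L^q_tL^p_x$ directly, but this is cosmetic and your outline is correct.
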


Observe that the maximum derivative gain by this estimate occurs at the non-allowed endpoint $(2,\f{4d-2}{2d-3})$, where we would gain $$\f{2}{q}+\f{d}{p_0}-\f{d}{2}=\f{d-1}{2d-1}$$ derivatives.

Since the proof of Proposition \ref{randomisationestimate} is very similar to the $d=4$ case in \cite{spitz2021almost}, we omit it here, however we remark that it relies crucially on a Strichartz estimate in radially averaged spaces due to Guo \cite{guo2016sharp}.

The bounds on $F^\om$ without any derivatives are then implied by the following corollary:
\begin{corollary}\label{cor741}
Let $q\in[2,\infty)$, $p\in[2,\infty)$ satisfy
\begin{align}       \label{nonstrict}
\frac{2}{q}+\frac{d}{p}\leq \frac{d}{2}
\end{align}
Let $f\in L^2(\R^d)$, $\fomega$ its randomisation. Then for almost every $\om\in\Omega$ it holds 
\[
\|\Ut \fomega\|_{L^q_tL^p_x}<\infty
\]
\end{corollary}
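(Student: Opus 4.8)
The plan is to reduce the statement to Proposition \ref{randomisationestimate} with $s=0$, using the freedom in the auxiliary exponent $p_0$ there. Given $(q,p)$ as in the hypothesis, I would take $p_0$ so that $(q,p_0)$ is a Strichartz-admissible pair, i.e. $p_0:=\frac{2dq}{dq-4}$; this is finite and at least $2$, since $d>6$ and $q\geq 2$ give $dq>4$ and hence $\frac{2dq}{dq-4}\geq 2$. Comparing $\frac2q+\frac dp\leq\frac d2=\frac2q+\frac{d}{p_0}$ shows $p_0\leq p$, so $p\in[p_0,\infty)$. One must also check the two conditions in (\ref{A}): the admissibility identity $\frac2q+\frac{d}{p_0}=\frac d2$ rearranges to $\frac1q=\frac d2\big(\frac12-\frac1{p_0}\big)\leq\big(d-\frac12\big)\big(\frac12-\frac1{p_0}\big)$ (using $p_0\geq 2$, $d\geq1$), which is the first condition; and since $(q,p_0)$ is by construction Strichartz-admissible whereas $\big(2,\frac{4d-2}{2d-3}\big)$ is not (admissibility there forces $d=1$), the excluded pair is avoided.

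With these checks in place, Proposition \ref{randomisationestimate} with $s=0$ has a vanishing derivative gain $\frac2q+\frac{d}{p_0}-\frac d2=0$, and therefore
$$\|\Ut\fomega\|_{L^\beta_\omega\dot{B}^0_{q,p,2}}\lesssim_{d,q,p}\sqrt{\beta}\,\|f\|_{L^2(\R^d)}$$
for every $\beta\in[1,\infty)$. It then remains to pass from the Besov norm to $L^q_tL^p_x$, which is the purely deterministic embedding $\|h\|_{L^q_tL^p_x}\lesssim_{q,p,d}\|h\|_{\dot{B}^0_{q,p,2}}$, valid for $q,p\geq 2$ by the Littlewood--Paley inequality (Lemma \ref{LPIQ}) followed by Minkowski's inequality in $L^{q/2}_tL^{p/2}_x$. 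Applying this pointwise in $\omega$ and taking $L^\beta(\Omega)$ norms, and fixing $\beta=2$, gives $\|\Ut\fomega\|_{L^q_tL^p_x}\in L^2(\Omega)$, whence $\|\Ut\fomega\|_{L^q_tL^p_x}<\infty$ for almost every $\omega$.

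I do not expect a genuine obstacle here: the analytic content lies entirely in Proposition \ref{randomisationestimate}, and the only new point is the bookkeeping observation that every pair on or below the Strichartz line can be reached with \emph{zero} derivative loss by inserting a Strichartz-admissible intermediate exponent $p_0\leq p$. The mildest point requiring care is that $\Ut\fomega$ is a well-defined, measurable $L^q_tL^p_x$-valued object, which is standard since for $f\in L^2$ the series (\ref{rand_f}) converges in $L^2(\Omega;L^2(\R^d))$ (Appendix \ref{appendix_one}) and $\Ut$ is continuous on $L^2(\R^d)$.
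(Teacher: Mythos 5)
Your proposal is correct and follows essentially the same route as the paper: both reduce to Proposition \ref{randomisationestimate} with $s=0$ by choosing a Strichartz-admissible $p_0\leq p$ on the line $\frac2q+\frac d{p_0}=\frac d2$ and then pass between $L^q_tL^p_x$ and $\dot B^0_{q,p,2}$ via Littlewood--Paley. Your explicit verification that Strichartz pairs satisfy (\ref{A}) and avoid the excluded endpoint is a detail the paper asserts without computation, but the argument is the same.
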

\begin{proof}
Let $\beta \geq 2$. By the Littlewood-Paley inequality we have
\begin{equation*}
    \|\Ut f^\om\|_{L^\beta_\om L^q_t L^p_x}\lesssim_d \left\| \left( \sum_N \|P_N \Ut f^\om\|_{L^q_t L^p_x}\right)^\frac{1}{2}\right\|_{L^\beta_\om}=\|\Ut f^\om\|_{L^\beta_\om \db^0_{q,p,2}}
\end{equation*}
Since $(q,p)$ satisfy \eqref{nonstrict}, there exists $2\leq p_0\leq p$ such that $(q,p_0)$ is a Strichartz pair, i.e. $$\frac{2}{q}+\frac{d}{p_0}-\frac{d}{2}=0$$
Since every Strichartz pair satisfies \eqref{A}, we are able to immediately apply Proposition \ref{randomisationestimate} with $s=0$ to obtain the result.
\end{proof}

It is then immediate that, for $\sigma (d)$ sufficiently small, $F^\om\in L^{\f{1}{\sigma }}_t L^{\f{2d}{d-4\sigma }}_x$ almost surely.

To show that $F^\omega\in L^\infty_t L^{\f{2d}{d-2}}_x(\R)$ for almost every $\om$ requires an endpoint case of Proposition \ref{randomisationestimate} allowing for $q=\infty$. We prove this as in \cite{killip2019almost}.
\begin{lemma}
Let $s\geq\f{d-2}{2d-1}$, $f\in H^s(\R^d)$, $\fomega$ its randomisation. Then for almost every $\omega\in\Omega$ it holds $$\Ut f^\omega\in L^\infty_t L^{\pp}_x(\R^d)$$
\end{lemma}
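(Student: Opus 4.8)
The plan is to follow \cite{killip2019almost} and replace the unavailable endpoint $q=\infty$ of Proposition \ref{randomisationestimate} by a gain of $\f2q$ extra \emph{spatial} derivatives in a Besov space with finite exponents, which is then converted back into the missing $L^\infty_t$ integrability through a one-dimensional interpolation in the time variable.

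First I would reduce to the borderline case. Since $H^s(\R^d)\hookrightarrow H^{s_0}(\R^d)$ for $s_0:=\f{d-2}{2d-1}$ whenever $s\ge s_0$, it suffices to prove the claim when $s=s_0$, with a bound $\lesssim_d\|f\|_{H^{s_0}}\le\|f\|_{H^s}$. I then fix $q=p=\pp$ and $p_0:=\f{2d(2d-1)}{2d^2-3d+4}$; a short computation shows $2\le p_0\le p<\infty$ and $q\in(2,\infty)$, that $(q,p_0)$ satisfies the first condition in (\ref{A}) with equality (and $(q,p_0)\ne(2,\f{4d-2}{2d-3})$ since $q\ne2$), and --- this is the point of the choice of $p_0$ --- that the regularity index produced by Proposition \ref{randomisationestimate} is exactly $\f2q$, i.e. $s_0+\f2q+\f{d}{p_0}-\f d2=\f2q$.

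Next comes the time-for-derivatives trade. Fix $\om$ with $\fomega\in L^2(\R^d)$, which holds for almost every $\om$ by Remark \ref{rmk1.4}. For each dyadic $N$ the map $t\mapsto P_N\Ut\fomega$ is smooth into $L^{\pp}_x(\R^d)$ and satisfies $\dd_t(P_N\Ut\fomega)=i\Delta(P_N\Ut\fomega)$, so applying the one-dimensional Gagliardo--Nirenberg inequality $\|g\|_{L^\infty(\R)}\lesssim_q\|g\|_{L^q(\R)}^{1-\f1q}\|g'\|_{L^q(\R)}^{\f1q}$ to $g(t)=\|P_N\Ut\fomega\|_{L^{\pp}_x}$ and using the Bernstein-type bound $\|\Delta P_Nh\|_{L^{\pp}_x}\sim_dN^2\|P_Nh\|_{L^{\pp}_x}$ gives $\|P_N\Ut\fomega\|_{L^\infty_tL^{\pp}_x}\lesssim_dN^{2/q}\|P_N\Ut\fomega\|_{L^q_tL^{\pp}_x}$. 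Passing from $L^{\pp}_x$ to the Littlewood--Paley square function (Lemma \ref{LPIQ}), pulling the $\ell^2_N$ sum outside $L^\infty_tL^{\pp}_x$ by Minkowski's inequality (both exponents being $\ge2$), and summing the previous estimate over $N$, I obtain the deterministic bound $\|\Ut\fomega\|_{L^\infty_tL^{\pp}_x}\lesssim_d\|\Ut\fomega\|_{\db^{2/q}_{q,\pp,2}}$. Taking $L^2(\Omega)$ norms of both sides and invoking Proposition \ref{randomisationestimate} with the parameters above (so that the gained index is precisely $\f2q$) then yields $\big\|\,\|\Ut\fomega\|_{L^\infty_tL^{\pp}_x}\,\big\|_{L^2(\Omega)}\lesssim_d\|\Ut\fomega\|_{L^2_\om\db^{2/q}_{q,\pp,2}}\lesssim_d\|f\|_{H^{s_0}}\le\|f\|_{H^s}<\infty$, so $\Ut\fomega\in L^\infty_tL^{\pp}_x(\R^d)$ almost surely.

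I do not expect a serious obstacle beyond what is already in Corollary \ref{cor741}: the only genuine idea is the time-for-derivatives trade, and the single delicate point is the bookkeeping in the choice of parameters --- matching a finite $q\ge2$ with $q\ne2$ to an admissible $p_0\le\pp$ so that the derivative surplus of Proposition \ref{randomisationestimate} equals exactly $\f2q$; the hypothesis $s\ge\f{d-2}{2d-1}$ is comfortably sufficient for this choice. One should also note that the interpolation step is legitimate for each fixed $\om$ irrespective of finiteness, since by Bernstein $\|\dd_t(P_N\Ut\fomega)\|_{L^q_tL^{\pp}_x}\sim N^2\|P_N\Ut\fomega\|_{L^q_tL^{\pp}_x}$, so the inequality holds trivially when either side is infinite.
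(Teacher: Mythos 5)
Your proposal is correct and follows essentially the same route as the paper: the identical parameter choice $q=p=\pp$, $p_0=\f{2d(2d-1)}{2d^2-3d+4}$ in Proposition \ref{randomisationestimate} so that the gained regularity is exactly $\f2q=\f{d-2}{d}$, combined with a frequency-localised trade of one power of $\dd_t=i\Delta$ for the missing $L^\infty_t$ integrability and the Littlewood--Paley/Minkowski reduction to $\db^{2/q}_{q,\pp,2}$. The only (cosmetic) difference is that you obtain the bound $\|P_N\Ut\fomega\|_{L^\infty_tL^{\pp}_x}\lesssim N^{2/q}\|P_N\Ut\fomega\|_{L^q_tL^{\pp}_x}$ from the one-dimensional Gagliardo--Nirenberg inequality, whereas the paper derives the same estimate by averaging the fundamental theorem of calculus over an initial time in a window of length $\delta=N^{-2}$ and applying H\"older; the two arguments are interchangeable.
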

\begin{proof}
Let $I\subset \R$ with $|I|=\dl$ to be determined. Let $t_0$, $t\in I$. Then for any $N\in 2^{\Z}$ we have
$$\|P_N\Ut \fomega\|_{L^{\pp}_x(\R^d)}\leq\|P_N\Ut \fomega(t_0)\|_{L^{\pp}_x(\rd)}+\|\dd_tP_N\Ut \fomega\|_{L^1_t L^{\pp}_x(I)}$$
and averaging this over $t_0\in I$ we find
\begin{align*}
\|P_N \Ut \fomega\|_{L^{\pp}_x(\rd)}\lesssim& \dl^{-1}\|P_N \Ut \fomega\|_{1,\pp[I]}+\|\dd_tP_N \Ut \fomega\|_{1,\pp[I]}\\
\lesssim&\dl^{-\f{d-2}{2d}}\|P_N \Ut \fomega\|_{\pp,\pp[\R]}+\dl^{\f{d+2}{2d}}\|\dd_tP_N \Ut \fomega\|_{\pp,\pp[\R]}\\
\lesssim&\dl^{-\f{d-2}{2d}}\|P_N \Ut \fomega\|_{\pp,\pp[\R]}+\dl^{\f{d+2}{2d}}N^2\|P_N\Ut \fomega\|_{\pp,\pp[\R]}\\
\lesssim&N^\f{d-2}{d}\|P_N\Ut \fomega\|_{\pp,\pp[\R]}
\end{align*}
choosing $\dl=N^{-2}$ in the last line.

Averaging over $\om$ we thus obtain, via the Littlewood-Paley inequality,
\begin{align*}
\|\Ut \fomega\|_{L^\beta_\om L^\infty_t L^{\pp}_x(\R)}\lesssim&\left\|\left(\sum_N\|P_N\Ut \fomega\|_{\infty,\pp[\R]}^2\right)^\frac{1}{2}\right\|_{L^\beta_\om}\\
\lesssim&\left\|\left(\sum_N(N^{\f{d-2}{d}}\|P_N\Ut \fomega\|_{\pp,\pp})^2\right)^\frac{1}{2}\right\|_{L^\beta_\om}\\
=&\|\Ut \fomega\|_{L^\beta_\om \dot{B}^\f{d-2}{d}_{\pp,\pp,2}}
\end{align*}
Applying Proposition \ref{randomisationestimate} with $s=\f{d-2}{2d-1}$, $q=p=\pp$ and $p_0=\f{2d(2d-1)}{2d^2-3d+4}$ we see that this is bounded by $\sqrt{\beta}\|f\|_{H^s}$.
\end{proof}

For the remaining bounds of \eqref{BOUNDS_ONE}-\eqref{BOUNDS_TWO}, we prove another corollary of Proposition \ref{randomisationestimate} to handle the terms involving $\na F^\om$.
\begin{corollary}\label{cor1006}
Let $q,p\in [2,\infty)$ satisfy \eqref{A}, $s>1-\left(\f{d-1}{2d-1}\right)\f{2}{q}\in (0,1)$. Then for any $f\in H^s(\R^d)$, $\fomega$ its randomisation, we have
$$\|\na \Ut\fomega\|_{L^q_tL^p_x}<\infty$$ for almost every $\om\in\Omega$.
\end{corollary}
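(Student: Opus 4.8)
The plan is to reduce, via Littlewood--Paley theory, to the Besov-space estimate of Proposition \ref{randomisationestimate}, choosing the auxiliary exponents so that exactly one derivative is gained.

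First I would fix $\beta\geq 2$. Since $1<p<\infty$ and $q,p\geq 2$, the Littlewood--Paley inequality (Lemma \ref{LPIQ}) applied to $\na\Ut f^\om$, the identity $\|P_N\na g\|_{L^p_x}\sim N\|P_Ng\|_{L^p_x}$, and Minkowski's inequality (applied in $x$ and then $t$, for each fixed $\om$, to pull the $\ell^2_N$-sum out of $L^q_tL^p_x$) give
\[
\|\na\Ut f^\om\|_{L^\beta_\om L^q_tL^p_x}\lesssim_{d,p}\Big\|\big(\textstyle\sum_N N^2\|P_N\Ut f^\om\|_{L^q_tL^p_x}^2\big)^{1/2}\Big\|_{L^\beta_\om}=\|\Ut f^\om\|_{L^\beta_\om\db^1_{q,p,2}}.
\]
Hence it suffices to bound the right-hand side by $\sqrt{\beta}\,\|f\|_{H^s}$; taking $\beta=2$ then yields $\mathbb{E}\big[\|\na\Ut f^\om\|_{L^q_tL^p_x}^2\big]<\infty$, so $\na\Ut f^\om\in L^q_tL^p_x$ for almost every $\om$.

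To estimate $\|\Ut f^\om\|_{L^\beta_\om\db^1_{q,p,2}}$ I would apply Proposition \ref{randomisationestimate}, which for an admissible pair $(q,p_0)$ satisfying (\ref{A}) with $p_0\leq p$ and any $s'\geq 0$ gives $\|\Ut f^\om\|_{L^\beta_\om\db^{\,s'+\frac 2q+\frac d{p_0}-\frac d2}_{q,p,2}}\lesssim_{d,q,p,p_0}\sqrt\beta\,\|f\|_{H^{s'}(\rd)}$. The idea is to match the homogeneous Besov index to $1$: set $s':=1-\frac 2q\cdot\frac{d-1}{2d-1}$, which lies in $(0,1)$, so that $f\in H^{s'}(\rd)$ by the inhomogeneous embedding $H^s\hookrightarrow H^{s'}$ (valid because the hypothesis $s>1-\frac 2q\cdot\frac{d-1}{2d-1}$ gives $s'<s$), and choose $p_0$ by $\frac1{p_0}:=\frac12-\frac2{q(2d-1)}$, so that the derivative gain $\frac 2q+\frac d{p_0}-\frac d2$ equals $\frac 2q\cdot\frac{d-1}{2d-1}=1-s'$. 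The relation $\frac1{p_0}=\frac12-\frac2{q(2d-1)}$ is exactly equality in the first inequality of (\ref{A}); moreover $p_0\leq p$ since (\ref{A}) for $(q,p)$ reads $\frac1p\leq\frac12-\frac2{q(2d-1)}=\frac1{p_0}$, and $p_0\in[2,\infty)$. With these choices $\|\Ut f^\om\|_{L^\beta_\om\db^1_{q,p,2}}\lesssim\sqrt\beta\|f\|_{H^{s'}}\lesssim\sqrt\beta\|f\|_{H^s}$, which completes the argument.

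The only point requiring care is the excluded endpoint $(q,p_0)\neq\big(2,\tfrac{4d-2}{2d-3}\big)$ in (\ref{A}): when $q=2$ the pair $p_0$ selected above is precisely $\tfrac{4d-2}{2d-3}$ (since $\tfrac12-\tfrac1{2d-1}=\tfrac{2d-3}{4d-2}$), which is inadmissible. Here the strict inequality in the hypothesis, which for $q=2$ reads $s>\tfrac d{2d-1}$, is exactly what provides the room to perturb: for a small $\delta>0$ with $s'+\delta<s$ one instead uses $s'+\delta$ in place of $s'$ and $\tfrac1{p_0}=\tfrac12-\tfrac1{2d-1}-\tfrac\delta d$, which for $\delta$ small enough lies strictly inside (\ref{A}) and still satisfies $p_0\leq p$ (because $(2,p)$, not being the excluded pair, must satisfy (\ref{A}) strictly). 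This exponent bookkeeping — in particular dodging the $q=2$ endpoint — is the only real obstacle; the remainder is a routine application of Littlewood--Paley theory, Minkowski's inequality, and Proposition \ref{randomisationestimate}.
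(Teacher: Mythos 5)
Your proof is correct and follows essentially the same route as the paper: reduce to $\db^1_{q,p,2}$ via Littlewood--Paley and Minkowski, apply Proposition \ref{randomisationestimate} with $p_0$ chosen at equality in (\ref{A}) so the derivative gain is $\f{2}{q}\cdot\f{d-1}{2d-1}$, and perturb $p_0$ slightly when $q=2$ to avoid the excluded endpoint (the paper's choice $p_0=\f{4d-2}{2d-3-\dl}$ is the same perturbation). Your bookkeeping, including the verification that $p_0\leq p$ survives the perturbation because $(2,p)$ satisfies (\ref{A}) strictly, is accurate.
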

\begin{proof}
Setting $p_0=\left(\frac{1}{2}-\f{2}{2d-1}\cdot\f{1}{q}\right)^{-1}\in(2,\infty)$ for $q\neq 2$, and $p_0=\f{4d-2}{2d-3-\dl}$ for $q=2$, where $\dl$ is a small constant, the pair $(q,p_0)$ satisfies \eqref{A}
and
\begin{equation*}
\f{2}{q}+\f{d}{p_0}-\f{d}{2}=
\begin{cases}
\left(\f{d-1}{2d-1}\right)\f{2}{q}  \text{ for } q\neq 2\\
\f{d-1}{2d-1}-\f{\dl}{2}\f{d}{2d-1} \text{ for }q=2
\end{cases}  
\end{equation*}
Applying Proposition \ref{randomisationestimate} in combination with the Littlewood-Paley inequality for these parameters yields the result, provided $\dl(d,p,s)$ is sufficiently small.
\end{proof}

Applying this corollary with $q=2$ we immediately obtain that $$\na F^\om\in L^2_tL^{\f{4d-2}{2d-3-\sigma }}_x\medcap L^2_t L^{\f{2d(2d-1)}{2d^2-7d+4+d\sigma }}_x\medcap L^2_tL^{\f{2d(d-2)}{d(d-6)+16\sigma }}_x(\R)$$ completing the proof of \eqref{BOUNDS_ONE}-\eqref{BOUNDS_TWO} since $s>1-\left(\f{d-1}{2d-1}\right)$.

To conclude this section, we show that $\|F^\om\|_{R(\R)}<\infty$ almost everywhere.

\begin{proposition}
Let $s>\frac{d^2+6d-4}{(2d-1)(d+2)}$. Then for almost every $\om \in \Omega$ we have
$$F^\omega \in  R(\R)$$
\end{proposition}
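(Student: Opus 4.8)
The plan is to verify separately the three ingredients making up the $R(\R)$-norm: that almost surely $F^\om\in V(\R)$, that $\na F^\om\in V(\R)$ (these two together giving $F^\om\in W(\R)$), and that $F^\om\in\dot{B}^{\frac{4}{d+2}}_{d+2,\frac{2(d+2)}{d}}(\R)$. Each of these follows from a suitably parametrised application of a result already established in this section, so the only real work is the bookkeeping needed to check the admissibility hypotheses. Throughout we may assume $s<1$, the case $s\geq1$ following from the embedding $H^s\hookrightarrow H^{s'}$ with $\frac{d^2+6d-4}{(2d-1)(d+2)}<s'<1$.

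First I would note that the pair $\big(\frac{2(d+2)}{d-2},\frac{2d(d+2)}{d^2+4}\big)$ is in fact Strichartz-admissible, $\frac{2}{q}+\frac{d}{p}=\frac{d}{2}$, hence satisfies the non-strict condition (\ref{nonstrict}); since $f\in L^2(\R^d)$, Corollary \ref{cor741} gives $F^\om\in V(\R)=L^{\frac{2(d+2)}{d-2}}_tL^{\frac{2d(d+2)}{d^2+4}}_x(\R)$ almost surely. Next, for $\na F^\om$ I would apply Corollary \ref{cor1006} with the same pair $(q,p)=\big(\frac{2(d+2)}{d-2},\frac{2d(d+2)}{d^2+4}\big)$: a direct computation shows this pair satisfies (\ref{A}) (the inequality there reduces to $\tfrac12\leq\tfrac{2d-1}{2d}$, and $q>2$ excludes the forbidden endpoint), while the regularity threshold of that corollary is $1-\big(\frac{d-1}{2d-1}\big)\frac{2}{q}=\frac{d^2+6d-4}{(2d-1)(d+2)}$, which lies strictly below $s$ by hypothesis. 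Hence $\na F^\om\in V(\R)$ almost surely, and combined with the previous step $F^\om\in W(\R)$ almost surely.

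For the Besov component I would invoke Proposition \ref{randomisationestimate} with $q=d+2$, $p=\frac{2(d+2)}{d}$, and the auxiliary Strichartz exponent $p_0:=d\big(\tfrac{d}{2}+\tfrac{2}{d+2}-s\big)^{-1}$, chosen precisely so that $s+\frac{2}{q}+\frac{d}{p_0}-\frac{d}{2}=\frac{4}{d+2}$ exactly. One then checks that $2\leq p_0\leq p$ — the first inequality is equivalent to $s\geq\frac{2}{d+2}$ and the second to $s\leq1$, both valid since $\frac{2}{d+2}<\frac{d^2+6d-4}{(2d-1)(d+2)}<s<1$ — and that $(d+2,p_0)$ satisfies (\ref{A}), which after simplification amounts to $s\geq\frac{6d-2}{(2d-1)(d+2)}$, again implied by the hypothesis on $s$ (since $d^2\geq2$). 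Proposition \ref{randomisationestimate} then yields, for $\beta=2$ say,
$$\big\|e^{it\Delta}f^\om\big\|_{L^2_\om\dot{B}^{\frac{4}{d+2}}_{d+2,\frac{2(d+2)}{d},2}}\lesssim_d\|f\|_{H^s(\R^d)}<\infty,$$
so by the same Fubini/Chebyshev argument used in Corollaries \ref{cor741} and \ref{cor1006} one gets $F^\om\in\dot{B}^{\frac{4}{d+2}}_{d+2,\frac{2(d+2)}{d}}(\R)$ almost surely. Combining the three steps gives $F^\om\in R(\R)$ almost surely.

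The step requiring the most care is the Besov bound: because a homogeneous Besov space is not controlled by one of different smoothness, $p_0$ must be tuned so that the index produced by Proposition \ref{randomisationestimate} is \emph{exactly} $\frac{4}{d+2}$, and one must then confirm that this particular $p_0$ simultaneously lies in the admissible window $[2,p]$ and satisfies (\ref{A}). The point worth making is that the resulting regularity requirement $\frac{d^2+6d-4}{(2d-1)(d+2)}$ coincides with the threshold coming from the $\dot{W}$-estimate of Step 2 and dominates the one produced by the Besov estimate, so the single hypothesis $s>\frac{d^2+6d-4}{(2d-1)(d+2)}$ suffices for all three parts; beyond this arithmetic there is no analytic difficulty not already contained in Proposition \ref{randomisationestimate}.
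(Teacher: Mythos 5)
Your proof is correct and follows essentially the same route as the paper: Corollary \ref{cor741} and Corollary \ref{cor1006} with $q=\frac{2(d+2)}{d-2}$ for the two $W$-components (the latter giving exactly the threshold $\frac{d^2+6d-4}{(2d-1)(d+2)}$), and Proposition \ref{randomisationestimate} with $q=d+2$, $p=\frac{2(d+2)}{d}$ for the Besov component. The only difference is cosmetic: the paper fixes $p_0=\frac{2(2d-1)(d+2)}{2d^2+3d-6}$ (implicitly applying the estimate at the lower regularity $s'=\frac{2(3d-1)}{(2d-1)(d+2)}$ via $H^s\hookrightarrow H^{s'}$ so the Besov index comes out exactly $\frac{4}{d+2}$), whereas you tune $p_0$ to $s$ to achieve the same index directly; both choices satisfy (\ref{A}) and $p_0\in[2,p]$ under the stated hypothesis.
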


\begin{proof}
Recall $$\|F^{\omega}\|_{R(\R)}:=\|F^{\omega}\|_{\f{2(d+2)}{d-2},\f{2d(d+2)}{d^2+4}(\R)}+\|\na F^{\omega}\|_{\f{2(d+2)}{d-2},\f{2d(d+2)}{d^2+4}(\R)}+\|F^{\omega}\|_{\db^{\frac{4}{d+2}}_{d+2,\frac{2(d+2)}{d}}(\R)}$$ For the first terms, we apply Corollaries \ref{cor741} and \ref{cor1006} with $q=\frac{2(d+2)}{d-2}$, $p=\f{2d(d+2)}{d^2+4}$, which forces the lower bound $$s>\frac{d^2+6d-4}{(2d-1)(d+2)}$$

For the final term, apply Proposition \ref{randomisationestimate} with $q=d+2$, $p=\f{2(d+2)}{d}$ and $p_0=\frac{2(2d-1)(d+2)}{2d^2+3d-6}\in[2,p]$ to obtain
$$\|F^{\omega}\|_{L^\beta_\om \db^{\frac{4}{d+2}}_{d+2,\frac{2(d+2)}{d},2}}\lesssim \sqrt{\beta}\|f\|_{H^s(\R^d)}$$ 
for any $s>\frac{2(3d-1)}{(d+2)(2d-1)}$ and $\beta \geq 1$.
\end{proof}

\subsection{Randomised $L^1_t$ estimate}\label{l1t}
In this section we will prove that, under the conditions of Theorem \ref{random_thm}, we have
\begin{equation}\label{BOUNDS_FOUR}
 \na F^{\om}\in L^1_t L^{\f{2d}{d-4}}_x \medcap L^{\f{d-2}{d-2-4\sigma }}_tL^{\f{2d(d-2)}{d(d-6)+16\sigma }}_x(\R)
\end{equation}
almost surely. Since we have already proved that $\na F^\om\in L^2_tL^{\f{2d(d-2)}{d(d-6)+16\sigma }}_x(\R)$ almost surely, for the third bound it is sufficient to prove that
\begin{equation}\label{BOUND_FIVE}
 \na F^{\om}\in L^1_tL^{\f{2d(d-2)}{d(d-6)+16\sigma }}_x(\R)
\end{equation}
We thus only need to find estimates in Lebesgue spaces with time exponent $1$. Key to such bounds are the following propositions which are generalisations of results of Spitz \cite{spitz2021almost} to high dimensions. The proofs are the same as in the dimension 4 case so we do not present them here, however we remark that it is for these results that the physical space part of the randomisation of $f$ is necessary.

The first result exploits the decay properties of the Schr\"{o}dinger semi-group to achieve bounds in spaces with low time integrability away from $t=0$:
\begin{proposition}[Proposition 3.6, \cite{spitz2021almost}] \label{prop36}
Let $s\geq 0$ and consider $q\in [1,\infty)$, $p\in [2,\infty)$ $\sigma \geq 0$ such that $$\sigma <\f{d}{2}-\f{1}{q}-\f{d}{p}$$
Let $f\in H^s(\R^d)$ and $\fomega$ be its randomisation as in \eqref{rand_f}. Then it holds
$$\|t^\sigma  \Ut \fomega\|_{L^\beta_\om L^q_{t}\db^s_{p,2}([1,\infty))}\lesssim_{d,q,p,\sigma } \sqrt{\beta} \|f\|_{H^s(\R^d)}$$
for all $\beta \geq 1$.
\end{proposition}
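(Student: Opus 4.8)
The plan is to adapt Spitz's argument \cite{spitz2021almost}: split $[1,\infty)=\bigcup_{m\geq0}I_m$ with $I_m:=[2^m,2^{m+1}]$, establish the estimate on each $I_m$ with a quantitative gain in $2^m$ coming from the dispersive decay of $\Ut$, and then sum in $m$. On $I_m$ one has $t^\sigma\sim2^{m\sigma}$ and the Schr\"odinger propagator decays (in the relevant spatial norm) like $t^{-d(\frac12-\frac1p)}$, so that $\|t^\sigma\,\cdot\,\|_{L^q_t([1,\infty))}$ becomes the $\ell^q_m$-sum of pieces of size $\sim2^{m(\sigma+\frac1q-d(\frac12-\frac1p))}$, which is finite exactly under the hypothesis $\sigma<\frac d2-\frac1q-\frac dp$. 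First I would reduce to $\beta\geq\max\{2,p,q\}$, the range $\beta\in[1,\max\{2,p,q\})$ then following from H\"older's inequality on the probability space. For such $\beta$, Minkowski's inequality interchanges $L^\beta_\omega$ with the $\ell^2_N$, $L^q_t$ and $L^p_x$ norms as needed, and since $f^{N,i}_{k,l}$ has Fourier support in $\{\frac N2\leq|\xi|\leq2N\}$ only the $O(1)$ scales $M\sim N$ contribute to $P_N\fomega$; thus matters reduce to bounding $N^s\|t^\sigma P_N\Ut\fomega\|_{L^\beta_\omega L^q_t L^p_x(I_m)}$ for a single dyadic block.

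On a single block I would apply the large deviation estimate (Lemma \ref{large_deviation}) in $\omega$, pointwise in $(t,x)$, to replace $P_N\Ut\fomega$ by the square function $\big(\sum_{i,j,k,l}|\Ut P_jf^{N,i}_{k,l}|^2\big)^{1/2}$ at the cost of the harmless factor $\sqrt\beta$, and then invoke the fixed-time dispersive bound for $\Ut$ on $I_m$ to extract the decay $2^{-md(\frac12-\frac1p)}$ together with a $t$-independent spatial quantity. The one genuine subtlety is that this dispersive bound cannot be applied in bare $L^{p'}_x$: the angular components $f^{N,i}_{k,l}$ concentrate on thin annuli $|x|\sim\nu(k)/N$ and their $L^{p'}_x$-norms fail to be square-summable in $(k,l)$, so one must keep the spherical-harmonic factor $b_{k,l}(\theta)$ and work in radially-averaged spaces, using the good-basis bound $\|b_{k,l}\|_{L^q(S^{d-1})}\lesssim_q1$ and the accompanying angular square-function estimate exactly as in the $d=4$ treatment of \cite{spitz2021almost} (this also uses the rescaling $f^{N,i}_{k,l}\mapsto f^{N,i}_{k,l}(N^{-1}\cdot)$ to put the Bessel/Hankel data at unit frequency, cf. (\ref{ft}), (\ref{eqn_g})).

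It then remains to reassemble the $\ell^2$ sum over the decomposition indices: the $j$-sum by the unit-scale Littlewood--Paley inequality and Minkowski's inequality (recall $p'\leq2$); the $(k,l)$-sum by the angular square-function/good-basis estimate just described, which returns $P_N(\varphi_i f)$ in the appropriate radially-averaged norm; the $i$-sum by the bounded overlap of the partition $(\varphi_i)_{i\in\Z^d}$; and finally the $N$-sum, carrying the weight $N^{2s}$, by Plancherel together with a bound of the form $\|\langle M\rangle^sP_M(\varphi_i f)\|_{\ell^2_M\ell^2_iL^{p'}_x}\lesssim_{p,d}\|f\|_{H^s(\R^d)}$ (Corollary 3.3 of \cite{spitz2021almost}, valid for $d>4$), which transfers exactly $s$ derivatives with neither gain nor loss --- consistent with $s$ occurring on both sides of the claimed inequality. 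I expect the main obstacle to be the second step, namely producing a fixed-time decay estimate for the randomised free evolution in a norm strong enough for the subsequent angular and physical-space reassembly to go through; this is where the good basis and the physical-space part of the randomisation of $f$ are indispensable, and it is exactly the point that makes the $d=4$ proof of \cite{spitz2021almost} carry over with only notational changes.
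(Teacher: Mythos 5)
The paper provides no proof of this proposition: it cites Proposition 3.6 of \cite{spitz2021almost} directly and remarks only that the $d=4$ argument carries over to $d>6$ with notational changes, so there is no proof in the paper to compare against. Your sketch is a faithful reconstruction of the structure of that cited argument — dyadic decomposition of $[1,\infty)$ with the dispersive decay rate $t^{-d(\frac12-\frac1p)}$ and the $|I_m|^{1/q}$ factor forcing exactly the stated constraint $\sigma<\frac{d}{2}-\frac{1}{q}-\frac{d}{p}$, reduction to large $\beta$ and H\"older on $\Omega$, Minkowski interchange, the large deviation estimate, radially averaged spaces with the good-basis bound to handle the $(k,l)$-sum, and the $\ell^2_M\ell^2_iL^{p'}_x$ reassembly via Corollary 3.3 of \cite{spitz2021almost} — so it is consistent with the intended proof.
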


The gain in derivatives needed for \eqref{BOUNDS_FOUR} is obtained by interpolating this with the improved Strichartz estimate of Proposition \ref{randomisationestimate} to obtain the following:
\begin{proposition}[Proposition 3.7, \cite{spitz2021almost}]\label{prop37}
Let $s>\f{d+1}{2d-1}$. Then for any $f\in H^s(\R^d)$, $\fomega$ its randomisation, it holds $$\|\na \Ut\fomega\|_{L^\beta_\om L^1_t L^\infty_x(\R)}\lesssim_{s,d} \sqrt{\beta}\|f\|_{H^s(\R^d) }$$
for all $\beta\geq1$.
\end{proposition}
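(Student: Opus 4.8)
The plan is to combine the two available ingredients: the derivative‑gaining randomised Strichartz estimate of Proposition \ref{randomisationestimate}, which however only produces $L^q_t$ control for $q\geq 2$, and the time‑decay estimate of Proposition \ref{prop36}, which gains no derivatives but furnishes a weight $t^{\sigma}$ on $[1,\infty)$. The key structural point is that $f\mapsto\fomega$ is linear, so both statements are estimates for the \emph{linear} map $f\mapsto\na\Ut\fomega$ into (weighted, vector‑valued) Banach spaces and may therefore be interpolated directly; the factor $\sqrt\beta$, being a constant for fixed $\beta$, survives interpolation. I would treat $t\geq0$ (the case $t\leq0$ is identical after a time reversal) and split $[0,\infty)=[0,1]\cup[1,\infty)$, since the decay in Proposition \ref{prop36} is only effective away from $t=0$.

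First I would record a pure Strichartz bound. Taking $q=2$ and $(2,p_0)$ with $p_0$ slightly larger than the forbidden endpoint $\f{4d-2}{2d-3}$ in Proposition \ref{randomisationestimate} (so that condition (\ref{A}) holds strictly), and then $p$ large, one gains slightly less than $1-\f{d}{2d-1}=\f{d-1}{2d-1}$ derivatives; after Minkowski's inequality (an identity in $t$ since $q=2$) and the embeddings $\db^{\rho}_{p,2}(\R^d)\hookrightarrow\db^{d/p}_{p,1}(\R^d)\hookrightarrow L^{\infty}(\R^d)$, valid for $\rho>d/p$, this gives
$$\|\na\Ut\fomega\|_{L^{\beta}_{\omega}L^2_tL^{\infty}_x(\R)}\lesssim_{s,d}\sqrt{\beta}\,\|f\|_{H^s(\R^d)}\qquad\text{for every }s>\tfrac{d}{2d-1}.$$
On $[0,1]$ this already suffices, since $L^2_t([0,1])\hookrightarrow L^1_t([0,1])$ by H\"older and $\tfrac{d}{2d-1}<\tfrac{d+1}{2d-1}$.

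For the tail $[1,\infty)$ I would proceed by interpolation. For $q_0$ and $p_0$ large and $0\leq\sigma_0<\f{d}{2}-\f1{q_0}-\f{d}{p_0}$, Proposition \ref{prop36} combined with the same Besov embedding (which now requires $s_B>1+\f{d}{p_0}$, i.e.\ $s_B$ slightly above $1$) yields $\|t^{\sigma_0}\na\Ut\fomega\|_{L^{\beta}_{\omega}L^{q_0}_tL^{\infty}_x([1,\infty))}\lesssim\sqrt{\beta}\,\|f\|_{H^{s_B}(\R^d)}$. Interpolating this with the $L^2_tL^{\infty}_x$ bound above (taken at regularity $s_A$ slightly above $\f{d}{2d-1}$), with parameter $\theta\in(0,1)$ and using the standard interpolation of weighted mixed‑norm Lebesgue spaces, gives
$$\|t^{\theta\sigma_0}\na\Ut\fomega\|_{L^{\beta}_{\omega}L^q_tL^{\infty}_x([1,\infty))}\lesssim\sqrt{\beta}\,\|f\|_{H^{s_{\theta}}(\R^d)},\qquad\tfrac1q=\tfrac{1-\theta}{2}+\tfrac{\theta}{q_0},\quad s_{\theta}=(1-\theta)s_A+\theta s_B.$$
Finally I would remove the weight by H\"older in time, $\|\na\Ut\fomega\|_{L^1_tL^{\infty}_x([1,\infty))}\leq\|t^{-\theta\sigma_0}\|_{L^{q'}_t([1,\infty))}\,\|t^{\theta\sigma_0}\na\Ut\fomega\|_{L^q_tL^{\infty}_x([1,\infty))}$, which is legitimate precisely when $\theta\sigma_0>1-\f1q$. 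A short computation shows this is compatible with the constraint $\sigma_0<\f{d}{2}-\f1{q_0}-\f{d}{p_0}$ exactly when $\theta>(d-1-\tfrac{2d}{p_0})^{-1}$; letting $p_0\to\infty$ one may take any $\theta>\f1{d-1}$, and then $s_{\theta}\downarrow\big(1-\f1{d-1}\big)\f{d}{2d-1}+\f1{d-1}=\f{d+1}{2d-1}$ as $s_A\downarrow\f{d}{2d-1}$, $s_B\downarrow1$, $\theta\downarrow\f1{d-1}$. Summing the pieces $[0,1]$, $[1,\infty)$ and their negative‑time analogues then gives the claim for all $s>\f{d+1}{2d-1}$.

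I expect the main obstacle to be the simultaneous choice of $\theta,q_0,p_0,\sigma_0,s_A,s_B$ so that \emph{every} hypothesis holds at once --- condition (\ref{A}) for the Strichartz endpoint, $0\leq\sigma_0<\f{d}{2}-\f1{q_0}-\f{d}{p_0}$ for Proposition \ref{prop36}, $s_B>1+\f{d}{p_0}$ for the Besov embedding, and $\theta\sigma_0 q'>1$ for the final H\"older step --- while $s_{\theta}$ is pushed down to $\f{d+1}{2d-1}+$. Checking that complex interpolation of the weighted, vector‑valued spaces $L^{\beta}_{\omega}L^q_t(t^{\sigma q}\,dt)L^{\infty}_x$ behaves as used (in particular that it preserves the $\sqrt\beta$) is the other point to be careful about; everything else is routine.
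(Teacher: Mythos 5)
Your proposal is correct and follows essentially the same scheme that the paper uses: the paper itself states Proposition \ref{prop37} without proof (citing Spitz), but its proof of the generalisation immediately afterwards (the $L^r_x$ version with threshold $\f{4d-1}{3(2d-1)}$) is exactly your argument — split time into $(-1,1)$ and $|t|\geq 1$, use Proposition \ref{randomisationestimate} at $q=2$, $p_0=\f{4d-2}{2d-3-\dl}$ near the origin, and on the tail interpolate between the derivative-gaining Strichartz bound and the weighted bound of Proposition \ref{prop36}, removing the weight by H\"older in $t$ — and your numerology $s_\theta\downarrow(1-\f{1}{d-1})\f{d}{2d-1}+\f{1}{d-1}=\f{d+1}{2d-1}$ is exactly right. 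The one point worth adjusting: you do not need (and should avoid) complex interpolation of the weighted $L^\infty_x$-valued spaces; writing $G(t)=\|\na\Ut\fomega\|_{L^\infty_x}$ as $G^{1-\theta}\cdot(t^{\sigma_0}G)^{\theta}\cdot t^{-\theta\sigma_0}$ and applying a three-factor H\"older inequality in $t$ (and then H\"older in $\om$) reproduces your interpolated estimate elementarily, with the same admissibility condition $\theta\sigma_0 q'>1$ and with the factor $(\sqrt\beta)^{1-\theta}(\sqrt\beta)^{\theta}=\sqrt\beta$ preserved automatically; this is the form of "interpolation" the paper actually uses (there carried out on the Besov scale at fixed finite spatial exponent rather than directly in $L^\infty_x$).
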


To prove \eqref{BOUNDS_FOUR}, we need a more general version of this proposition allowing for a larger range of exponents in the $x$-variable. The proof is a modification of the proof in \cite{spitz2021almost} of the previous result.
\begin{proposition}
Let $s>\f{4d-1}{3(2d-1)}$, $\beta\geq 1$. Then for any $f\in H^s(\R^d)$, $\fomega$ its randomisation, it holds
\begin{equation}\label{eqn60}
\|\na \Ut \fomega\|_{L^\beta_\om L^1_t L^r_x(\R)}\lesssim_{d,s,r}\sqrt{\beta} \|f\|_{H^s(\R^d) }
\end{equation}
for any $\f{2d}{d-4}\leq r\leq\infty$.
\end{proposition}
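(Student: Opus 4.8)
The plan is to follow the proof of Proposition~\ref{prop37} (the endpoint case $r=\infty$, from \cite{spitz2021almost}), but to carry the spatial exponent $r$ through the estimates; the admissible range of $r$ and the regularity $s$ interact only in the final balancing of parameters. By the time-reversal symmetry of $\Ut$ it suffices to bound $\|\na\Ut\fomega\|_{L^1_tL^r_x([0,\infty))}$, and I split the time interval as $[0,\infty)=[0,1]\cup[1,\infty)$. Since $\frac{d+1}{2d-1}<\frac{4d-1}{3(2d-1)}$ for $d>4$, the case $r=\infty$ is already covered by Proposition~\ref{prop37}, so I may assume $\frac{2d}{d-4}\le r<\infty$ throughout. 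On $[0,1]$, H\"older's inequality in time gives $\|\na\Ut\fomega\|_{L^1_tL^r_x([0,1])}\lesssim\|\na\Ut\fomega\|_{L^2_tL^r_x([0,1])}$, and since $2<r<\infty$ the Littlewood--Paley inequality together with the triangle inequality in $L^{r/2}_x$ yields $\|\na\Ut\fomega\|_{L^2_tL^r_x}\lesssim\|\Ut\fomega\|_{\db^1_{2,r,2}}$. I then apply Proposition~\ref{randomisationestimate} with $q=2$, $p=r$ and $p_0=\frac{2d}{d-2s}$, chosen so that the derivative gain $\f{2}{q}+\f{d}{p_0}-\f{d}{2}$ is exactly $1-s$. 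One checks that $(2,p_0)$ satisfies (\ref{A}) precisely when $s>\frac{d}{2d-1}$ (the strict inequality also keeping us away from the excluded endpoint $(2,\frac{4d-2}{2d-3})$), and that $p_0\le r$ because $r\ge\frac{2d}{d-4}$ and $s<2$; both hold under our hypothesis on $s$. This disposes of $[0,1]$ for every $r$ in the stated range.

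On $[1,\infty)$ I introduce a time weight: for parameters $q>1$ and $\sigma>1-\frac1q$ one has $t^{-\sigma}\in L^{q'}_t([1,\infty))$, so
\[
\|\na\Ut\fomega\|_{L^1_tL^r_x([1,\infty))}\le\|t^{-\sigma}\|_{L^{q'}_t([1,\infty))}\,\|t^{\sigma}\na\Ut\fomega\|_{L^q_tL^r_x([1,\infty))},
\]
and, as above, $\|t^{\sigma}\na\Ut\fomega\|_{L^q_tL^r_x}\lesssim\|t^{\sigma}\Ut\fomega\|_{\db^1_{q,r,2}}$ for $q,r\ge2$. The weighted Besov norm is then obtained by interpolation: apply Stein's analytic interpolation theorem to the family $f\mapsto t^{\sigma_1 z}\Ut\fomega$ on the strip $0\le\re z\le1$ (which for $t\ge1$ is analytic with majorant $t^{\sigma_1}$), between Proposition~\ref{randomisationestimate}, restricted trivially to $[1,\infty)$ with weight $t^0$, which supplies $\|\Ut\fomega\|_{L^\beta_\omega\db^{s+\gamma_0}_{q_0,p_0^{*},2}}\lesssim\sqrt{\beta}\|f\|_{\Hs}$ with $\gamma_0=\f{2}{q_0}+\f{d}{p_0}-\f{d}{2}$, $p_0^{*}\ge p_0$ and $(q_0,p_0)$ satisfying (\ref{A}); and Proposition~\ref{prop36}, which supplies $\|t^{\sigma_1}\Ut\fomega\|_{L^\beta_\omega\db^s_{q_1,p_1,2}([1,\infty))}\lesssim\sqrt{\beta}\|f\|_{\Hs}$ for any $0\le\sigma_1<\f{d}{2}-\f{1}{q_1}-\f{d}{p_1}$. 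With interpolation parameter $\theta\in(0,1)$ this controls $\|t^{\theta\sigma_1}\Ut\fomega\|_{\db^{s+(1-\theta)\gamma_0}_{q,r,2}([1,\infty))}$, where $\frac1q=\frac{1-\theta}{q_0}+\frac\theta{q_1}$, $\frac1r=\frac{1-\theta}{p_0^{*}}+\frac\theta{p_1}$ and $\sigma=\theta\sigma_1$.

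One now imposes $s+(1-\theta)\gamma_0=1$ (a full derivative in the Besov index), the prescribed value of $r$, and $\theta\sigma_1>1-\frac1q$ (integrability of the weight). To make the available gain as large as possible one takes $q_0=2$, so that $\gamma_0$ can be taken arbitrarily close to, but because of the excluded endpoint strictly below, $\frac{d-1}{2d-1}$; this is what forces all the inequalities to be strict, hence the lower bound on $s$ is an open condition. Optimising the remaining free parameters $q_1,p_1,\sigma_1,p_0^{*}$ subject to these constraints, exactly as in \cite{spitz2021almost} but keeping $r$ general, shows that they are jointly solvable precisely when $s>\frac{4d-1}{3(2d-1)}$, the binding case being the smallest admissible exponent $r=\frac{2d}{d-4}$ (larger $r$ allows a larger $p_1$, hence more decay budget); for any larger $r\le\infty$ the same $s$ is more than enough. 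Reinstating the $L^\beta_\omega$ norm (the constant $\sqrt{\beta}$ survives since $\beta$ is fixed throughout) and adding the two parts gives (\ref{eqn60}).

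I expect the main obstacle to be this last optimisation: one must exhibit a single admissible tuple $(q_0,p_0,p_0^{*};q_1,p_1,\sigma_1;\theta)$ simultaneously meeting (\ref{A}) and $p_0^{*}\ge p_0$ for Proposition~\ref{randomisationestimate}, the strict inequality $\sigma_1<\f{d}{2}-\f{1}{q_1}-\f{d}{p_1}$ for Proposition~\ref{prop36}, the integrability condition $\theta\sigma_1 q'>1$ for the weight, and the derivative balance $s+(1-\theta)\gamma_0=1$, and then verify that the resulting threshold, as $r$ ranges over $[\frac{2d}{d-4},\infty)$, is exactly $\frac{4d-1}{3(2d-1)}$. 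A secondary technical point is to make the weighted interpolation rigorous --- Stein interpolation between the (frequency-$\ell^2$-outermost) weighted mixed-norm Besov spaces attached to Propositions~\ref{randomisationestimate} and~\ref{prop36}, using that for $q\ge2$ the decay estimate controls the smaller, frequency-$\ell^2$-outermost norm --- which is routine but should be written out carefully.
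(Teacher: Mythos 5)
Your high-level architecture is the paper's: split at $|t|=1$, use H\"older in $t$ plus Proposition \ref{randomisationestimate} on the bounded piece, and combine Propositions \ref{randomisationestimate} and \ref{prop36} on the unbounded piece, with a derivative split between the two. Your treatment of $[0,1]$ (with the cleaner choice $p_0=\frac{2d}{d-2s}$ so the gain is exactly $1-s$, valid as long as $s>\frac{d}{2d-1}$ which keeps you off the excluded endpoint) is fine and matches the spirit of the paper's $(-1,1)$ argument. Your parameter balance is also qualitatively right: taking $q_0=2$ to maximise the gain near $\frac{d-1}{2d-1}$, splitting roughly a $\theta\approx 1/3$ fraction toward the decay estimate, and observing that $r=\frac{2d}{d-4}$ is the binding case.

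Where you diverge, and where there is a genuine gap: on $[1,\infty)$ the paper does not use Stein analytic interpolation at all. It keeps both Lebesgue exponents fixed at $(2,\frac{2d}{d-4})$ and applies H\"older's inequality in the Littlewood--Paley index $N$ to factor the Besov norm as
\[
\|\Ut \fomega\|_{\db^1_{\f{2d}{d-4},2}}\leq \|\Ut \fomega\|_{\db^{1+\gamma}_{\f{2d}{d-4},2}}^\al\,\|\Ut\fomega\|_{\db^{1-\f{\al\gamma}{1-\al}}_{\f{2d}{d-4},2}}^{1-\al},
\]
then applies H\"older in $t$ to move a weight $t^{(1+\dl)/2(1-\al)}$ onto the second factor. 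This is elementary and sidesteps entirely the difficulty you flag at the end (``make the weighted interpolation rigorous''). Your Stein-interpolation framework is more powerful but also more delicate, and the extra freedom it grants (varying $q_1,p_1,p_0^*$) ultimately has to collapse to $q_0=q_1=2$, $p_0^*=p_1=r$ to hit the stated threshold --- at which point it \emph{is} the H\"older argument. If you go the Stein route you owe the reader a precise interpolation lemma for these $\ell^2$-outer, weighted, mixed-norm Besov scales, which you explicitly postpone.

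The more fundamental gap is the final optimisation. You write that optimising $q_1,p_1,\sigma_1,p_0^*$ ``shows that they are jointly solvable precisely when $s>\frac{4d-1}{3(2d-1)}$'' but exhibit no tuple and verify no constraint; your last paragraph concedes this is ``the main obstacle.'' That step \emph{is} the lemma. The paper fixes $\al=\frac23-\dl$, $\gamma=\frac{d-1}{3(2d-1)}$, takes $p_0=\frac{4d-2}{2d-3-\dl}$ for the Strichartz factor, takes $q=2$, $p=\frac{2d}{d-4}$, $\sigma=\frac{3+3\dl}{2+6\dl}$ for the decay factor, and then checks: (i) the first factor needs $s>\gamma+\nu(\dl)=\frac{4d-1}{3(2d-1)}+O(\dl)$; (ii) the decay estimate requires $\sigma<\frac32$, which holds for any $\dl>0$; (iii) the second factor needs $s>1-\frac{\al\gamma}{1-\al}=\frac{4d-1}{3(2d-1)}+O(\dl)$. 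You should either carry out the analogous verification in your framework or simply adopt the H\"older-for-sequences route, which makes it transparent. A secondary simplification available to you (and taken in the paper) is to prove only $r=\frac{2d}{d-4}$ and then interpolate with the $r=\infty$ case of Proposition \ref{prop36}/\ref{prop37}, rather than carrying a general $r$ through the balance; carrying $r$ forces you to argue monotonicity in $r$ of the feasibility region, which is extra work with no payoff.
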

\begin{proof}
We will prove the case $r=\f{2d}{d-4}$. This is sufficient by interpolation with Proposition \ref{prop37}. 

Observe that we may decompose the left hand side of \eqref{eqn60} as 
\begin{align*}
    &\|\na \Ut \fomega\|_{L^\beta_\om L^1_t L^{\f{2d}{d-4}}_x(\R)}\lesssim \|\Ut \fomega\|_{L^\beta_\om L^1_t \db^1_{\f{2d}{d-4},2}(\R)}\\
    \lesssim&\|\Ut \fomega\|_{L^\beta_\om L^1_{t}\db^1_{\f{2d}{d-4},2}(-1,1)}+\|\Ut \fomega\|_{L^\beta_\om L^1_{t}\db^1_{\f{2d}{d-4},2}(-\infty,-1]}+\|\Ut \fomega\|_{L^\beta_\om L^1_{t}\db^1_{\f{2d}{d-4},2}[1,\infty)}
\end{align*}
We first consider the term over $(-1,1)$. By H\"{o}lder's inequality we have
\begin{equation*}
    \|\Ut \fomega\|_{L^{\beta}_{\om} L^1_{t} \db^1_{\f{2d}{d-4},2}(-1,1)}\lesssim_d\|\Ut\fomega\|_{L^\beta_\om L^2_{t} \db^1_{\f{2d}{d-4},2}(-1,1)}    \lesssim_d \|\Ut\fomega\|_{L^\beta_\omega \db^{\nu(\dl)+\beta(\dl)}_{2,\f{2d}{d-4},2}(-1,1)}
\end{equation*}
where 
\begin{align*}
\beta(\dl)=\f{d-1}{2d-1}-\f{\dl}{2}\f{d}{2d-1} &&\text{ and }&&\nu(\dl)=\f{d}{2d-1}+\f{\dl}{2}\f{d}{2d-1}    
\end{align*}
for some $0<\dl(d,s)\ll 1$ to be determined.

We may now apply Proposition \ref{randomisationestimate} with $q=2$, $p_0=\f{4d-2}{2d-3-\dl}$, $p=\f{2d}{d-4}$ to obtain
\begin{equation*}
    \|\Ut \fomega\|_{L^{\beta}_{\om} L^1_{t} \db^1_{\f{2d}{d-4},2}(-1,1)}\lesssim_d\sqrt{\beta}\|f\|_{H^{\nu(\dl)}}
    \lesssim_d \sqrt{\beta} \|f\|_{H^s}
\end{equation*}
for $\dl(s,d)$ sufficiently small.

Next consider the term over $[1,\infty)$. By H\"{o}lder's inequality for sequences we have
\begin{equation*}
    \|\Ut \fomega\|_{\db^1_{\f{2d}{d-4},2}}=\left(\sum_N N^2\|P_N\Ut\fomega\|_{\f{2d}{d-4}}^2\right)^\frac{1}{2}
    \leq \|\Ut \fomega\|_{\db^{1+\gamma}_{\f{2d}{d-4},2}}^\al\|\Ut\fomega\|_{\db^{1-\f{\al\gamma}{1-\al}}_{\f{2d}{d-4},2}}^{1-\al}
\end{equation*}
for $\al\in [0,1)$, $\gamma\in [0,\f{1-\al}{\al}$) to be determined.

Combining this with H\"{o}lder's inequality in time, we have
\begin{align}
    \|\Ut\fomega\|_{L^\beta_\om L^1_t \db^1_{\f{2d}{d-4},2}[1,\infty)}\lesssim&_\dl \left\|t^{\f{1+\dl}{2}}\|\Ut\fomega\|_{\db^{1+\gamma}_{\f{2d}{d-4},2}}^\al\|\Ut\fomega\|_{\db^{1-\f{\al \gamma}{1-\al}}_{\f{2d}{d-4},2}}^{1-\al}\right\|_{L^\beta_\om L^2_t[1,\infty)}\nonumber\\
    \lesssim&_\dl \|\Ut\fomega\|_{L^\beta_\om L^2_t \db^{1+\gamma}_{\f{2d}{d-4},2}[1,\infty)}^\al \|t^\f{1+\dl}{2(1-\al)} \Ut\fomega\|_{L^\beta_\om L^2_t \db^{1-\f{\al\gamma}{1-\al}}_{\f{2d}{d-4},2}[1,\infty)}^{1-\al} \label{eqn75}
\end{align}

We will bound the first term of \eqref{eqn75} using the randomisation-improved Strichartz estimate from Proposition \ref{randomisationestimate}, and the second term using Proposition \ref{prop36}. Fix 
\begin{align*}
\al=\f{2}{3}-\dl&&\text{ and }&&\gamma=\f{d-1}{3(2d-1)}    
\end{align*}
(chosen to optimise the gain in derivatives in what follows). 
Applying Proposition \ref{randomisationestimate} with $q=2$, $p_0=\f{4d-2}{2d-3-\dl}$ and $p=\f{2d}{d-4}$ we obtain, for $\beta(\dl)$ and $\nu(\dl)$ as before,
\begin{align*}
\|\Ut\fomega\|_{L^\beta_\om L^2_t \db^{1+\gamma}_{\f{2d}{d-4},2}[1,\infty)}&\lesssim_d\|\Ut\fomega\|_{L^\beta_\om \db^{\gamma+\nu(\dl)+\beta(\dl)}_{2,\f{2d}{d-4},2}[1,\infty)}\\
&\lesssim_{d,\dl} \sqrt{\beta}\|f\|_{H^{\gamma+\nu(\dl)}}\\
&\lesssim_{d,\dl} \sqrt{\beta}\|f\|_{H^s}
\end{align*}
since $\gamma+\nu(\dl)=\f{4d-1}{3(2d-1)}+\f{\dl}{2}\f{d}{2d-1}<s$ for $\dl$ sufficiently small.

For the second term we apply Proposition \ref{prop36} with $q=2$, $p=\f{2d}{d-4}$, $\sigma =\f{1+\dl}{2(1-\al)}=\f{3+3\dl}{2+6\dl}$ to find
\begin{align*}
    \|t^\f{1+\dl}{2(1-\al)}\Ut\fomega\|_{L^\beta_\om L^2_t \db^{1-\f{\al \gamma}{1-\al}}_{\f{2d}{d-4},2}[1,\infty)}\lesssim& \sqrt{\beta}\|f\|_{H^s}
\end{align*}
since $1-\f{\al \gamma}{1-\al}=\f{4d-1}{3(2d-1)}+O(\dl)<s$ for $\dl$ sufficiently small.
Returning to \eqref{eqn75} we have $$\|\Ut\fomega\|_{L^\beta_\om L^1_t \db^1_{\f{2d}{d-4},2}[1,\infty)}\lesssim_d \sqrt{\beta}\|f\|_{H^s}$$

Treating the term over $(-\infty,-1]$ in the same way we obtain the desired result.
\end{proof}

The bounds \eqref{BOUNDS_FOUR} (via \eqref{BOUND_FIVE}) are now immediate, observing that $r=\f{2d(d-2)}{d(d-6)+16\sigma }$ is greater than $\f{2d}{d-4}$ for $\sigma (d)$ sufficiently small.

\appendix
\section{Calculation of energy increment \eqref{oneone}-\eqref{twotwo}}\label{appendix0}

\begin{proposition}\label{prop1}
Let $\vn\in C^1(\R,L^2)\cap C(\R,H^2)$ solve \eqref{FNLSn3} for some $F_n$ satisfying the conditions of Theorem \ref{thm334}. Then for any $T_1$, $T_2\in\R$ it holds 
\begin{multline*}
E_n(v_n(T_2))-E_n(v_n(T_1))
=-\frac{1}{2}\left[\intr\vp_n(|F_n+v_n|^2)-\vp_n(|v_n|^2)-\vp_n(|F_n|^2)dx\right]_{T_1}^{T_2}\\
-\im\int_{T_1}^{T_2}\intr\na \overline{F_n}\cdot\na(g_n(F_n+v_n)-g_n(F_n))dxdt
\end{multline*}
\end{proposition}
Before proving this proposition, we recall without proof the following useful fact:

Let $X$ be a Banach space. Then any $f\in C^1(\R,X)$ is in fact Fr\'{e}chet differentiable from $\R$ to $X$ with Fr\'{e}chet derivative $\dd_tf(t,\cdot)$ (see, for example, Section 1.3 \cite{cazenave2003semilinear}).\footnote{$C^1(\R,X)$ is defined anaologously to \ref{C1_space}.}

We will also use the following result to differentiate the nonlinearity:
\begin{lemma}\label{lem3}
Let $\psi\in C^2(\C,\C)$ with bounded second derivatives. Suppose also $\psi(w)\in L^1(\R^d)$ for all $w\in \lt(\R^d)$. Then the map $$H:w\mapsto \int_{\R^d}\psi(w(x))dx$$ is Fr\'{e}chet differentiable from $\lt(\R^d)$ to $\R$ with derivative
$$
DH|_w(h)=\int_{\R^d}(h\dd_z\psi(w)+\bar{h}\dd_{\bar{z}}\psi(w))dx
$$
\end{lemma}

Applying this lemma with $\psi(z)=\vp_n(|z|^2)$ and using the chain rule we observe that for any $v\in C^1(\R,L^2(\rd))$ it holds
\begin{equation}\label{differentiating_g_integral}
\dd_t\intr\vp_n(|v|^2)dx=2\re\intr\dd_t\overline{v} \text{ }g_n(v)dx
\end{equation}

We can now prove Proposition \ref{prop1}.
\begin{proof}[Proof of Proposition \ref{prop1}] 
We split the energy into a kinetic and a potential term:
\begin{align*}
KE(v):=\frac{1}{2}\langle \na v, \na v\rangle_{L^2}&&\text{ and }&&G_n(u):=\frac{1}{2}\intr\vp_n(|u|^2) dx
\end{align*}
The map $KE$ is (Fr\'{e}chet) differentiable from $H^1(\R^d)$ to $\R$, thus if we further suppose that $v\in C^1(\R,H^1)$ we have
\begin{equation*}\label{KEder}
\f{d}{dt}KE(v(t))=\re\intr\overline{\na v}\na(\dd_t v)dx=-\re\intr\dd_t\bar{v}\Delta v dx
\end{equation*}
and the same formula holds for $\vn\in C^1(\R,L^2(\R^d))\cap C(\R,H^2(\R^d))$ by approximation.

Likewise, by \eqref{differentiating_g_integral}, we have
\begin{equation*}\label{PEder}
\f{d}{dt}G_n(\vn(t))=\re\intr\dd_t\overline{\vn}g_n(v_n)dx
\end{equation*}

Combining these results and using that $\vn$ satisfies equation \eqref{FNLSn3} we obtain
\begin{align*}
E_n(\vn(T_2))&-E_n(\vn(T_1))=\re\int_{T_1}^{T_2}\intr\dd_t\overline{\vn}[-\Delta\vn+g_n(v_n)]dxdt\\
=&-\re\int_{T_1}^{T_2}\intr\dd_t\overline{\vn}[g_n(\fvn)-g_n(\vn)]dxdt\\
=&-\re\int_{T_1}^{T_2}\intr (\dd_t(\overline{F_n+v_n})g_n(F_n+v_n)-\dd_t\overline{v_n}g_n(v_n)-\dd_t\overline{F_n}g_n(F_n)) dxdt\\
&+\re\int_{T_1}^{T_2}\intr\dd_t\overline{F_n}(g_n(F_n+v_n)-g_n(F_n))dxdt
\label{equation_nought}
\end{align*}
It then remains to use \eqref{differentiating_g_integral} to rewrite the first integral above, and $\dd_tF_n=i\Delta F_n$ for the second, to obtain
\begin{align*}
&E_n(\vn(T_2))-E_n(\vn(T_1))\\
&=-\frac{1}{2}\int_{T_1}^{T_2}\dd_t\intr\vp_n(|F_n+v_n|^2)-\vp_n(|v_n|^2)-\vp_n(|F_n|^2)dxdt\\
&\quad+\im\int_{T_1}^{T_2}\intr\Delta \overline{F_n}(g_n(F_n+v_n)-g_n(F_n))dxdt\\
&=-\frac{1}{2}\left[\intr\vp_n(|F_n+v_n|^2)-\vp_n(|v_n|^2)-\vp_n(|F_n|^2)dx\right]_{T_1}^{T_2}\\
&\quad-\im\int_{T_1}^{T_2}\intr\na \overline{F_n}\cdot\na(g_n(F_n+v_n)-g_n(F_n))dxdt
\end{align*}
where we used the fundamental theorem of calculus and integrated by parts to obtain the final equality.
\end{proof}

\section{Justification of Remark \ref{rmk1.5}}\label{appendixB}
Here we will outline the proof of the following statement claimed in Remark \ref{rmk1.5}.
\begin{lemma}
Let $0<s<1$. Let $f\in L^2(\rd)$ and $\fomega$ denote its randomisation \eqref{rand_f}. Then if the probability space $(\Om,\mathcal{A},\mathbb{P})$ and the random variables $X^M_{i,j,k,l}:\Om\rightarrow \R$ are as described in Remark \ref{rmk1.5}, we have that $f\notin H^s(\rd)$ implies $\fomega\notin H^s(\rd)$ for almost every $\om\in\Om$.
\end{lemma}

We will follow the method of \cite{burq2008random} (Appendix B), using the almost-orthogonality of the projections $P_j$ to estimate
\begin{align*}
&\int_{\Om_1\times\Om_2\times\Om_3}e^{-\|\fomega\|_{H^s}^2}d\mathbb{P}(\om_1,\om_2,\om_3)\\
\leq&\int_{\Om_2\times\Om_3}\int_{\Om_1}e^{-{C}\sum_j|X_j(\om_1)|^2\|P_j\sum_{k,l,M,i}X^M_{k,l}(\om_2)X_i(\om_3)f^{M,i}_{k,l}\|_{H^s}^2}d\mathbb{P}(\om_1)d\mathbb{P}(\om_2,\om_3)\\
\leq&\int_{\Om_2\times\Om_3}e^{-C\|\sum_{k,l,M}X^M_{k,l}(\om_2)\sum_i X_i(\om_3)f^{M,i}_{k,l}\|_{H^s}^2}d\mathbb{P}(\om_2,\om_3)
\end{align*}
where we used that the random variables $X_j$ take values in $\{\pm1\}$.

We can treat the integral over $\Om_2$ similarly to find
$$
\int_{\Om_1\times\Om_2\times\Om_3}e^{-\|\fomega\|_{H^s}^2}d\mathbb{P}(\om_1,\om_2,\om_3)\leq\int_{\Om_3}e^{-C\|\sum_i X_i(\om_3)\vpi f\|_{H^s}^2}d\mathbb{P}(\om_3)
$$

In order to repeat the argument on $\Om_3$ and complete the proof, it remains to prove
\begin{equation}\label{double_inequality}
\|f\|_{H^s}^2\lesssim_{s,d}\sum_i\|\vpi f\|_{H^s}^2\lesssim_{s,d}\|\sum_i X_i(\om_3)\vpi f\|_{H^s}^2
\end{equation}

We will only prove the second inequality, the first being similar.
Denote $f_i:=\xw f$ and write
\begin{align*}
\|\vpi f\|_{\ell^2_iH^s_x(\rd)}=\|\vpi f_i\|_{\ell^2_iH^s_x(\rd)}\leq \|P_{\leq M_0}\vpi f_i\|_{\ell^2_iH^s_x(\rd)}+\|P_{>\mo}\vpi f_i\|_{\ell^2_iH^s_x(\rd)}
\end{align*}
for some $\mo\geq1$.
We first handle the low frequency contributions using the almost-orthogonality of the $(\vpi)_i$:
\begin{align*}
\|P_{\leq M_0}\vpi f_i\|_{\ell^2_iH^s_x}\lesssim \langle \mo\rangle^s\|\vpi f_i\|_{\ell^2_iL^2_x}\lesssim \langle \mo\rangle^s\|\sum_i\vpi f_i\|_{L^2}
\end{align*}

For the high frequency contributions we have to introduce the commutators $[P_M,\vpi]$. We have
\begin{align*}
\|P_{>\mo}\vpi f_i\|_{\ell^2_i H^s_x}\lesssim&\| M^s P_M \vpi f_i\|_{\ell^2_{M>\mo} \ell^2_i  L^2_x}\\
\lesssim&\|  M^s \vpi P_M f_i\|_{\ell^2_{M>\mo} \ell^2_i  L^2_x}+\underbrace{\|  M^s [P_M,\vpi] f_i\|_{\ell^2_{M>\mo} \ell^2_i  L^2_x}}_{(A)}\\
\lesssim&\|  M^s \sum_i \vpi P_M f_i\|_{\ell^2_{M>\mo}  L^2_x}+(A)\\
\lesssim&\|P_{>\mo} \sum_i \vpi f_i\|_{H^s_x}+\underbrace{\| M^s \sum_i [\vpi,P_M] f_i\|_{\ell^2_{M>\mo}  L^2_x}}_{(B)}+(A)\\
\lesssim&\|\sum_i \vpi f_i\|_{H^s_x}+(A)+(B)
\end{align*}
so we must bound (A) and (B).

\sloppy Let's study (A) first. Since the $(X_i)_i$ take values in $\{\pm1\}$ we have that $|[P_M,\vpi]f_i|=|[P_M,\vpi]f|$ so
\begin{multline}\label{huh}
(A)=\| M^s [P_M,\vpi] f\|_{\ell^2_{M>\mo} \ell^2_i  L^2_x}\\
\lesssim\| M^s [P_M,\vpi]\tvpi f\|_{\ell^2_{M>\mo} \ell^2_i  L^2_x}+\| M^s \vpi P_M \sum_{j:|j-i|> 8\sqd}\vpj f\|_{\ell^2_{M>\mo} \ell^2_i  L^2_x}
\end{multline}
for $\tvpi=\sum_{j:|j-i|\leq8\sqd}\vpj$, so that $\tvpi\vpi=1$.
To handle the first term, we will use the bound
\begin{align}\label{first_A_bound}
\|[P_M,\vpi]g\|_2\lesssim_d M^{-1}\|\na\vpi\|_{\infty}\|g\|_2\lesssim_d M^{-1}\|g\|_2
\end{align}
which follows from writing the frequency projection as a convolution operator.
For the second term we will use a slightly stronger form of Lemma 3.2 from \cite{spitz2021almost}, holding for any $D>0$, $|i-j|\geq 8\sqd$:
\begin{equation}\label{second_A_bound}
\|\vpi P_M\vpj g\|_2\lesssim_{d,D} M^{-D}|i-j|^{-D}\|\vpj g\|_2
\end{equation}

With these results in hand, we are able to bound (A). 
Using \eqref{first_A_bound} on the first term and the triangle inequality followed by \eqref{second_A_bound} on the second term we obtain
\begin{align*}
(A)&\lesssim\| M^{s-1} \tvpi f\|_{\ell^2_{M>\mo} \ell^2_i  L^2_x}+\| M^{s-D}|i-j|^{-D} \vpj f\|_{\ell^2_{M>\mo} \ell^2_i \ell^1_{j:|j-i|>8\sqd} L^2_x}\\
\lesssim& \mo^{s-1}\|f\|_2+\mo^{s-D}\left\| \||i-j|^{-D/2}\|_{\ell^2_{j:|j-i|>8\sqd}}\cdot\||i-j|^{-D/2} \vpj f\|_{\ell^2_{j:|j-i|>8\sqd}L^2_x}\right\|_{\ell^2_i}
\end{align*}
by the Cauchy-Schwarz inequality. For $D$ sufficiently large, $\||i-j|^{-D/2}\|_{\ell^2_{j:|j-i|>8\sqd}}$ is finite and we may swap the sums over $i$ and $j$ in the second term to obtain
\begin{align*}
(A)\lesssim&\mo^{s-1}\|f\|_2+\mo^{s-D}\||i-j|^{-D/2} \vpj f\|_{\ell^2_j \ell^2_{i:|i-j|>8\sqd}L^2_x}\\
\lesssim&\mo^{s-1}\|f\|_{L^2(\rd)}
\end{align*}

We now turn to
$$
(B)=\|M^s\sum_i[P_M,\vpi]f_i\|_{\ell^2_{M>\mo}L^2_x}
$$

This time write
$$
[P_M,\vpi]=\tvpi[P_M,\vpi]+(1-\tvpi)P_M\vpi
$$
We have
\begin{align*}
\|M^s\sum_i\tvpi[P_M,\vpi]f_i\|_{\ell^2_{M>\mo}L^2_x}
\leq&\|M^s\tvpi[P_M,\vpi]f_i\|_{\ell^2_{M>\mo} \ell^2_i L^2_x}\leq(A)\lesssim\mo^{s-1}\|f\|_2
\end{align*}
since the $\tvpi$ are bounded. On the other hand, using that $f_i=\xw f$, we have
\begin{align*}
\|M^s\sum_i(1-\tvpi)P_M\vpi f_i\|_{\ell^2_{M>\mo} L^2_x}
=&\|M^s\sum_i\sum_{j:|j-i|>8\sqd}\vpj P_M\vpi f\|_{\ell^2_{M>\mo} L^2_x}\\
=&\|\sum_j\vpj \cdot M^sP_M(\sum_{i:|i-j|>8\sqd}\vpi) f\|_{\ell^2_{M>\mo} L^2_x}\\
\lesssim&\|\vpj \cdot M^sP_M(\sum_{i:|i-j|>8\sqd}\vpi) f\|_{\ell^2_{M>\mo} \ell^2_j L^2_x}
\end{align*}
by the almost-orthogonality of the projections $\vpj$. Using the triangle inequality followed by the estimate \eqref{second_A_bound}, we can bound the previous line by
\begin{align*}
&\|M^{s-D}|i-j|^{-D} \vpi f\|_{\ell^2_{M>\mo}\ell^2_j \ell^1_{i:|i-j|>8\sqd}L^2_x}\\
\lesssim&\mo^{s-D}\||i-j|^{-D} \vpi f\|_{\ell^2_j \ell^1_{i:|i-j|>8\sqd}L^2_x}\\
\lesssim&\mo^{s-D}\|\||i-j|^{-D/2}\|_{\ell^2_{i:|i-j|>8\sqd}} \cdot\||i-j|^{-D/2} \vpi f\|_{\ell^2_{i:|i-j|>8\sqd}L^2_x} \|_{\ell^2_j}\\
\lesssim&\mo^{s-D}\||i-j|^{-D/2} \vpi f\|_{\ell^2_i \ell^2_{j:|j-i|>8\sqd}L^2_x}\lesssim\mo^{s-1}\|f\|_2
\end{align*}
since $D,\mo\geq1$. This completes the estimate for (B).

Combining the estimates we have just found with the bound on the low frequency contributions we conclude that
\begin{align*}
\|\vpi f\|_{\ell^2_i H^s_x(\rd)}\lesssim&\langle\mo\rangle^s\|\sum_i\vpi f_i\|_{L^2(\rd)}+\|\sum_i\vpi f_i\|_{H^s(\rd)}+\mo^{s-1}\|f\|_{L^2(\rd)}\\
\lesssim&(1+\langle\mo\rangle^s)\|\sum_i \xw\vpi f\|_{H^s(\rd)}+\mo^{s-1}\|\vpi f\|_{\ell^2_i L^2_x(\rd)}
\end{align*}
Now, since $\|\vpi f\|_{\ell^2_i L^2_x(\rd)}\lesssim\|\vpi f\|_{\ell^2_i H^s_x(\rd)}$ and $s<1$, if we take $\mo$ sufficiently large (depending only on $s$ and $d$) we may move this term to the left hand side to obtain
$$
\left(\sum_i\|\vpi f\|_{H^s(\rd)}^2\right)^\frac{1}{2}\lesssim_{s,d}\|\sum_i \xw\vpi f\|_{H^s(\rd)}
$$
which completes the proof of the second inequality in \eqref{double_inequality}.

  \bigskip
  \footnotesize
\textsc{B\^{A}timent des Math\'{e}matiques, EPFL, Station 8, 1015 Lausanne, Switzerland}\par\nopagebreak
\textit{E-mail address}: \texttt{katie.marsden@epfl.ch}\par\nopagebreak
This article has been published in a revised form in Communications on Pure and Applied Analysis [\url{http://dx.doi.org/10.3934/cpaa.2023106}]. This version is free to download for private research and study only. Not for redistribution, re-sale or use in derivative works.

\end{document}